\newcommand{\R}{\mathbb{R}}
\newcommand{\vol}{{\rm vol}}
\numberwithin{equation}{section}
\newtheorem{thm}{Theorem}[section]
\newtheorem{defn}[thm]{Definition}
\newtheorem{lem}[thm]{Lemma}
\newtheorem{prop}[thm]{Proposition}
\newtheorem{cor}[thm]{Corollary}
\newtheorem{remark}[thm]{Remark}
\begin{document}

\title[]{Perelman's $\lambda$-functional on manifolds with conical singularities}
\author{Xianzhe Dai}
\address{
Department of Mathematics, East China Normal University, Shanghai, China, and
University of Californai, Santa Barbara
CA93106,
USA}
\email{dai@math.ucsb.edu}

\author{Changliang Wang}
\address{Department of Mathematics and Statistics, McMaster University, Hamilton, Ontario, Canada}
\email{wangc114@math.mcmaster.ca}
\date{}

\begin{abstract}
In this paper, we prove that on a compact manifold with isolated conical singularity the spectrum of the Schr\"odinger operator $-4\Delta+R$ consists of discrete eigenvalues with finite multiplicities, if the scalar curvature $R$ satisfies a certain condition near the singularity. Moreover, we obtain an asymptotic behavior for eigenfunctions near the singularity. As a consequence of these spectral properties, we extend the theory of the Perelman's $\lambda$-functional on smooth compact manifolds to compact manifolds with isolated conical singularities.
\end{abstract}

\maketitle

\begin{section}{Introduction}
\noindent Let $(M,g)$ be a compact Riemannian manifold without boundary. We recall some Riemannian functionals introduced by G. Perelman to study Ricci flows\cite{Per02}. The $\mathcal{F}$-functional is defined by
\begin{equation}\label{F-functional}
\mathcal{F}(g,f)=\int_{M}(R_{g}+|\nabla f|)e^{-f}d\vol_{g},
\end{equation}
where $R_{g}$ is the scalar curvature of the metric $g$, and $f$ is a smooth function on $M$. Let $u=e^{-\frac{f}{2}}$, then the $\mathcal{F}$-functional becomes
\begin{equation}\label{F-functional2}
\mathcal{F}(g,u)=\int_{M}(4|\nabla u|^{2}+R_{g}u^{2})d\vol_{g}.
\end{equation}
The Perelman's $\lambda$-functional is defined by
\begin{equation}\label{lambda-functional}
\lambda(g)=\inf\{\mathcal{F}(g,u) \ \ | \ \ \int_{M}u^{2}d\vol_{g}=1\}.
\end{equation}

Clearly, from $(\ref{lambda-functional})$ and $(\ref{F-functional2})$,  $\lambda(g)$ is the smallest eigenvalue of the Schr\"odinger operator $-4\Delta_{g}+R_{g}$. Since, in this case,  the smallest eigenvalue is simple, $\lambda(g)$ and the corresponding normalized positive eigenfunction depend smoothly in $g$ (see e.g. \cite{DWW05}). Let $g(t)$, for $t\in(-\epsilon,\epsilon)$, be a smooth family of metrics with $g(0)=g_{0}$ and $\frac{d}{dt}g(t)|_{t=0}=h$. The first variation formula of the $\lambda$-functional is given by\cite{Per02}
\begin{equation}\label{FristVariationalLambdaFunctional}
\frac{d}{dt}\lambda(g(t))|_{t=0}=\int_{M}\langle-Ric_{g_{0}}-\nabla^{2}_{g_{0}}f,h\rangle_{g_{0}} e^{-f}d\vol_{g_{0}}.
\end{equation}
From the formula $(\ref{FristVariationalLambdaFunctional})$, we can see that the critical points of the $\lambda$-functional are the steady Ricci solitons, which are actually the Ricci flat metrics because the manifold is compact. Moreover, the gradient flow of the $\lambda$-functional is the Ricci flow up to diffeomorphisms, and thus the $\lambda$-functional is nondecreasing along the Ricci flow.

At a critical point of the $\lambda$-functional, i.e. a Ricci flat metric $g_{0}$, the second variational formula of the $\lambda$-functional is
\begin{equation}\label{SecondVariationalLambdaFunctional}
\frac{d^{2}}{dt^{2}}\lambda(g(t))|_{t=0}=e^{-f}\int_{M}\langle -\frac{1}{2}\Delta_{L,g_{0}}h+\delta_{g_{0}}^{*}\delta_{g_{0}} h+\frac{1}{2}\nabla^{2}_{g_{0}}(\nu_{h}),h\rangle_{g_{0}} d\vol_{g_{0}},
\end{equation}
where $\Delta_{g_{0}}\nu_{h}=-\delta_{g_{0}}(\delta_{g_{0}} h)$, $\Delta_{L,g_{0}}$ is the Lichnerowicz Laplace with respect to the metric $g_{0}$, $\delta_{g_{0}}$ is the divergence, and $\delta_{g_{0}}^{*}$ is the adjoint of $\delta_{g_{0}}$. This formula is the key to the study of stability of Ricci flat metrics and Ricci flow (see, e.g. \cite{Ses06}, \cite{Has12}, and \cite{HM14}).

Perelman also introduced and studied $W$-functional and $\mu$-entropy on smooth compact manifolds in \cite{Per02}. The second variation formula of corresponding $\nu$-entropy was derived in \cite{CHI04} and \cite{CZ12}. The $W$-functional on noncompact manifolds was studied in \cite{Zha12}.

In this paper we develop Perelman's $\lambda$-functional on a class of singular manifolds, the manifolds with isolated conical singularities. Recent advances have demonstrated the importance of singular manifolds in that they are not only important in themselves but also provide a powerful tool for the study of smooth metrics, see, \cite{CDS15} and \cite{Tian15}. Riemannian manifolds with conical singularities also naturally appear as Gromov-Hausdorff limits of smooth manifolds, and as singularities of Ricci flow. These motivate us to study Riemannian manifolds with conical singularities, in particular, the theory of Perelman's $\lambda$-functional on these manifolds.

As we have seen that the $\lambda$-functional on a compact smooth manifold is the same as the smallest eigenvalue of the Schr\"odinger operator $-4\Delta+R$. We will take this spectral viewpoint in developing Perelman's $\lambda$-functional on compact manifolds with isolated conical singularities. Spectral theory for Laplacians on manifolds with conical singularity, initiated by Cheeger\cite{Che}, has a long history and is relatively well understood. The complication here comes from the singular potential caused by the conical singularity. Therefore, we first extend the spectral theory for the Schr\"odinger operator $-4\Delta+R$ to compact Riemannian manifolds with isolated conical singularities.

Roughly speaking, by a compact Riemannian manifold with isolated conical singularities we mean a singular manifold $(M, g)$ whose singular set $S$ consists of finite many points and its regular part $(M\setminus S, g)$ is a smooth Riemannian manifold. Moreover, near the singularities, the metric is asymptotic to a (finite) metric cone $C_{(0, 1]}(N)$ where $N$ is a compact smooth Riemannian manifold with metric $h_0$ which will be called a cross section (see \S 1 for the precise definition).
Our first main result establishes the spectral theory for the Schr\"odinger operator with singular potential $-4\Delta+R$ on the manifolds with isolated conical singularities.

\begin{thm}\label{MainResult1}
Let $(M^{n}, g)$ ($n\geq 3$) be a compact Riemannian manifold with isolated conical singularity. If the scalar curvature of the cross sections at the conical singularities $R_{h_{0}}>(n-2)$ on $N$, then the operator $-4\Delta_{g}+R_{g}$ with domain $C^{\infty}_{0}(M\setminus S)$ is semi-bounded, and the spectrum of its Friedrichs extension consists of discrete eigenvalues with finite multiplicities
$\lambda_{1}<\lambda_{2}\leq\lambda_{3}\leq\cdots,$ and $\lambda_{k}\rightarrow+\infty$, as $k\rightarrow+\infty$. The corresponding eigenfunctions form a complete basis of $L^{2}(M)$
\end{thm}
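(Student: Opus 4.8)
The plan is to realize the operator through its quadratic form, establish semi-boundedness by a Hardy-type inequality adapted to the cone, and then obtain discreteness by proving that the form domain embeds compactly into $L^{2}(M)$. Throughout I work on a conical neighborhood $C_{(0,1]}(N)$, where to leading order $g=dr^{2}+r^{2}h_{0}$ and the scalar curvature satisfies $R_{g}=r^{-2}\bigl(R_{h_{0}}-(n-1)(n-2)\bigr)+(\text{lower order})$; by the precise asymptotics of \S 1 the deviation of $g$ from the exact cone and these lower-order terms are subcritical relative to $r^{-2}$, and can be absorbed at the end.

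First I would establish semi-boundedness. For $u\in C^{\infty}_{0}(M\setminus S)$, write on the cone $|\nabla u|^{2}=|\p_{r}u|^{2}+r^{-2}|\nabla_{N}u|^{2}$, with volume element $r^{n-1}\,dr\,d\vol_{h_{0}}$, and apply the radial Hardy inequality
\[
\int_{0}^{1}|\p_{r}u|^{2}\,r^{n-1}\,dr\ \geq\ \Bigl(\tfrac{n-2}{2}\Bigr)^{2}\int_{0}^{1}\frac{u^{2}}{r^{2}}\,r^{n-1}\,dr.
\]
Feeding this into $\mathcal{F}(g,u)=\int(4|\nabla u|^{2}+R_{g}u^{2})\,d\vol_{g}$ and using $4\bigl(\tfrac{n-2}{2}\bigr)^{2}=(n-2)^{2}$ together with the identity $(n-2)^{2}-(n-1)(n-2)=-(n-2)$, the coefficient of $r^{-2}u^{2}$ collapses to $R_{h_{0}}-(n-2)$. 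Hence the contribution of $C_{(0,1]}(N)$ to $\mathcal{F}(g,u)$ is bounded below by
\[
\int_{C_{(0,1]}(N)}\frac{1}{r^{2}}\Bigl(4|\nabla_{N}u|^{2}+(R_{h_{0}}-(n-2))u^{2}\Bigr)r^{n-1}\,dr\,d\vol_{h_{0}}\ \geq\ 0,
\]
precisely because $R_{h_{0}}>(n-2)$. Away from $S$ the potential $R_{g}$ is bounded, contributing at worst $-C\|u\|^{2}_{L^{2}}$, so $\mathcal{F}(g,u)\geq -C\|u\|^{2}_{L^{2}}$ and the form is semi-bounded; its Friedrichs extension is the desired self-adjoint operator.

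The main work is the compact embedding of the form domain $\mathcal{H}$ into $L^{2}(M)$. Since $N$ is compact, $\varepsilon_{0}:=\min_{N}(R_{h_{0}}-(n-2))>0$, so I would spend only a fraction of the Hardy inequality: writing $4|\p_{r}u|^{2}=4\theta|\p_{r}u|^{2}+4(1-\theta)|\p_{r}u|^{2}$ and applying Hardy to the second piece, for $\theta>0$ small the coefficient of $r^{-2}u^{2}$ stays $\geq\varepsilon_{0}/2$. This yields
\[
\mathcal{F}(g,u)+C\|u\|_{L^{2}}^{2}\ \gtrsim\ \int_{C_{(0,1]}(N)}\Bigl(|\p_{r}u|^{2}+\frac{1}{r^{2}}|\nabla_{N}u|^{2}+\frac{1}{r^{2}}u^{2}\Bigr)r^{n-1}\,dr\,d\vol_{h_{0}},
\]
so the form norm dominates the weighted quantity $\int r^{-2}u^{2}$. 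This weighted gain is exactly what produces compactness: for a bounded sequence in $\mathcal{H}$ the $L^{2}$-mass near the tip is uniformly small, since on $\{r<\delta\}$ one has $\int u^{2}r^{n-1}\,dr=\int r^{-2}u^{2}\,r^{n+1}\,dr\leq \delta^{2}\int r^{-2}u^{2}\,r^{n-1}\,dr\leq C\delta^{2}$. On the complement $\{r\geq\delta\}$, a smooth compact region, the usual Rellich--Kondrachov theorem applies. A diagonal argument over $\delta\to 0$ then extracts an $L^{2}$-convergent subsequence, proving $\mathcal{H}\hookrightarrow L^{2}(M)$ is compact.

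Finally, a semi-bounded self-adjoint operator whose form domain embeds compactly in $L^{2}$ has compact resolvent; standard spectral theory then gives a discrete spectrum $\lambda_{1}\leq\lambda_{2}\leq\cdots\to+\infty$ with finite multiplicities and a complete orthonormal eigenbasis. Simplicity of the bottom eigenvalue, $\lambda_{1}<\lambda_{2}$, follows from positivity of the ground state: since $|u|$ is also a minimizer of $\mathcal{F}$ under the $L^{2}$-constraint, the minimizer may be taken nonnegative, and elliptic regularity with the strong maximum principle on $M\setminus S$ forces it to be strictly positive, whence it is unique up to scale. I expect the principal obstacle to be the borderline nature of the potential: the $r^{-2}$ singularity of $R_{g}$ is exactly of Hardy order, so both semi-boundedness and compactness hinge on the sharp constant and on the strict, gapped inequality $R_{h_{0}}>(n-2)$; care is also required to absorb the subcritical errors arising because $g$ is only asymptotically, not exactly, conical.
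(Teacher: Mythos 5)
Your overall strategy coincides with the paper's for the core of the theorem: semi-boundedness comes from the sharp Hardy constant combined with the strict gap $R_{h_0}>(n-2)$ (the identity $4\left(\tfrac{n-2}{2}\right)^2-(n-1)(n-2)=-(n-2)$ is exactly the computation in Theorem \ref{Semi-boundednessOnCones}, where the substitution $\tilde u=r^{(n-1)/2}u$ plus the one-dimensional Hardy inequality reproduces your $n$-dimensional radial Hardy inequality), and your device of withholding a fraction $\theta$ of the gradient so that the form controls $\int(|\nabla u|^2+r^{-2}u^2)$ is the paper's $\delta_0$-splitting $L=L_{\delta_0}-\delta_0\Delta+\delta_0 r^{-2}$. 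Two of your steps do take a genuinely different, and legitimate, route. For the compact embedding you use tightness, $\int_{\{r<\delta\}}u^2\leq\delta^2\int r^{-2}u^2$, plus Rellich on $\{r\geq\delta\}$ and a diagonal argument, whereas the paper conjugates by the unitary $u\mapsto r^{(n-1)/2}u$ onto a finite cylinder and applies Rellich there (Lemma \ref{compactness on cones}); your version is more elementary and avoids the mode-by-mode expansion. For the simplicity $\lambda_1<\lambda_2$ you use positivity of the ground state and the strong maximum principle on $M\setminus S$, while the paper develops a min-max principle and invokes Courant's nodal domain theorem (Theorem \ref{Min-maxPrinciple} and its corollary); both are standard, though yours quietly uses that $|u|$ remains in the form domain with the same energy and that minimizers of the Rayleigh quotient over $H^{1}(M)$ are eigenfunctions.

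One step as written is false and needs repair. The radial Hardy inequality on the finite cone $(0,1]$ with the sharp constant $\left(\tfrac{n-2}{2}\right)^2$ fails for functions that do not vanish at the outer boundary $r=1$: for $u\equiv1$ the left-hand side is $0$ while the right-hand side equals $\tfrac{n-2}{4}>0$. Your test functions $u\in C^{\infty}_{0}(M\setminus S)$ are in general nonzero at $r=1$, and the integration by parts behind the inequality produces a boundary term at $r=1$ of the unfavorable sign. The fix is routine: either insert a cutoff so that the piece of $u$ to which Hardy is applied is supported in the cone (this is precisely the role of the partition of unity and the cross-term estimates in Theorem \ref{Semi-boundedness}), or retain the trace term and bound it by $C\int_{\{1/2<r<1\}}(|\nabla u|^2+u^2)$. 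Either way the cost is a multiple of $\|u\|^{2}_{L^{2}}$, harmless for semi-boundedness, but since the whole argument rides on the sharp constant this bookkeeping must actually be carried out. The same remark applies to your absorption of the $o(r^{\alpha-2})$ errors coming from $h_r-h_0$: the plan is sound precisely because the gap $\varepsilon_0=\min_N(R_{h_0}-(n-2))$ is strict, and the $\beta(n)$-comparison in Theorem \ref{Semi-boundedness} is where the paper makes this quantitative.
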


\begin{remark}
	For $2$-dimensional manifolds with conical singularities, the curvature is flat for the model cone and no assumption is needed in this case. The analysis and result essentially follow from Cheeger \cite{Che}.
\end{remark}

\begin{remark}
	Similarly the result extends to the Schr\"odinger operator $-\Delta_{g}+ c R_{g}$ for constant $c$, where the scalar curvature condition becomes $R_{h_{0}}> \frac{(n-1)[4c(n-2)-(n-3)]-1}{4c}$. For example, for the well known conformal Laplacian $-\Delta_{g}+ \frac{n-2}{4(n-1)} R_{g}$, the scalar curvature condition becomes $R_{h_{0}}> 0$.
\end{remark}

Thus on a compact manifold with isolated conical singularities and dimension $\geq 3$, we need the geometric condition $R_{h_{0}}>(n-2)$ to guarantee that the Friedrichs extension of the Schr\"odinger operator $-4\Delta_{g}+R_{g}$ exists and its spectrum behaves nicely as in the smooth case. This may be somewhat surprising and we make some further comments below.

Let $C(N,h_{0})=(\mathbb{R}_{+}\times N,g=dr^{2}+r^{2}h_{0})$ be the Riemannian cone over a compact (n-1)-dimensional smooth Riemannian manifold $(N^{n-1},h_{0})$. Then the Laplcian and the scalar curvature on the cone are given by
\begin{eqnarray}
\Delta_{g} &=& \partial^{2}_{r}+\frac{n-1}{r}\partial_{r}+\frac{1}{r^{2}}\Delta_{h_{0}},\label{Lapace_on_cone}\\
R_{g} &=& \frac{1}{r^{2}}[R_{h_{0}}-(n-1)(n-2)].\label{Scale_curvature_on_cone}
\end{eqnarray}

Thus the operator $-4\Delta+R$ is a Schr\"{o}dinger operator with singular potential $R$, which behaves like $O(\frac{1}{r^{2}})$ as $r\rightarrow0$. This type of operators have been studied in several literatures, see e.g. \cite{BS87} and \cite{RS2}. The simplest example of this type of operators is mentioned in \cite{BS87}. That is, the unbounded operator $L_{a}=-\frac{d^{2}}{dx^{2}}+\frac{a}{x^{2}}$ in $L^{2}(\mathbb{R}_{+})$ with the domain $D(L_{a})=C^{\infty}_{0}(\mathbb{R}_{+})$, where $a\in\mathbb{R}$ is a constant and $\mathbb{R}_{+}=(0,+\infty)$. By using Hardy's inequality, one can easily see that the operator $L_{a}$ is non-negative if and only if $a\geq-\frac{1}{4}$.

Moreover, in \cite{RS2}, M. Read and B. Simon studied the Schr\"{o}dinger operator with a spherically symmetric potential: $-\Delta+V(r)$ on $\R^{n}$, where $r=(\sum^{n}_{i=1}x^{2}_{i})^{\frac{1}{2}}$. They proved that if $V(r)+\frac{(n-1)(n-3)}{4}\frac{1}{r^{2}}\geq\frac{3}{4r^{2}}$,
then $-\Delta+V(r)$ is essentially self-adjoint on $C^{\infty}_{0}(\R^{n}\setminus\{0\})$, and if $V(r)$ satisfies $0\leq V(r)+\frac{(n-1)(n-3)}{4}\frac{1}{r^{2}}\leq\frac{c}{r^{2}}, \ \ \ c<\frac{3}{4}$, then $-\Delta+V(r)$ is not essentially self-adjoint on $C^{\infty}_{0}(\R^{n}\setminus\{0\})$ (see Theorem \uppercase\expandafter{\romannumeral10}.11 in \cite{RS2}).

We should also point out that in \cite{BP03}, B. Botvinnik and S. Preston proved that the spectrum of the conformal Laplacian on a compact Riemannian manifold with isolated tame conical singularities consists of discrete eigenvalues with finite multiplicities. The conformal Laplacian $-\Delta+\frac{n-2}{4(n-1)}R$ is a similar singular Schr\"{o}dinger operator. A tame conical singularity is given as a cone over a product of the standard spheres. Therefore, the scalar curvature of the cross section of a tame conical singularity clearly satisfies the condition in Theorem $\ref{MainResult1}$ (see the Remark after), and Theorem \ref{MainResult1} gives more general result in this case.

The general idea of the proof of Theorem $\ref{MainResult1}$ is similar to the one in \cite{BP03}. We use certain weighted Sobolev spaces that can be compactly embedded in $L^{2}(M)$. And by doing some estimates, we show that the operator $-4\Delta+R$ is semi-bounded and the domain of its self-adjoint Friedrichs extension is in a weighted Sobolev space. Then we can use the spectrum theorem for self-adjoint compact operators to obtain the property of the spectrum of the operator $-4\Delta+R$.

Theorem $\ref{MainResult1}$ enables us to define the $\lambda$-functional on manifolds with isolated conical singularities, as the smallest eigenvalue of $-4\Delta+R$. However, for deriving variational formulae of the the $\lambda$-functional, certain asymptotic behavior of eigenfunctions near singularities is necessary. This is our second main result in this paper.

\begin{thm}\label{MainResult2}
Let $(M^{n},g,p)$ be a compact Riemannian manifold with (for simplicity) a single conical point $p$ satisfying $R_{h_{0}}>(n-2)$ and
\begin{equation}\label{AsymptoticOfMetric2}
\begin{aligned}
&\ \ \ \ \  r^{i}|\nabla^{i+1}(h_{r}-h_{0})|\leq C_{i}<+\infty, \\
&\textit{for some constant} \ \ C_{i}, \ \ \textit{and each} \ \ 0\leq i\leq\frac{n}{2}+2,
\end{aligned}
\end{equation}
near $p$. Then eigenfunctions of $-4\Delta_{g}+R_{g}$ satisfy
\begin{equation}\label{AsymptoticOrder1}
|\nabla^{i}u|=o(r^{-\frac{n-2}{2}-i}), \ \ \ \text{as} \ \ r\rightarrow0,
\end{equation}
for $i=0$ and $1$.

Moreover, if the singularity is cone-like, eigenfunctions have asymptotic expansion at the conical singularity $p$ as
\begin{equation}\label{AsymptoticExpansion1}
u\sim \sum^{+\infty}_{j=1}\sum^{+\infty}_{l=0}\sum^{p_{j}}_{p=0}r^{s_{j}+l}(\ln r)^{p}u_{j,l,p},
\end{equation}
where $u_{j,l,p}\in C^{\infty}(N^{n-1})$, $p_{j}=0$ or $1$, and $s_{j}=-\frac{n-2}{2}\pm\frac{\sqrt{\mu_{j}-(n-2)}}{2}$, where $\mu_{j}$ are eigenvalues of $-\Delta_{h_{0}}+R_{h_{0}}$ on $N^{n-1}$.
\end{thm}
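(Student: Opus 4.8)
The plan is to treat the two assertions separately, in both cases reducing to the model cone near $p$ and exploiting that an eigenfunction $u$ lies in the form domain of the Friedrichs extension. The starting point for the decay estimate $(\ref{AsymptoticOrder1})$ is the coercivity already underlying Theorem \ref{MainResult1}. On the cone the radial Hardy inequality $\int(\partial_r u)^2 r^{n-1}\,dr\ge\tfrac{(n-2)^2}{4}\int u^2 r^{n-3}\,dr$, combined with $R_g=r^{-2}(R_{h_0}-(n-1)(n-2))$, shows that $Q(u)=\int 4|\nabla u|^2+R_g u^2$ dominates $(\min_N R_{h_0}-(n-2))\int r^{-2}u^2$, a strictly positive multiple of $\int r^{-2}u^2$ precisely because $R_{h_0}>(n-2)$. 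Hence every eigenfunction satisfies $\int_M(|\nabla u|^2+r^{-2}u^2)\,d\vol_g<\infty$, so the tail energy $E(\tau):=\int_{\{r<\tau\}}(|\nabla u|^2+r^{-2}u^2)\,d\vol_g\to 0$ as $\tau\to 0$.

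The decay then follows from rescaling. On $A_\tau=\{\tau/2<r<2\tau\}$ set $\sigma=r/\tau$ and $v(\sigma,x)=u(\tau\sigma,x)$. Using $(\ref{AsymptoticOfMetric2})$, the equation $(-4\Delta_g+R_g)u=\lambda u$ transforms into a second-order equation on the fixed annulus $(1/2,2)\times N$ whose coefficients converge, as $\tau\to0$, to those of the unit model cone and are bounded in $C^k$ uniformly in $\tau$ for $k$ up to roughly $n/2+1$, while the eigenvalue term becomes $\lambda\tau^2 v\to0$. Interior $L^2$-elliptic estimates with $\tau$-uniform constants give $\sup_{(3/4,3/2)\times N}(|v|+|\nabla_{g_0}v|)\le C\|v\|_{L^2(A_1,g_0)}$. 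Since $d\vol_g\approx\tau^n\,d\vol_{g_0}$ and $\int_{A_\tau}u^2\,d\vol_g\le 4\tau^2\int_{A_\tau}r^{-2}u^2\,d\vol_g\le 4\tau^2 E(2\tau)$, we get $\|v\|_{L^2(A_1)}^2\le C\tau^{2-n}E(2\tau)$. Translating back, and using $|\nabla_g u|\approx\tau^{-1}|\nabla_{g_0}v|$, yields $\sup_{A_\tau}|u|\le C\tau^{-(n-2)/2}\sqrt{E(2\tau)}$ and $\sup_{A_\tau}|\nabla u|\le C\tau^{-n/2}\sqrt{E(2\tau)}$, which are exactly $(\ref{AsymptoticOrder1})$ for $i=0,1$ since $E(2\tau)\to0$.

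For the expansion I would first record the operator's structure near a \emph{cone-like} singularity. Writing $u=\sum_j a_j(r)\phi_j(x)$ in an $L^2(N)$-orthonormal basis of eigenfunctions $\phi_j\in C^\infty(N)$ of the cross-sectional operator $-4\Delta_{h_0}+R_{h_0}$ with eigenvalues $\mu_j$, on the exact cone each $a_j$ solves $a_j''+\tfrac{n-1}{r}a_j'-\tfrac{\mu_j-(n-1)(n-2)}{4r^2}a_j=-\tfrac{\lambda}{4}a_j$, a Bessel-type equation with indicial equation $s^2+(n-2)s=\tfrac{\mu_j-(n-1)(n-2)}{4}$, hence indicial roots $s_j=-\tfrac{n-2}{2}\pm\tfrac{\sqrt{\mu_j-(n-2)}}{2}$ (real because $R_{h_0}>(n-2)$ forces $\mu_j>(n-2)$). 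The Frobenius/Bessel solutions behave like $r^{s_j}(1+O(r^2))$; for a cone-like metric the perturbation $h_r-h_0$ supplies the intermediate integer powers $r^{s_j+l}$, and collisions of exponents $s_j+l$ differing by integers produce the $(\ln r)^p$ terms with $p_j\in\{0,1\}$.

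To make this rigorous I would treat $-4\Delta_g+R_g-\lambda$ as an elliptic Fuchsian (cone) operator with indicial family $I(s)=-4(s^2+(n-2)s)+(-4\Delta_{h_0}+R_{h_0})-(n-1)(n-2)$ and apply the Mellin-transform/$b$-calculus machinery (in the spirit of Cheeger, Br\"uning--Seeley and Mooers): the Mellin transform of $u$ is meromorphic with poles exactly at the indicial roots shifted by non-negative integers, their orders (at most two, by simplicity of the resonances) giving the powers of $\ln r$, and shifting the inversion contour produces $(\ref{AsymptoticExpansion1})$ with $u_{j,l,p}\in C^\infty(N)$; the decay from the first part shows that only roots with $\mathrm{Re}\,s_j\ge-\tfrac{n-2}{2}$ carry nonzero leading coefficients. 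The main obstacle is this second part: controlling the deviation of $h_r$ from $h_0$ so that the formal series is genuinely asymptotic, and tracking the resonances where exponents coincide to confirm that logarithms occur only to first order. The technical point in the first part is comparatively routine --- extracting $\tau$-uniform elliptic constants from the finite-order asymptotics $(\ref{AsymptoticOfMetric2})$, which is exactly why derivatives up to order $\sim n/2+2$ are assumed.
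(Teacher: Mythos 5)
Your proof of the decay estimate \eqref{AsymptoticOrder1} is correct and is essentially the paper's argument: the paper also starts from $u\in H^{1}(M)$ (via $Dom(\widetilde{L})\subset H^{1}$, which comes from the same Hardy-inequality coercivity you invoke), rescales dyadic annuli $((\tfrac12)^{j+1}\epsilon,(\tfrac12)^{j}\epsilon)\times N$ to a fixed annulus, and combines $\tau$-uniform elliptic estimates with Sobolev embedding (Lemmas \ref{SobolevInequality}--\ref{AsymptoticEllipticEstimate}) to convert the vanishing tail of the weighted $H^{1}$ norm into the pointwise $o(r^{-\frac{n-2}{2}-i})$ bound; your bookkeeping of the powers of $\tau$ checks out, and your explanation of why derivatives of $h_r-h_0$ up to order $\sim n/2+2$ are needed matches the paper's use of them.

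For the expansion \eqref{AsymptoticExpansion1} your setup (separation of variables, indicial roots $s_j=-\tfrac{n-2}{2}\pm\tfrac{\sqrt{\mu_j-(n-2)}}{2}$, logs from resonant roots) is right, but there are two problems. First, a misconception: you say the intermediate powers $r^{s_j+l}$ come from the perturbation $h_r-h_0$ and that ``the main obstacle is controlling the deviation of $h_r$ from $h_0$.'' In the cone-like case $h_r\equiv h_0$ near $p$ by definition, so there is no deviation; the integer shifts come from the $-\tfrac{\lambda}{4}a_j$ term on the right of the radial ODE (the Bessel/confluent-hypergeometric series in $r^2$), and the logarithms from the case where $1+\sqrt{\mu_j-(n-2)}$ is a positive integer. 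The paper simply solves the ODEs explicitly in the three cases $\lambda=0,>0,<0$ (Euler, Bessel, Kummer). Second, the real analytic work in the paper --- which your proposal does not engage with except by appeal to Mellin/$b$-calculus as a black box --- is proving that the mode sum $\sum_j u_j(r)\psi_j(x)$ and its derivatives converge absolutely and uniformly near $r=0$; the paper does this via Parseval's identity at a fixed radius $r_0$, the polynomial bound $\|\psi_j\|_{L^\infty}\le C(1+|\mu_j|)^n$ from elliptic regularity, and the rapid decay in $j$ of $r^{s_j}$ for $r<r_0/2$. The $b$-calculus route would work and is more general, but as written your proposal misplaces the difficulty and leaves the step that actually justifies the asymptotic expansion unaddressed.
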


\begin{remark}
After we finish this paper, we notice the very recent paper of T. Ozuch \cite{Tri17}, in which he studies Perelman's functionals on cones from a different viewpoint. And reuslts of his paper partially overlap with some of the results in this paper. 
More interestingly he uses these results to study Ricci flows coming out of a cone.
\end{remark}

On a manifold with a cone-like singularity, a small neighborhood of the singularity is a finite exact cone over a compact smooth manifold. In this neighborhood, we can separate variables and explicitly solve the eigenfunction equations in term of eigenfunctions on the cross section of the cone and some hypergeometric functions. By using classical elliptic estimates and some estimates for the hypergeometric functions, we then obtain the asymptotic expansion $(\ref{AsymptoticExpansion1})$ of eigenfunctions on manifold with a cone-like singularity.

On a manifold with a conical singularity, we cannot do explicit calculations. Therefore, instead, we establish some estimates to obtain an asymptotic order near the singularity for eigenfunctions in $(\ref{AsymptoticOrder1})$. We first work on small finite cones, on which we can obtain some weighted Sobelov inequalities and weighted elliptic estimates by using scaling technique. The asymptotic condition $(\ref{AsymptoticOfMetric2})$ for the asymptotically conical metric implies weighted Sobelov norms and weighted $C^{k}$-norms with respect to exactly conical metric $dr^{2}+r^{2}h_{0}$ are equivalent to ones with respect to asymptotically conical metric $dr^{2}+r^{2}h_{r}$. Then these weighted Sobelov inequality and weighted elliptic estimates still hold on an asymptotic finite cone. This implies the asymptotic behavior of the eigenfunctions in $(\ref{AsymptoticOrder1})$ by using elliptic bootstrapping. And finally, we obtain variation formulae of $\lambda$-functional on compact manifolds with a single conical singularity with the help of the asymptotic information. Then, from the first variaiton formula, we obtain the monotonicity of the $\lambda$-functional along the ricci flow preserving conical singularities. And this monotonicity property of the $\lambda$-functional implies the no steady or expanding breather result for the ricci flow preserving conical singularities.

The $W$-functional functional and $\mu$-entropy on compact manifolds with isolated coincal singularities will be discussed in a subsequent paper.
\end{section}


\begin{section}{Manifolds with Isolated Conical Singularities}

As mentioned in the introduction, roughly speaking, a compact Riemannian manifold with isolated conical singularities is a singular manifold $(M, g)$ whose singular set $S$ consists of finite many points and its regular part $(M\setminus S, g)$ is a smooth Riemannian manifold. Moreover, near the singularities, the metric is asymptotic to a (finite) metric cone $C_{(0, 1]}(N)$ where $N$ is a compact smooth Riemannian manifold with metric $h_0$. More precisely,

\noindent  \begin{defn}\label{ManifoldWithConicalSingularities}
We say $(M^{n},d,g,p_{1},\cdots,p_{k})$ is a compact Riemannian manifold with isolated conical singularities at $p_{1},\cdots,p_{k}$, if
\begin{itemize}
\item $(M,d)$ is a compact metric space,
\item $(M_{0},g|_{M_{0}})$ is an n-dimensional smooth Riemannian manifold, and the Riemannian metric $g$ induces the given metric $d$ on $M_{0}$, where $M_{0}=M\setminus\{p_{1},\cdots,p_{k}\}$,
\item for each singularity $p_{i}$, $1\leq i\leq k$, their exists a neighborhood $U_{p_{i}}\subset M$ of $p_{i}$ such that $U_{p_{i}}\cap \{p_{1},\cdots, p_{k}\}=\{p_{i}\}$, $(U_{p_{i}}\setminus\{p_{i}\},g|_{U_{p_{i}}\setminus\{p_{i}\}})$ is isometric to $((0,\varepsilon_{i})\times N_{i},dr^{2}+r^{2}h_{r})$ for some $\varepsilon_{i}>0$ and a compact smooth manifold $N_{i}$, where $r$ is a coordinate on $(0, \varepsilon_{i})$ and $h_{r}$ is a smooth family of Riemannian metrics on $N_{i}$ satisfying $h_{r}=h_{0}+o(r^{\alpha_{i}})$ as $r\rightarrow 0$, where $\alpha_{i}>0$ and $h_{0}$ is a smooth Riemannian metric on $N_{i}$.
\end{itemize}
Moreover, we say a singularity $p$ is a cone-like singularity, if the metric $g$ on a neighborhood of $p$ is isometric to $dr^{2}+r^{2}h_{0}$ for some fixed metric $h_{0}$ on the cross section $N$.
\end{defn}

\begin{remark}
In the rest of this paper, we will only work on manifolds with a single conical point as there is no essential difference between the case of a single singular point  and that of multiple isolated singularities. And all our work and results for manifolds with a single conical point go through for manifolds with isolated conical singularities.
\end{remark}

The systematic study of manifolds with conical singularities began with the seminal work of Cheeger \cite{Che} and continued through a series of work of Cheeger and other people.  Most of the work concerns the Hodge-Laplacian and the analysis is done on the smooth part, using $L^2$ spaces. Since the Hodge-Laplacian is nonnegative, it has a natural self-adjoint extension, the Friedrichs extension. They are usually not the only self-adjoint extensions however, as the so called ideal boundary conditions of Cheeger give more subtle ones.

In our case, as usual, one does analysis away from the singular set. However, the existence of Friedrichs extension is much more subtle, and to control the behavior near the singularity, we employ weighted Sobolev spaces, defined in the next section.

\end{section}


\begin{section}{Weighted Sobolev Spaces}

\noindent In this section, we define certain weighted Sobolev spaces on compact Riemannian manifolds with conical singularities and establish the compact embedding property for the weighted Sobolev spaces.

Sobolev spaces are one of the basic tools to study elliptic operators on compact manifolds. However, usual Sobolev spaces and relevant results cannot be directly applied to elliptic operators on non-compact manifolds. Instead, weighted Sobolev spaces on complete non-compact manifolds, especially on $\R^{n}$ and asymptotically flat manifolds, have been extensively studied and used to study the Laplacian and other elliptic operators on non-compact manifolds, see, e.g. \cite{Bar86}, \cite{Can81}, \cite{CBC81}, \cite{CSCB78}, \cite{Loc81}, \cite{LP87} \cite{LM85}, \cite{McO79}, and \cite{NW73}. Weighted Sobolev spaces on various interesting domains in $\R^{n}$ have been intensively studied as well and used to study partial differential equations on these domains, see, e.g. \cite{KMR97}, \cite{Kuf85}, \cite{Tri78}, and \cite{Tur00}.

On $\R^{n}$ and asymptotically flat manifolds, the distance function from a fixed point gives a weight function in weighted norms. On a cone, the radial coordinate (the distance function from the tip) naturally provides a weight function. By using this weight function near the tip, B. Botvinnik and S. Preston introduced certain weighted Sobolev spaces on manifolds with tame conical singularities to study the conformal Laplacian on these singular manifolds in \cite{BP03}. Inspired by their work, we define similar weighted Sobolev spaces on manifolds with isolated conical singularities. Here we adopt weight indices as in \cite{Bar86} so that scaling technique can be conveniently used in $\S 8$.

Let $(M^{n},g,p)$ be a compact Riemannian manifold with a single conical singularity at $p$, and $U_{p}$ be a conical neighborhood of $p$ such that $(U_{p}\setminus \{p\},g|_{U_{p}\setminus \{p\}})$ is isometric to $((0,\epsilon)\times N, dr^{2}+r^{2}h_{r})$. For each $k\in\mathbb{N}$ and $\delta\in\mathbb{R}$, we define the weighted Sobolev space $H^{k}_{\delta}(M)$ to be the completion of $C^{\infty}_{0}(M\setminus\{p\})$ with respect to the weighted Sobolev norm
\begin{equation}\label{WSN}
\|u\|^{2}_{H^{k}_{\delta}(M)}=\int_{M}(\sum^{k}_{i=0}\chi^{2(\delta-i)+n}|\nabla^{i}u|^{2})d\vol_{g},
\end{equation}
where $\nabla^{i}u$ denotes the $i$-times covariant derivative of the function $u$, and $\chi\in C^{\infty}(M\setminus\{p\})$ is a positive weight function satisfying
\begin{equation}\label{WeightFunction}
\chi(q)=
\begin{cases} 1 & \text{if} \ \ q\in M\setminus U_{p}, \\ \frac{1}{r} & \text{if} \ \  r=dist(q,p)<\frac{\epsilon}{4},
\end{cases}
\end{equation}
and $0<(\chi(q))^{-1}\leq 1$ for all $q\in M\setminus\{p\}$.

For the simplicity of notations, we set $H^{k}(M)\equiv H^{k}_{k-\frac{n}{2}}(M)$. Then 
\begin{equation}\label{WSN1}
\|u\|^{2}_{H^{k}(M)}=\int_{M}(\sum^{k}_{i=0}\chi^{2(k-i)}|\nabla^{i}u|^{2})d\vol_{g}.
\end{equation}

We have the following compact embedding of $H^{k}(M)$ into $L^{2}(M)$.
\begin{thm}\label{compactness on manifolds}
The continuous embedding
\begin{equation}\label{CompactEmbedding}
i:H^{k}(M)\hookrightarrow L^{2}(M)
\end{equation}
is compact for each $k\in\mathbb{N}$.
\end{thm}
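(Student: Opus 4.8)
The plan is to combine the classical Rellich--Kondrachov theorem on the smooth part of $M$ with a tightness estimate near the singularity that is built directly into the weight $\chi$. First I would record the two elementary consequences of the normalization $0<\chi^{-1}\le 1$, i.e. $\chi\ge 1$ everywhere. On the one hand $\chi^{2(k-i)}\ge 1$ for $0\le i\le k$, so the $i=0$ term alone gives $\|u\|_{L^2(M)}\le\|u\|_{H^k(M)}$, which already yields that $i$ is a continuous embedding. On the other hand, the crucial observation goes the opposite way: on the collar $\{r<\rho\}$ with $\rho<\frac{\epsilon}{4}$ we have $\chi=1/r>1/\rho$, hence $u^2\le \rho^{2k}\chi^{2k}u^2$ pointwise, and therefore
\[
\int_{\{r<\rho\}}u^2\,d\vol_g\;\le\;\rho^{2k}\int_{\{r<\rho\}}\chi^{2k}u^2\,d\vol_g\;\le\;\rho^{2k}\,\|u\|^2_{H^k(M)}.
\]
Thus a bounded subset of $H^k(M)$ carries uniformly small $L^2$-mass in a small neighborhood of $p$; this is the ``no mass escaping into the singularity'' tightness that compensates for the lack of compactness at the tip.

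Next, for a fixed $\rho\in(0,\frac{\epsilon}{4})$ set $M_\rho=M\setminus\{q:\mathrm{dist}(q,p)\le\rho\}$, a compact smooth manifold with boundary. On $M_\rho$ the weight $\chi$ is smooth, positive, and bounded above and below, so the $H^k(M)$-norm restricted to $M_\rho$ is equivalent to the ordinary Sobolev $H^k$-norm there. Consequently, for $k\ge 1$, the classical Rellich--Kondrachov theorem shows that the restriction map $H^k(M)\to L^2(M_\rho)$ is compact.

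I would then assemble these pieces by a diagonal argument. Given a sequence $\{u_m\}$ with $\|u_m\|_{H^k(M)}\le C$, choose $\rho_j\downarrow 0$ with all $\rho_j<\frac{\epsilon}{4}$, so that the domains $M_{\rho_j}$ increase to $M_0$. Using the compactness of $H^k(M)\to L^2(M_{\rho_j})$, extract nested subsequences converging in $L^2(M_{\rho_j})$ for each $j$, and pass to the diagonal subsequence $\{v_m\}$, which converges in $L^2(M_{\rho_j})$ for every $j$. To finish I would show $\{v_m\}$ is Cauchy in $L^2(M)$ by splitting, for each $j$,
\[
\|v_m-v_{m'}\|^2_{L^2(M)}=\|v_m-v_{m'}\|^2_{L^2(M_{\rho_j})}+\|v_m-v_{m'}\|^2_{L^2(\{r\le\rho_j\})},
\]
where the second term is bounded by $\rho_j^{2k}\,\|v_m-v_{m'}\|^2_{H^k(M)}\le \rho_j^{2k}(2C)^2$ via the tail estimate, uniformly in $m,m'$. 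Given $\eta>0$, first fix $j$ so that $\rho_j^{2k}(2C)^2<\eta^2/2$, and then take $m,m'$ large so that the $L^2(M_{\rho_j})$-term is $<\eta^2/2$; this proves convergence in $L^2(M)$.

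I do not expect a serious obstacle: the only points requiring care are the equivalence of the weighted and unweighted norms on $M_\rho$ (needed so that Rellich applies) and making the two-region splitting quantitative, both of which follow immediately from the explicit form of $\chi$. The genuinely essential ingredient is the tail estimate, where the weight exponent $2k$ on the zeroth-order term produces exactly the factor $\rho^{2k}\to 0$ that supplies the tightness; everything else is the standard cutoff-plus-diagonal packaging of compactness.
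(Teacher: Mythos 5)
Your proof is correct, but it follows a genuinely different route from the paper's. The paper reduces to the exact model cone (using $h_r=h_0+o(r^\alpha)$ to compare $g$ with $g_0=dr^2+r^2h_0$ on a small collar), then conjugates by the unitary map $u\mapsto\widetilde{u}=r^{\frac{n-1}{2}}u$ onto the finite cylinder $((0,\epsilon)\times N,\,dr^2+h_0)$, and shows by separation of variables and the one-dimensional Hardy inequality that $\|u\|^2_{H^1(C_\epsilon(N))}\geq\frac{3}{4}\min\{1,\epsilon^{-2}\}\|\widetilde{u}\|^2_{W^{1,2}}$ --- the point being that the coefficient $1+\frac{(n-1)(n-3)}{4}$ produced by the change of variables is at least $\frac{3}{4}>0$; compactness is then inherited from the classical Rellich lemma on the cylinder. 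You instead extract compactness from an exhaustion-plus-tightness argument: the zeroth-order term $\int_M\chi^{2k}u^2$ of the weighted norm alone yields the quantitative tail bound $\int_{\{r<\rho\}}u^2\leq\rho^{2k}\|u\|^2_{H^k(M)}$ (valid precisely because $k\geq1$; for $k=0$ the embedding is the identity and the statement would fail), while the interior Rellich--Kondrachov theorem applies on each $M_\rho$ where $\chi$ is bounded, and a diagonal subsequence is then Cauchy in $L^2(M)$. Your argument is more elementary and more robust: it uses no structure of the cone metric near $p$, no comparison with the exact cone, and no Hardy-type inequality, and it would work verbatim for any weight blowing up at the singularity. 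What the paper's computation buys is reusability --- the cone-to-cylinder conjugation and the constant $1+\frac{(n-1)(n-3)}{4}$ are exactly the ingredients recycled in the semiboundedness estimate of Theorem \ref{Semi-boundednessOnCones}, so the authors set up that machinery here. The only points to make explicit in your write-up are that just one direction of the norm comparison on $M_\rho$ is needed (namely $\|u\|_{W^{k,2}(M_\rho)}\leq C(\rho)\|u\|_{H^k(M)}$), and that Rellich for $W^{1,2}(M_\rho)\hookrightarrow L^2(M_\rho)$ without a zero boundary condition is legitimate because $\partial M_\rho\cong N$ is smooth; both are immediate.
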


Clearly the key here is to prove the analogous compact embedding result on the model finite cones. Let $(C_{\epsilon}(N),g)=((0,\epsilon)\times N, dr^{2}+r^{2}h)$ be a finite cone, where $h$ is a Riemannian metric on compact manifold $N$. The weighted Sobolev  norm then becomes 
\begin{equation}\label{WSNFC}
\|u\|^{2}_{H^{k}(C_{\epsilon}(N))}=\int_{C_{\epsilon}(N)}(\sum^{k}_{i=0}\frac{1}{r^{2(k-i)}}|\nabla^{i}u|^{2})d\vol_{g}.
\end{equation}

We first establish the following
\begin{lem}\label{compactness on cones}
The continuous embedding
$$i:H^{k}(C_{\epsilon}(N))\hookrightarrow L^{2}(C_{\epsilon}(N))$$
is compact for each $k\in\mathbb{N}$.
\end{lem}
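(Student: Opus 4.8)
The plan is to split the finite cone into a region bounded away from the tip, where the weight function $1/r$ is bounded and ordinary elliptic compactness applies, and a small neighborhood of the tip, where the weight itself forces the $L^2$-mass to be uniformly small. For $\delta\in(0,\epsilon)$ I write $C_\epsilon(N)=C_\delta(N)\cup A_\delta$, with $C_\delta(N)=(0,\delta)\times N$ and $A_\delta=[\delta,\epsilon)\times N$. The key observation is that the $i=0$ term in the weighted norm \eqref{WSNFC} already controls the $L^2$-mass near the tip with a factor vanishing as $\delta\to0$: since $r<\delta$ on $C_\delta(N)$,
\[
\int_{C_\delta(N)}|u|^{2}\,d\vol_{g}=\int_{C_\delta(N)}r^{2k}\cdot\frac{1}{r^{2k}}|u|^{2}\,d\vol_{g}\le\delta^{2k}\int_{C_\delta(N)}\frac{1}{r^{2k}}|u|^{2}\,d\vol_{g}\le\delta^{2k}\,\|u\|^{2}_{H^{k}(C_\epsilon(N))}.
\]
Thus for any bounded sequence in $H^{k}(C_\epsilon(N))$ the contribution of $C_\delta(N)$ to the $L^2$-norm is at most $\delta^{2k}$ times a fixed constant, uniformly in the sequence. (Here I use $k\ge1$; for $k=0$ the weighted norm is just the $L^2$-norm and the embedding is not compact.)

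On the complementary region $A_\delta$ the weight $1/r$ lies between $1/\epsilon$ and $1/\delta$, hence is bounded away from $0$ and $\infty$, so the restriction of \eqref{WSNFC} to $A_\delta$ bounds a positive multiple of the ordinary Sobolev $H^{k}$-norm on the compact manifold-with-boundary $A_\delta$ equipped with the smooth metric $dr^{2}+r^{2}h$. Consequently a bounded sequence in $H^{k}(C_\epsilon(N))$ is bounded in the standard $H^{k}(A_\delta)$, and the classical Rellich--Kondrachov theorem yields a subsequence converging in $L^{2}(A_\delta)$.

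I would then conclude by diagonalization. Fixing $\delta_j\to0$ and iterating the Rellich extraction along the nested regions $A_{\delta_1}\subset A_{\delta_2}\subset\cdots$, I obtain one subsequence $\{u_{m_l}\}$ converging in $L^{2}(A_\delta)$ for every $\delta>0$. To see it is Cauchy in $L^{2}(C_\epsilon(N))$, given $\eta>0$ I first choose $\delta$ so small that the tip estimate gives $\|u_{m_l}-u_{m_{l'}}\|^{2}_{L^{2}(C_\delta(N))}\le\delta^{2k}(2C)^{2}<\eta/2$ for all $l,l'$, where $C$ bounds the $H^{k}$-norms; then I choose $L$ so that the $A_\delta$-part is $<\eta/2$ for $l,l'\ge L$. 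Summing the two estimates shows $\{u_{m_l}\}$ converges in $L^{2}(C_\epsilon(N))$, which proves compactness.

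I expect the only mildly delicate point to be the verification that the weighted norm dominates the ordinary Sobolev norm on each annulus $A_\delta$ strongly enough to invoke Rellich--Kondrachov; everything else reduces to the elementary scaling estimate at the tip and a standard diagonal argument. The same decomposition, with the model-cone estimate transplanted to a conical neighborhood $U_p$ of $p$ through the weight function $\chi$ and Rellich--Kondrachov applied on the compact smooth part $M\setminus U_p$, is precisely what upgrades this lemma to Theorem \ref{compactness on manifolds}.
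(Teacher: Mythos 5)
Your proof is correct, but it follows a genuinely different route from the paper's. The paper proves the lemma by conjugating with the unitary map $u\mapsto\widetilde u=r^{\frac{n-1}{2}}u$ from the cone to the finite cylinder, expanding in eigenfunctions of $-\Delta_{h}$ on $N$, and using a Hardy-type identity to establish the global inequality $\|u\|_{H^{1}(C_{\epsilon}(N))}\geq\frac{3}{4}\min\{1,\epsilon^{-2}\}\|\widetilde u\|_{W^{1,2}(\widetilde C_{\epsilon}(N))}$ (after first reducing to $k=1$ by monotonicity of the norms); compactness then comes from Rellich on the cylinder. You instead run a tightness-plus-local-compactness argument: the $i=0$ term of the weighted norm \eqref{WSNFC} forces the $L^{2}$-mass on $(0,\delta)\times N$ to be $O(\delta^{2k})$ uniformly on bounded sets, while on $[\delta,\epsilon)\times N$ the weights are bounded above and below so classical Rellich--Kondrachov applies, and a diagonal extraction finishes. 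Both arguments are complete (your tip estimate, the lower bound for the ordinary Sobolev norm on each annulus, and the Cauchy argument all check out, and both proofs tacitly require $k\geq1$, as you note). What the paper's approach buys is the quantitative cone-to-cylinder comparison itself, whose separation-of-variables and Hardy-inequality machinery is reused almost verbatim in the semi-boundedness estimate of Theorem \ref{Semi-boundednessOnCones}; what your approach buys is elementarity --- no eigenfunction expansion on $N$, no Hardy inequality, and a direct treatment of all $k\geq 1$ --- and, as you observe, it transplants immediately to the asymptotically conical setting of Theorem \ref{compactness on manifolds}.
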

\begin{proof}
Clearly $\|u\|_{H^{k}(C_{\epsilon}(N))}\geq\|u\|_{H^{l}(C_{\epsilon}(N))}$, for $k\geq l\in\mathbb{N}$ (and $\epsilon \leq 1$; otherwise there is also a positive constant depending on $\epsilon$). Therefore, it suffices to show that the embedding:
$i:H^{1}(C_{\epsilon}(N))\hookrightarrow L^{2}(C_{\epsilon}(N))$, is compact. Let $(\widetilde{C}_{\epsilon}(N),\widetilde{g})=((0,\epsilon)\times N, dr^{2}+h)$ be a finite cylinder, and $W^{1,2}_{0}(\widetilde{C}_{\epsilon}(N))$ be the usual Sobolev space on the cylinder $\tilde{C}_{\epsilon}(N)$, which is the completion of $C^{\infty}_{0}(\widetilde{C}_{\epsilon}(N))$ with respect to the norm:
$$\|u\|_{W^{1,2}(\widetilde{C}_{\epsilon}(N))}=\int_{\widetilde{C}_{\epsilon}(N)}(u^{2}+|\widetilde{\nabla}u|^{2}_{\widetilde{g}})d\vol_{\widetilde{g}},$$
where $\widetilde{\nabla}u$ is the gradient of $u$ with respect to the metric $\widetilde{g}$. It is clear that the mapping:
\begin{eqnarray*}
L^{2}(C_{\epsilon}(N),g) & \rightarrow & L^{2}(\widetilde{C}_{\epsilon}(N),\widetilde{g}) \\
u & \mapsto & \widetilde{u}=r^{\frac{n-1}{2}}u
\end{eqnarray*}
is unitary, where $n=dim(N)+1$. We will show that
\begin{equation}\label{cone cylinder inequality}
\|u\|_{H^{1}(C_{\epsilon}(N))}\geq\frac{3}{4}\min\{1, \frac{1}{\epsilon^{2}}\}\|\widetilde{u}\|_{W^{1,2}(\widetilde{C}_{\epsilon}(N))},
\end{equation}
for all $u\in C^{\infty}_{0}((0,\epsilon)\times N)$. This then completes the proof, since the embedding
$$W^{1,2}_{0}(\widetilde{C}_{\epsilon}(N))\hookrightarrow L^{2}(\widetilde{C}_{\epsilon}(N),\widetilde{g})$$
is compact by the classical Rellich Lemma.

Now we prove the inequality $(\ref{cone cylinder inequality})$. Let $0=\mu_{1}<\mu_{2}\leq\mu_{3}\leq\cdots\nearrow+\infty$ be the eigenvalues (counted with multiplicity) of the Laplacian, $-\Delta_{N}$, on the compact Riemannian manifold $(N,h)$, and $\psi_{1},\psi_{2},\psi_{3},\cdots$ be corresponding orthonormal eigenfunctions. Let $u\in C^{\infty}_{0}((0,\epsilon)\times N)$. We expand the function $u$ and $\widetilde{u}$, respectively, as
\begin{eqnarray}
u(r,x) &=& \sum^{\infty}_{i=1}u_{i}(r)\psi_{i}(x), \cr
\widetilde{u}(r,x) &=& \sum^{\infty}_{i=1}\widetilde{u}_{i}(r)\psi_{i}(x),
\end{eqnarray}
where $u_{i}(r)=r^{-\frac{n-1}{2}}\widetilde{u}_{i}(r)$.

\begin{align*}
\|u\|^{2}_{H^{1}(C_{\epsilon}(N))} &=\int_{C_{\epsilon}(N)}(\frac{1}{r^{2}}u^{2}+|\nabla u|^{2}_{g})dvol_{g}=\int_{C_{\epsilon}(N)}(\frac{1}{r^{2}}u^{2}+|\partial_{r}u|^{2}+\frac{1}{r^{2}}|\nabla_{N}u|^{2}_{h})d\vol_{g}\\
& =\int^{\epsilon}_{0}\int_{N}[\frac{1}{r^{2}}(\sum^{\infty}_{i=1}u_{i}(r)\psi_{i}(x))^{2}
  +(\sum^{\infty}_{i=1}u^{'}_{i}(r)\psi_{i}(x))^{2}\\
&\ \ \ +\frac{1}{r^{2}}(\sum^{\infty}_{i=1}u_{i}(r)\nabla_{N}\psi_{i}(x))^{2}]r^{n-1}d\vol_{h}dr\\
& =\int^{\epsilon}_{0}[\frac{1}{r^{2}}\sum^{\infty}_{i=1}(u_{i}(r))^{2}
  +\sum^{\infty}_{i=1}(u^{'}_{i}(r))^{2}
  +\frac{1}{r^{2}}\sum^{\infty}_{i=1}\mu_{i}(u_{i}(r))^{2}]r^{n-1}dr\\
& =\int^{\epsilon}_{0}[\frac{1}{r^{2}}\sum^{\infty}_{i=1}(\widetilde{u}_{i}(r))^{2}
  +\sum^{\infty}_{i=1}(-\frac{n-1}{2}\frac{1}{r}\widetilde{u}_i(r)+\widetilde{u}^{'}_{i}(r))^{2}
  +\frac{1}{r^{2}}\sum^{\infty}_{i=1}\mu_{i}(\widetilde{u}_{i}(r))^{2}]dr\\
& =\int^{\epsilon}_{0}[\frac{1}{r^{2}}\sum^{\infty}_{i=1}(1+\frac{(n-1)(n-3)}{4}
  +\mu_{i})(\widetilde{u}_{i}(r))^{2}+\sum^{\infty}_{i=1}(\widetilde{u}^{'}_{i})^{2}]dr\\
& \geq\int^{\epsilon}_{0}[\sum^{\infty}_{i=1}(
\frac{3}{4}+\mu_{i})\frac{1}{r^{2}}(\widetilde{u}_{i}(r))^{2}+\sum^{\infty}_{i=1}(\widetilde{u}^{'}_{i})^{2}]dr \geq\frac{3}{4}\min\{1, \frac{1}{\epsilon^{2}}\}\|\widetilde{u}\|^{2}_{W^{1,2}(\widetilde{C}(N))}
\end{align*}

Here in the above we have used the identity $\int^{\epsilon}_{0}\frac{1}{r}\widetilde{u}_i(r)\widetilde{u}^{'}_{i}(r)\, dr = 1/2 \int^{\epsilon}_{0} \frac{1}{r^2}
(\widetilde{u}_i(r))^2 \, dr$.
\end{proof}

{\it{Proof of Theorem $\ref{compactness on manifolds}$.}} As in the proof of Lemma $\ref{compactness on cones}$, it suffices to show that $H^{1}(M)\hookrightarrow L^{2}(M)$ is compact. Because $(U_{p}\setminus\{p\}, g|_{U_{p}\setminus\{p\}})$ is isometric to $((0,\epsilon)\times N, dr^{2}+r^{2}h_{r})$, where $h_{r}=h_{0}+o(r^{\alpha})$, for some $\alpha>0$, if we define $g_{0}=dr^{2}+r^{2}h_{0}$ on $(0,\epsilon)\times N$, there exists $0<\epsilon_{1}<\frac{\epsilon}{4}$, such that on $(0,\epsilon_{1})\times N$,
$$\frac{1}{2}g_{0}\leq g \leq 2g_{0}.$$
Then for any $u\in C^{\infty}_{0}((0,\epsilon)\times N)$, we have
\begin{equation}\label{inequality for H-space}
\frac{1}{2^{1+\frac{n}{2}}}\|u\|^{2}_{H^{1}(C_{\epsilon_{1}}(N), g_{0})}\leq\|u\|^{2}_{H^{1}(C_{\epsilon_{1}}(N), g)}
\leq2^{1+\frac{n}{2}}\|u\|^{2}_{H^{1}(C_{\epsilon_{1}}(N), g_{0})},
\end{equation}
\begin{equation}\label{inequality for L-space}
\frac{1}{2^{\frac{n}{2}}}\|u\|^{2}_{L^{2}(C_{\epsilon_{1}}(N), g_{0})}\leq\|u\|^{2}_{L^{2}(C_{\epsilon_{1}}(N), g)}
\leq2^{\frac{n}{2}}\|u\|^{2}_{L^{2}(C_{\epsilon_{1}}(N), g_{0})}.
\end{equation}
By Lemma $\ref{compactness on cones}$, inequalities $(\ref{inequality for H-space})$ and $(\ref{inequality for L-space})$ imply that the embedding \begin{equation}\label{compactness on conical part}
H^{1}(C_{\epsilon_{1}}(N),g)\hookrightarrow L^{2}(C_{\epsilon_{1}}(N),g)
\end{equation}
is compact. Set $M_{0}=M\setminus(0,\frac{\epsilon_{1}}{2})\times N$. The compactness of embedding $W^{1,2}_{0}(M_{0})\hookrightarrow L^{2}(M_{0})$ and the compactness of the embedding $(\ref{compactness on conical part})$ imply the compactness of the embedding $H^{1}(M)\hookrightarrow L^{2}(M)$.

\end{section}


\begin{section}{Spectrum of $-4\Delta+R$ on a finite cone}

\noindent In this section, we study the spectrum of the Schr\"{o}dinger operator $L=-4\Delta+R$ on a finite cone $(C_{\epsilon}(N),g)=((0,\epsilon)\times N,dr^{2}+r^{2}h_{0})$ with the Dirichlet boundary condition (on the smooth boundary). By using the compact embedding results obtained in the previous section and establishing a semi-boundedness estimate for the operator $L$, we show that the spectrum of the Friedrichs extension of $L$ on a finite cone with the Dirichlet boundary condition consists of discrete eigenvalues with finite multiplicities.

Let
$$L=-4\Delta_{g}+R_{g}: L^{2}(C_{\epsilon}(N))\rightarrow L^{2}(C_{\epsilon}(N))$$
be a densely defined unbounded operator with the domain $Dom(L)=C^{\infty}_{0}(C_{\epsilon}(N))$.

\begin{thm}\label{Semi-boundednessOnCones}
If the scalar curvature $R_{h_{0}}$ on the cross section $(N^{n-1},h_{0})$ satisfies $R_{h_{0}}>(n-2)$, then
$$(Lu,u)_{L^{2}}\geq\delta_{0}\|u\|_{H^{1}(C_{\epsilon}(N))}$$
for all $u\in C^{\infty}_{0}(C_{\epsilon}(N))$, and some constant $\delta_{0}>0$ depending on $\min\limits_{x\in N}\{R_{h_{0}}(x)\}$ and $n$. In particular, the operator $(L, Dom(L)=C^{\infty}_{0}(C_{\epsilon}(N)))$ is strictly positive.
\end{thm}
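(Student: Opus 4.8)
The plan is to reduce the quadratic form $(Lu,u)_{L^2}$ to a one-dimensional problem by separation of variables, and then exploit Hardy's inequality; the threshold $R_{h_0}>(n-2)$ will appear as exactly the borderline at which Hardy's constant $\frac14$ restores positivity. First I would integrate by parts to rewrite, for $u\in C^\infty_0(C_\epsilon(N))$,
\begin{equation*}
(Lu,u)_{L^2}=\int_{C_\epsilon(N)}\left(4|\nabla u|^2+R_gu^2\right)d\vol_g=\int_{C_\epsilon(N)}\left(4|\partial_ru|^2+\frac{4}{r^2}|\nabla_Nu|_{h_0}^2+\frac{R_{h_0}-(n-1)(n-2)}{r^2}u^2\right)d\vol_g,
\end{equation*}
using $(\ref{Lapace_on_cone})$ and $(\ref{Scale_curvature_on_cone})$. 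Next, exactly as in the proof of Lemma $\ref{compactness on cones}$, I expand $u(r,x)=\sum_iu_i(r)\psi_i(x)$ in the $L^2(N,h_0)$-orthonormal eigenbasis $\{\psi_i\}$ of $-\Delta_{h_0}$ with eigenvalues $0=\mu_1<\mu_2\leq\cdots$, and set $\widetilde u_i(r)=r^{(n-1)/2}u_i(r)$. The computation carried out there (in particular the identity $\int_0^\epsilon r^{-1}\widetilde u_i\widetilde u_i'\,dr=\tfrac12\int_0^\epsilon r^{-2}\widetilde u_i^2\,dr$) gives
\begin{equation*}
4\int_{C_\epsilon(N)}|\nabla u|^2\,d\vol_g=\int_0^\epsilon\sum_i\left[\bigl((n-1)(n-3)+4\mu_i\bigr)\frac{\widetilde u_i^2}{r^2}+4(\widetilde u_i')^2\right]dr.
\end{equation*}

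Bounding $R_{h_0}\geq c_0:=\min_N R_{h_0}>n-2$ pointwise (legitimate since $u^2/r^2\geq0$), the potential term contributes at least $\int_0^\epsilon(c_0-(n-1)(n-2))\sum_i\widetilde u_i^2/r^2\,dr$, and combining with the previous display I reach
\begin{equation*}
(Lu,u)_{L^2}\geq\int_0^\epsilon\sum_i\left[\bigl(c_0-(n-1)+4\mu_i\bigr)\frac{\widetilde u_i^2}{r^2}+4(\widetilde u_i')^2\right]dr.
\end{equation*}
The crux is that the coefficient $c_0-(n-1)+4\mu_i$ of the singular term need not be nonnegative: for the lowest mode $\mu_1=0$ it equals $c_0-(n-1)$, which is negative precisely when $n-2<c_0<n-1$. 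Here Hardy's inequality on $(0,\epsilon)$,
\begin{equation*}
\int_0^\epsilon(\widetilde u_i')^2\,dr\geq\frac14\int_0^\epsilon\frac{\widetilde u_i^2}{r^2}\,dr,
\end{equation*}
is decisive: spending the full derivative term against it converts $4(\widetilde u_i')^2$ into $\widetilde u_i^2/r^2$, upgrading the lowest-mode coefficient to $c_0-(n-1)+1=c_0-(n-2)=:\beta>0$. Thus the hypothesis $R_{h_0}>n-2$ is exactly what makes the form positive.

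To obtain the stated $H^1$-bound rather than mere positivity, I keep a small fraction of the derivative term in reserve. Writing $\|u\|^2_{H^1(C_\epsilon(N))}=\int_0^\epsilon\sum_i[(1+\tfrac{(n-1)(n-3)}{4}+\mu_i)\widetilde u_i^2/r^2+(\widetilde u_i')^2]\,dr$ (again from Lemma $\ref{compactness on cones}$), I compare term by term. Setting $A_i=\int_0^\epsilon\widetilde u_i^2/r^2\,dr$ and $B_i=\int_0^\epsilon(\widetilde u_i')^2\,dr$, with Hardy in the form $A_i\leq4B_i$, a short case analysis (on the sign of $c_0-(n-1)+4\mu_i-\delta_0(1+\tfrac{(n-1)(n-3)}{4}+\mu_i)$) shows that any $\delta_0$ with $0<\delta_0\leq\min\{4\beta/(5+(n-1)(n-3)),\,1\}$ satisfies, for every $i$,
\begin{equation*}
\bigl(c_0-(n-1)+4\mu_i\bigr)A_i+4B_i\geq\delta_0\left[\bigl(1+\tfrac{(n-1)(n-3)}{4}+\mu_i\bigr)A_i+B_i\right],
\end{equation*}
where the borderline case $\mu_i=0$ uses $\beta>0$. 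Summing over $i$ gives $(Lu,u)_{L^2}\geq\delta_0\|u\|^2_{H^1(C_\epsilon(N))}$, and since $\|u\|^2_{H^1}\geq\|u\|^2_{L^2}$ on $C_\epsilon(N)$ (for $\epsilon\leq1$; otherwise up to a constant) this yields strict positivity. The main obstacle is the borderline behavior of the lowest mode, where positivity hinges on the precise value $\frac14$ of Hardy's constant matching the threshold $n-2$; the bookkeeping that simultaneously secures a uniform-in-$i$ constant $\delta_0$ while leaving enough of $B_i$ to dominate the $H^1$ derivative term is the delicate point.
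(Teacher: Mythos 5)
Your proof is correct and follows essentially the same route as the paper's: separation of variables on the cone, the substitution $\widetilde u_i=r^{(n-1)/2}u_i$, and the one-dimensional Hardy inequality with constant $\tfrac14$, with the threshold $R_{h_0}>n-2$ emerging from exactly the same cancellation $-(n-1)(n-2)+(n-1)(n-3)+1=-(n-2)$. The only differences are organizational: you bound $R_{h_0}$ below by its minimum and diagonalize $-\Delta_{h_0}$ alone (the paper diagonalizes the full cross-sectional operator $-(4-\delta_0)\Delta_{h_0}+R_{h_0}-\delta_0$), and you secure the $\delta_0\|u\|^2_{H^1}$ lower bound by a term-by-term comparison at the end rather than by the paper's upfront splitting $L=L_{\delta_0}-\delta_0\Delta+\delta_0 r^{-2}$.
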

\begin{proof}
Because the manifold $(N^{n-1},h)$ is compact, and $R_{h_{0}}>(n-2)$, we have
$$\min\limits_{x\in N}\{R_{h_{0}}(x)\}>(n-2).$$
And because
$$(n-1)(n-2)-\frac{4-\delta}{4}[(n-1)(n-3)+1]+\delta\rightarrow n-2, \ \ \ \text{as} \ \ \delta\searrow0,$$
there exists $\delta_{0}>0$, such that
\begin{equation}\label{lower bound of scalar curvature on cross section}
\min_{x\in N}\{R_{h_{0}}(x)\}>(n-1)(n-2)-\frac{4-\delta_{0}}{4}[(n-1)(n-3)+1]+\delta_{0}.
\end{equation}

Set $$L_{\delta_{0}}=-(4-\delta_{0})\Delta_{g}+R_{g}-\frac{1}{r^{2}}\delta_{0}.$$
Then $$L=L_{\delta_{0}}-\delta_{0}\Delta_{g}+\frac{1}{r^{2}}\delta_{0},$$
and for any $u\in C^{\infty}_{0}(C_{\epsilon}(N))$,
\begin{align*}
(Lu,u)_{L^{2}}
&=\int_{C_{\epsilon}(N)}(Lu)u\, d\vol_{g}\\
&=\int_{C_{\epsilon}(N)}(L_{\delta_{0}}u)u\, d\vol_{g}\\
&\ \ \ +\int_{C_{\epsilon}(N)}[(-\delta_{0}\Delta_{g} u)u+\frac{1}{r^{2}}\delta_{0}u^{2}]d\vol_{g}\\
&=\int_{C_{\epsilon}(N)}(L_{\delta_{0}}u)u\, d\vol_{g}\\
&\ \ \ +\delta_{0}\int_{C_{\epsilon}(N)}(|\nabla u|^{2}+\frac{1}{r^{2}}u^{2})d\vol_{g}\\
&=(L_{\delta_{0}}u,u)_{L^{2}}+\delta_{0}\|u\|_{H^{1}(C_{\epsilon}(N))}.
\end{align*}
Thus it suffices to show that $(L_{\delta_{0}}u,u)_{L^{2}}\geq0$.

Indeed, we claim that
\begin{equation}\label{postivity of L_delta}
(L_{\delta_{0}}u,u)_{L^{2}}\geq C\|u\|_{L^{2}},
\end{equation}
for all $u\in C^{\infty}_{0}(C_{\epsilon}(N))$,
where
$$C=\min\{\min\limits_{x\in N}\{R_{h_{0}}(x)\}-[(n-1)(n-2)-\frac{4-\delta_{0}}{4}((n-1)(n-3)+1)+\delta_{0}],1\}>0.$$

Now we prove the claim $(\ref{postivity of L_delta})$. For any $u\in C^{\infty}_{0}(C_{\epsilon}(N))$, we similarly expand it  in terms of an orthonormal basis of eigenfunctions $\psi_{i}(x)$, this time,  of operator $-(4-\delta_{0})\Delta_{h_{0}}+R_{h_{0}}-\delta_{0}$ on $N^{n-1}$,  with eigenvalues $\mu_{i}$,
\begin{equation}\label{expansion of smooth functions}
u=\sum^{\infty}_{i=0}u_{i}(r)\varphi_{i}(x).
\end{equation}
Then by using $(\ref{Lapace_on_cone})$ and $(\ref{Scale_curvature_on_cone})$,

\begin{equation*}
L_{\delta_{0}}u=\sum^{\infty}_{i=0}\{-(4-\delta)u''_{i}(r)-(4-\delta)\frac{n-1}{r}u'_{i}(r)-\frac{1}{r^{2}}[-\mu_{i}+(n-1)(n-2)]u_{i}(r)\}\psi_{i}.
\end{equation*}

Again $\tilde{u}_{i}(r)=r^{\frac{n-1}{2}}u_{i}(r)$, and hence
\begin{equation*}\label{expansion for L}
L_{\delta_{0}}u=\sum^{\infty}_{i=0}\{-(4-\delta_{0})\tilde{u}''_{i}+\frac{1}{r^{2}}[\mu_{i}-(n-1)(n-2)+\frac{4-\delta_{0}}{4}(n-1)(n-3)]\tilde{u}_{i}(r)\}
r^{-\frac{n-1}{2}}\psi_{i}.
\end{equation*}

Because $\mu_{i}\rightarrow +\infty$ as $i\rightarrow +\infty$, we can take large enough $i_{0}\in \mathbb{N}$ such that for all $i\geq i_{0}, \ \  \mu_{i}-(n-1)(n-2)+\frac{4-\delta_{0}}{4}(n-1)(n-3)>1$.

\begin{align*}
(L_{\delta_{0}}u,u)_{L^{2}} &= \int^{\epsilon}_{0}\int_{N}\{\sum^{\infty}_{i=0}[-(4-\delta_{0})\tilde{u}''_{i}(r)+\frac{1}{r^{2}}(\mu_{i}-(n-1)(n-2)\\
&\ \ \ \ +\frac{4-\delta_{0}}{4}(n-1)(n-3))\tilde{u}_{i}(r)]r^{-\frac{n-1}{2}}\psi_{i}\}
\{\sum^{\infty}_{j=0}\tilde{u}_{j}(r)r^{-\frac{n-1}{2}}\psi_{i}\}r^{n-1}d\vol_{h_{0}}dr\\
&= \int^{\epsilon}_{0}\sum^{\infty}_{i=0}(4-\delta_{0})(\tilde{u}'_{i}(r))^{2}dr\\
&\ \ \ +\int^{\epsilon}_{0}\sum^{\infty}_{i=0}\{\frac{1}{r^{2}}[\mu_{i}-(n-1)(n-2)+\frac{4-\delta_{0}}{4}(n-1)(n-3)]
(\tilde{u}_{i}(r))^{2}\}dr\\
&= \int^{\epsilon}_{0}\sum^{i_{0}}_{i=0}(4-\delta_{0})(\tilde{u}'_{i}(r))^{2}dr\\
&\ \ \  +\int^{\epsilon}_{0}\sum^{i_{0}}_{i=0}\{\frac{1}{r^{2}}[\mu_{i}-(n-1)(n-2)+\frac{4-\delta_{0}}{4}(n-1)(n-3)](\tilde{u}_{i}(r))^{2}\}dr\\
&\ \ \  + \int^{\epsilon}_{0}\sum^{\infty}_{i=i_{0}+1}(4-\delta_{0})(\tilde{u}'_{i}(r))^{2}dr\\
&\ \ \  +\int^{\epsilon}_{0}\sum^{\infty}_{i=i_{0}+1}\{\frac{1}{r^{2}}[\mu_{i}-(n-1)(n-2)+\frac{4-\delta_{0}}{4}(n-1)(n-3)](\tilde{u}_{i}(r))^{2}\}dr\\
\end{align*}

Denote by $ \uppercase\expandafter{\romannumeral1}$ the sum of the first two terms and $ \uppercase\expandafter{\romannumeral2}$ that of the last two terms.

By using the Hardy's inequality,
\begin{align*}
\uppercase\expandafter{\romannumeral1} &= \int^{\epsilon}_{0}\sum^{i_{0}}_{i=0}(4-\delta_{0})(\tilde{u}'_{i}(r))^{2}dr\\
&\ \ \  +\int^{\epsilon}_{0}\sum^{i_{0}}_{i=0}\{\frac{1}{r^{2}}[\mu_{i}-(n-1)(n-2)+\frac{4-\delta_{0}}{4}(n-1)(n-3)](\tilde{u}_{i}(r))^{2}\}dr\\
&\geq \int^{\epsilon}_{0}\sum^{i_{0}}_{i=0}\frac{4-\delta_{0}}{4}\frac{1}{r^{2}}(\tilde{u}_{i}(r))^{2}dr\\
&\ \ \  +\int^{\epsilon}_{0}\sum^{i_{0}}_{i=0}\{\frac{1}{r^{2}}[\mu_{i}-(n-1)(n-2)+\frac{4-\delta_{0}}{4}(n-1)(n-3)](\tilde{u}_{i}(r))^{2}\}dr\\
&\geq \{\min\limits_{x\in N}\{R_{h_{0}}(x)\}-[(n-1)(n-2)\\
&\ \ \ -\frac{4-\delta_{0}}{4}((n-1)(n-3)+1)+\delta_{0}]\}
\int^{\epsilon}_{0}\frac{1}{r^{2}}\sum^{i_{0}}_{i=0}(\tilde{u}_{i}(r))^{2}dr\\
&\geq C\int^{\epsilon}_{0}\sum^{i_{0}}_{i=0}(\tilde{u}_{i}(r))^{2}dr.
\end{align*}

On the other hand,  since $\mu_{i}-(n-1)(n-2)+\frac{4-\delta_{0}}{4}(n-1)(n-3)>1$ for all $i>i_{0}$,
\begin{align*}
\uppercase\expandafter{\romannumeral2} &=\int^{\epsilon}_{0}\sum^{\infty}_{i=i_{0}+1}(4-\delta_{0})(\tilde{u}'_{i}(r))^{2}dr\\
&\ \ \  +\int^{\epsilon}_{0}\sum^{\infty}_{i=i_{0}+1}\{\frac{1}{r^{2}}[\mu_{i}-(n-1)(n-2)+\frac{4-\delta_{0}}{4}(n-1)(n-3)](\tilde{u}_{i}(r))^{2}\}dr\\
&\geq \int^{\epsilon}_{0}\sum^{\infty}_{i=i_{0}}(\tilde{u}_{i}(r))^{2}dr\\
&\geq C\int^{\epsilon}_{0}\sum^{\infty}_{i=i_{0}}(\tilde{u}_{i}(r))^{2}dr.
\end{align*}
This proves the claim $(\ref{postivity of L_delta})$. And the proof is complete.
\end{proof}

\begin{cor}\label{Friedrichs extension on cones}
If the scalar curvature $R_{h_{0}}$ on $(N^{n-1},h_{0})$ satisfies $R_{h_{0}}>(n-2)$, then the operator $(L, Dom(L)=C^{\infty}_{0}(C_{\epsilon}(N)))$ has a self-adjoint strictly positive Friedrichs extension $(\widetilde{L}, Dom(\widetilde{L}))$. Moreover, $Dom(\widetilde{L})\subset H^{1}(C_{\epsilon}(N))$, $\widetilde{L}$ is injective, and the image $Ran(\widetilde{L})=L^{2}(C_{\epsilon}(N))$.
\end{cor}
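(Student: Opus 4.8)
The plan is to obtain $\widetilde L$ by the standard Friedrichs construction for a semi-bounded symmetric operator, the essential point being that the coercivity estimate of Theorem \ref{Semi-boundednessOnCones} identifies the form domain of $L$ with the weighted Sobolev space $H^{1}(C_{\epsilon}(N))$. First I would note that $L=-4\Delta_{g}+R_{g}$ is symmetric on $C^{\infty}_{0}(C_{\epsilon}(N))$: since these test functions are supported away from both the tip $r=0$ and the boundary $r=\epsilon$, integration by parts yields no boundary terms and $R_{g}$ is real. Associated to $L$ is the quadratic form
\[
Q(u,v)=(Lu,v)_{L^{2}}=\int_{C_{\epsilon}(N)}\bigl(4\langle\nabla u,\nabla v\rangle_{g}+R_{g}\,uv\bigr)\,d\vol_{g},\qquad u,v\in C^{\infty}_{0}(C_{\epsilon}(N)),
\]
and the whole argument rests on controlling its completion.

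The key step is a two-sided comparison of $Q$ with the $H^{1}$ norm. On one side, Theorem \ref{Semi-boundednessOnCones} gives $Q(u,u)\geq\delta_{0}\|u\|^{2}_{H^{1}(C_{\epsilon}(N))}$; on the other, using the explicit form $R_{g}=r^{-2}[R_{h_{0}}-(n-1)(n-2)]$ from $(\ref{Scale_curvature_on_cone})$, together with $|R_{g}|\leq C r^{-2}$, I get a matching upper bound $Q(u,u)\leq C'\|u\|^{2}_{H^{1}(C_{\epsilon}(N))}$. Hence $\sqrt{Q(u,u)}$ is equivalent to $\|u\|_{H^{1}(C_{\epsilon}(N))}$ on $C^{\infty}_{0}(C_{\epsilon}(N))$, and since $H^{1}(C_{\epsilon}(N))$ is by definition the $\|\cdot\|_{H^{1}}$-completion of $C^{\infty}_{0}(C_{\epsilon}(N))$, the closure of the strictly positive form $Q$ has domain exactly $H^{1}(C_{\epsilon}(N))$. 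The abstract Friedrichs extension theorem then produces a self-adjoint operator $\widetilde L$ with form domain $H^{1}(C_{\epsilon}(N))$ and with $Dom(\widetilde L)\subset H^{1}(C_{\epsilon}(N))$, which is the claimed inclusion. Moreover, because on $(0,\epsilon)$ the weight $\chi^{2}=r^{-2}$ dominates a positive constant, one has $\|u\|^{2}_{H^{1}}\geq\epsilon^{-2}\|u\|^{2}_{L^{2}}$, so the lower bound upgrades to $Q(u,u)\geq c\|u\|^{2}_{L^{2}}$ with $c>0$; thus $\widetilde L$ is bounded below by a positive multiple of the identity, hence strictly positive, and $(\widetilde L u,u)\geq c\|u\|^{2}_{L^{2}}$ immediately forces injectivity.

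For the surjectivity $Ran(\widetilde L)=L^{2}(C_{\epsilon}(N))$ I would invoke Lax--Milgram on the Hilbert space $H^{1}(C_{\epsilon}(N))$: the bilinear form $Q$ is bounded and coercive there by the two-sided comparison, and for each $f\in L^{2}(C_{\epsilon}(N))$ the functional $v\mapsto(f,v)_{L^{2}}$ is continuous on $H^{1}(C_{\epsilon}(N))$ since $\|v\|_{L^{2}}\leq\epsilon\|v\|_{H^{1}}$. Lax--Milgram then gives a unique $u\in H^{1}(C_{\epsilon}(N))$ with $Q(u,v)=(f,v)_{L^{2}}$ for all $v$, which means $u\in Dom(\widetilde L)$ and $\widetilde L u=f$; equivalently, a self-adjoint operator bounded below by $c>0$ has $0$ in its resolvent set and is boundedly invertible onto $L^{2}(C_{\epsilon}(N))$. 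The hard part here is not any of these final deductions but the identification of the form domain with $H^{1}(C_{\epsilon}(N))$: one must pair the coercivity of Theorem \ref{Semi-boundednessOnCones} with the matching upper bound and verify that the Dirichlet condition at $r=\epsilon$ is correctly encoded in the $H^{1}$-completion of $C^{\infty}_{0}(C_{\epsilon}(N))$. Once the form norm and the $H^{1}$ norm are known to be equivalent, everything else is routine. I note that the compact embedding of Lemma \ref{compactness on cones} is not needed for this statement; it enters only later, for the discreteness of the spectrum.
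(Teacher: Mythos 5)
Your proposal is correct and follows essentially the same route as the paper: both rest on the coercivity estimate of Theorem \ref{Semi-boundednessOnCones} and the abstract Friedrichs construction for a strictly positive symmetric operator (the paper cites the Neumann theorem in \cite{EK}, while you unpack the same content via the form closure and Lax--Milgram). The only difference is that you additionally establish the upper bound $Q(u,u)\leq C'\|u\|^{2}_{H^{1}}$ and hence that the form domain \emph{equals} $H^{1}(C_{\epsilon}(N))$, whereas the paper only needs (and only proves) the lower bound, which already yields the stated inclusion $Dom(\widetilde{L})\subset H^{1}(C_{\epsilon}(N))$.
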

\begin{proof}
The existence of the self-adjoint strictly positive and surjective extension follows from the Neumann Theorem in \cite{EK}, because the operator $(L, Dom(L))$ is strictly positive by Theorem $\ref{Semi-boundednessOnCones}$. Moreover, from Theorem $\ref{Semi-boundednessOnCones}$, we can obtain that the completion of $C^{\infty}_{0}(C_{\epsilon}(N))$ with respect to the norm $\|u\|_{L}=(Lu,u)_{L^{2}}$ is a subspace of $H^{1}(C_{\epsilon}(N))$. Thus from the construction of the Friedrichs extension in the proof of the Neumann theorem in \cite{EK}, we can easily see that $Dom(\widetilde{L})\subset H^{1}(C_{\epsilon}(N))$.
\end{proof}

\begin{thm}\label{spectrum on cones}
If the scalar curvature of $(N^{n-1},h_{0})$, $R_{h_{0}}>(n-2)$, then the spectrum of the Friedrichs extension of the operator $-4\Delta_{g}+R_{g}$ on $(C_{\epsilon}(N),g=dr^{2}+r^{2}h_{0})$ consists of discrete eigenvalues with finite multiplicities
$$\lambda_{1}\leq\lambda_{2}\leq\lambda_{3}\leq\cdots,$$
and $\lambda_{k}\rightarrow +\infty$ as $k\rightarrow+\infty$. Moreover,  the corresponding eigenfunctions $\{\varphi_{i}\}^{\infty}_{i=1}$ form a complete basis of $L^{2}(C_{\epsilon}(N))$.
\end{thm}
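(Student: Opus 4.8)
The plan is to reduce the statement to the spectral theorem for compact self-adjoint operators by showing that the inverse of the Friedrichs extension is a compact, positive, self-adjoint operator on $L^{2}(C_{\epsilon}(N))$. By Corollary \ref{Friedrichs extension on cones} the Friedrichs extension $\widetilde{L}$ is self-adjoint, strictly positive, injective, and surjective onto $L^{2}(C_{\epsilon}(N))$ with $Dom(\widetilde{L})\subset H^{1}(C_{\epsilon}(N))$. Hence $\widetilde{L}^{-1}\colon L^{2}(C_{\epsilon}(N))\to Dom(\widetilde{L})$ is everywhere defined, and since $\widetilde{L}$ is self-adjoint and bijective, $\widetilde{L}^{-1}$ is bounded and self-adjoint on $L^{2}(C_{\epsilon}(N))$; the strict positivity of $\widetilde{L}$ makes $\widetilde{L}^{-1}$ positive as well, since $(\widetilde{L}^{-1}f,f)_{L^{2}}=(\widetilde{L}g,g)_{L^{2}}\geq 0$ for $g=\widetilde{L}^{-1}f$.

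The key step is to upgrade the target of $\widetilde{L}^{-1}$ from $L^{2}$ to $H^{1}$. First I would extend the semi-boundedness estimate of Theorem \ref{Semi-boundednessOnCones}, namely $(Lu,u)_{L^{2}}\geq\delta_{0}\|u\|^{2}_{H^{1}(C_{\epsilon}(N))}$, from $C^{\infty}_{0}(C_{\epsilon}(N))$ to all of $Dom(\widetilde{L})$ by continuity: the form domain of $\widetilde{L}$ is exactly the completion of $C^{\infty}_{0}(C_{\epsilon}(N))$ in the form norm, which by Theorem \ref{Semi-boundednessOnCones} embeds continuously into $H^{1}(C_{\epsilon}(N))$, so the inequality persists on $Dom(\widetilde{L})$. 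Then for $f\in L^{2}(C_{\epsilon}(N))$ with $u=\widetilde{L}^{-1}f$,
$$\delta_{0}\|u\|^{2}_{H^{1}(C_{\epsilon}(N))}\leq(\widetilde{L}u,u)_{L^{2}}=(f,u)_{L^{2}}\leq\|f\|_{L^{2}}\|u\|_{L^{2}}\leq C\|f\|_{L^{2}}\|u\|_{H^{1}(C_{\epsilon}(N))},$$
where the last step uses the continuity of the embedding $H^{1}(C_{\epsilon}(N))\hookrightarrow L^{2}(C_{\epsilon}(N))$. This yields $\|\widetilde{L}^{-1}f\|_{H^{1}(C_{\epsilon}(N))}\leq\frac{C}{\delta_{0}}\|f\|_{L^{2}}$, so $\widetilde{L}^{-1}\colon L^{2}(C_{\epsilon}(N))\to H^{1}(C_{\epsilon}(N))$ is bounded.

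Next, composing this bounded map with the compact embedding $H^{1}(C_{\epsilon}(N))\hookrightarrow L^{2}(C_{\epsilon}(N))$ of Lemma \ref{compactness on cones}, I conclude that $\widetilde{L}^{-1}\colon L^{2}(C_{\epsilon}(N))\to L^{2}(C_{\epsilon}(N))$ is compact. Being compact, self-adjoint, positive, and injective, the spectral theorem for compact self-adjoint operators furnishes a complete orthonormal basis $\{\varphi_{i}\}^{\infty}_{i=1}$ of $L^{2}(C_{\epsilon}(N))$ consisting of eigenfunctions of $\widetilde{L}^{-1}$ with eigenvalues $\kappa_{i}>0$, each of finite multiplicity, and $\kappa_{i}\to 0$; injectivity of $\widetilde{L}^{-1}$ forces its kernel to be trivial, so these eigenfunctions exhaust $L^{2}(C_{\epsilon}(N))$. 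Setting $\lambda_{i}=\kappa_{i}^{-1}$, each $\varphi_{i}$ is an eigenfunction of $\widetilde{L}$ with eigenvalue $\lambda_{i}$, the multiplicities remain finite, and $\lambda_{i}\to+\infty$. Reordering so that $\lambda_{1}\leq\lambda_{2}\leq\cdots$ gives the claim.

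The main obstacle I anticipate is the bookkeeping in the second step: ensuring that the a priori estimate $(\widetilde{L}u,u)_{L^{2}}\geq\delta_{0}\|u\|^{2}_{H^{1}(C_{\epsilon}(N))}$, proved in Theorem \ref{Semi-boundednessOnCones} only for smooth compactly supported $u$, genuinely holds on all of $Dom(\widetilde{L})$, so that $\widetilde{L}^{-1}$ lands boundedly in $H^{1}(C_{\epsilon}(N))$. This hinges on identifying the form domain of the Friedrichs extension with the $H^{1}$-type completion, which is precisely the content built into the construction recalled in Corollary \ref{Friedrichs extension on cones}. Once that identification is secured, the remaining arguments are the standard compact-resolvent package.
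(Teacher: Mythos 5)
Your proposal is correct and follows essentially the same route as the paper: invert the Friedrichs extension using Corollary \ref{Friedrichs extension on cones}, factor $\widetilde{L}^{-1}$ through the compact embedding $H^{1}(C_{\epsilon}(N))\hookrightarrow L^{2}(C_{\epsilon}(N))$ of Lemma \ref{compactness on cones}, and apply the spectral theorem for compact self-adjoint operators. The only difference is that you spell out the boundedness of $\widetilde{L}^{-1}$ into $H^{1}$ via the extended coercivity estimate, a step the paper leaves implicit.
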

\begin{proof}
By the Corollary $\ref{Friedrichs extension on cones}$, the Friedrichs extension $\widetilde{L}: Dom(\widetilde{L})\rightarrow L^{2}(C_{\epsilon}(N))$ is one-to-one and onto. And its inverse
$$\widetilde{L}^{-1}: L^{2}(C_{\epsilon}(N))\rightarrow Dom(\widetilde{L})\hookrightarrow H^{1}(C_{\epsilon}(N))\hookrightarrow L^{2}(C_{\epsilon}(N))$$
is a self-adjoint compact operator, because the embeddding $H^{1}(C_{\epsilon}(N))\hookrightarrow L^{2}(C_{\epsilon}(N))$ is compact. Then the spectrum theorem of self-adjoint compact operators completes the proof.
\end{proof}
\end{section}


\begin{section}{Spectrum of $-4\Delta+R$ on compact manifolds with a single conical singularity}

\noindent In this section, we study the spectrum of the Schr\"{o}dinger operator $-4\Delta+R$ on compact Riemannian manifolds with a single conical singularity. By using the semi-boundedness estimate for the operator $-4\Delta+R$ on finite cones in Theorem $\ref{Semi-boundednessOnCones}$, we establish a similar estimate for the operator $-4\Delta+R$ on compact Riemannian manifolds with a single conical singularity. Then, we prove that the spectrum of the Friderichs extension of the operator $-4\Delta+R$ on compact Riemannian manifolds with a single conical singularity consists of discrete eigenvalues with finite multiplicities.

\begin{thm}\label{Semi-boundedness}
Let $(M^{n},g,p)$ be a compact Riemannian manifold with a single conical singularity at $p$. If the scalar curvature $R_{h_{0}}$ on $(N^{n-1},h_{0})$ satisfies $R_{h_{0}}>(n-2)$, then there exists a large enough constant $A$, such that the operator $L_{A}=-4\Delta_{g}+R_{g}+A$ satisfies:
$$(L_{A}u,u)_{L^{2}(M)}\geq C\|u\|_{H^{1}(M)}$$
for all $u\in C^{\infty}_{0}(M\setminus\{p\})$ and some constant $C>0$. In particular, the operator $(L_{A}, Dom(L_{A})=C^{\infty}_{0}(M\setminus\{p\}))$ is strictly positive.
\end{thm}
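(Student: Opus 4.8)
The plan is to localize the quadratic form $(L_A u,u)_{L^2}$ into a piece supported in a small conical neighborhood of $p$ and a piece supported in the smooth part, estimate each separately using the two ingredients already in place --- the cone estimate (Theorem \ref{Semi-boundednessOnCones}) and the metric comparisons $(\ref{inequality for H-space})$, $(\ref{inequality for L-space})$ --- and then absorb all lower-order errors into the free constant $A$. To make the cross terms cancel I would use an IMS-type partition of unity. Fix $\epsilon_1<\epsilon/4$ and set $\phi_1=\cos\beta(r)$, $\phi_2=\sin\beta(r)$, where $\beta(r)$ increases smoothly from $0$ to $\pi/2$ on $[\epsilon_1/2,\epsilon_1]$; then $\phi_1^2+\phi_2^2\equiv1$, with $\phi_1$ supported in $C_{\epsilon_1}(N)$ and $\equiv1$ on $C_{\epsilon_1/2}(N)$, while $W:=|\nabla\phi_1|^2+|\nabla\phi_2|^2$ is a bounded function supported in the annulus $\{\epsilon_1/2\le r\le\epsilon_1\}$, hence bounded away from $p$. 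Expanding squares and using $\phi_1^2+\phi_2^2\equiv1$ (so $\sum_j\phi_j\nabla\phi_j=0$) gives the localization identity
\begin{equation*}
(Lu,u)_{L^2}=\sum_{j=1}^{2}(L(\phi_j u),\phi_j u)_{L^2}-4\int_M W\,u^2\,d\vol_g,
\end{equation*}
together with $\sum_j\|\phi_j u\|^2_{H^1(M)}=\|u\|^2_{H^1(M)}+\int_M W u^2\,d\vol_g\ge\|u\|^2_{H^1(M)}$ and $\sum_j\|\phi_j u\|^2_{L^2}=\|u\|^2_{L^2}$.

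Next I would estimate the conical piece $v=\phi_1 u$, supported in $C_{\epsilon_1}(N)$, by comparison with the exact model cone $g_0=dr^2+r^2h_0$. Since $h_r=h_0+o(r^\alpha)$, by shrinking $\epsilon_1$ I can arrange $(1-\eta)g_0\le g\le(1+\eta)g_0$ with $\eta$ as small as desired, which yields norm equivalences of the type $(\ref{inequality for H-space})$, $(\ref{inequality for L-space})$ with constants close to $1$, and which makes the difference of the two singular potentials small relative to $r^{-2}$. Writing $(L(\phi_1 u),\phi_1 u)_{g}=4\int_g|\nabla v|^2_g\,d\vol_g+\int_g R_g v^2\,d\vol_g$ and comparing each term with its $g_0$-counterpart, Theorem \ref{Semi-boundednessOnCones} applies to $v$ on the model cone with the \emph{strict} margin $\delta_0>0$ (available precisely because $R_{h_0}>(n-2)$ strictly), and that margin absorbs the gradient, volume, and potential perturbations, leaving $(L(\phi_1 u),\phi_1 u)_{L^2(M,g)}\ge\frac{\delta_0}{2}\|\phi_1 u\|^2_{H^1(M)}$.

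The smooth piece $w=\phi_2 u$ is supported in $M\setminus C_{\epsilon_1/2}(N)$, a relatively compact region of the smooth manifold $(M_0,g)$ on which $\chi$ is comparable to $1$, $R_g$ is bounded, and $-\Delta_g$ is uniformly elliptic. Here a standard G\aa rding/integration-by-parts estimate gives
\begin{equation*}
(L(\phi_2 u),\phi_2 u)_{L^2}=4\int_M|\nabla w|^2\,d\vol_g+\int_M R_g w^2\,d\vol_g\ge c_2\|\phi_2 u\|^2_{H^1(M)}-C_2\|\phi_2 u\|^2_{L^2}
\end{equation*}
for some $c_2>0$, with the negative part coming only from the (bounded) potential.

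Finally I would combine. Inserting the two pieces into the localization identity and using $\sum_j\|\phi_j u\|^2_{H^1(M)}\ge\|u\|^2_{H^1(M)}$, $\|\phi_2 u\|_{L^2}\le\|u\|_{L^2}$, and $\int W u^2\le\|W\|_\infty\|u\|^2_{L^2}$, I obtain
\begin{equation*}
(Lu,u)_{L^2}\ge \min\{\tfrac{\delta_0}{2},c_2\}\,\|u\|^2_{H^1(M)}-\big(C_2+4\|W\|_\infty\big)\|u\|^2_{L^2},
\end{equation*}
so that $(L_A u,u)_{L^2}=(Lu,u)_{L^2}+A\|u\|^2_{L^2}\ge C\|u\|^2_{H^1(M)}$ with $C=\min\{\delta_0/2,c_2\}>0$ as soon as $A\ge C_2+4\|W\|_\infty$; strict positivity of $(L_A,C_0^\infty(M\setminus\{p\}))$ is then immediate. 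The main obstacle is the conical transfer of the second paragraph: one must control the genuinely singular potential $R_g$ against the model $R_{g_0}$ near $p$, i.e.\ verify that $R_g-R_{g_0}=o(r^{-2})$ so that the $\delta_0$-margin from Theorem \ref{Semi-boundednessOnCones} dominates the perturbation. This is exactly the point where the asymptotic structure of the cross-section metrics $h_r$ near the cone point must be used.
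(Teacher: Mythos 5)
Your proposal is correct, and its global architecture is the same as the paper's: split $u$ into a piece supported in a small asymptotically conical neighborhood and a piece supported in the smooth part, apply the exact-cone estimate of Theorem \ref{Semi-boundednessOnCones} to the first (transferring it to the metric $g$ via the comparison with $g_0$, exactly as in $(\ref{inequality for H-space})$--$(\ref{inequality for L-space})$ and $(\ref{scalar curvature of asymptotic cone})$), apply a G\aa rding estimate to the second, and absorb all $L^2$-errors into $A$. Where you genuinely diverge is the treatment of the localization itself. The paper uses an ordinary partition of unity $\rho_1+\rho_2=1$, writes $(L_Bu,u)=\sum_{i,j}(L_Bu_i,u_j)$, and then spends most of the proof estimating the off-diagonal terms $\int(L_Bu_1)u_2$ by repeated integration by parts, producing the constants $C_3,\dots,C_8$ and a somewhat delicate matching of $1/C_7$ against the coercivity constants. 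Your IMS choice $\phi_1^2+\phi_2^2\equiv1$ makes $\sum_j\phi_j\nabla\phi_j=0$, so the cross terms cancel identically and the entire localization error collapses to the single term $-4\int_MWu^2$ with $W$ bounded and supported away from $p$ --- trivially absorbed into $A$. This is cleaner and less error-prone; what the paper's version buys in exchange is essentially nothing here (the first-order partition is just the more naive choice). Your handling of the cone transfer --- shrinking $\epsilon_1$ so the metric and potential perturbations are dominated by the strict margin $\delta_0$, which exists precisely because $R_{h_0}>n-2$ is an open condition --- is equivalent to the paper's explicit $\beta(n)$ bookkeeping in $(\ref{postivity on asymptotic cone})$, and you correctly identify the one point that requires care: the comparison $R_g-R_{g_0}=o(r^{-2})$ needs control on the $h_r$-asymptotics (including derivatives), which is the same implicit assumption the paper makes in deriving $(\ref{scalar curvature of asymptotic cone})$. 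One cosmetic remark: the coercivity naturally comes out against $\|u\|^2_{H^1(M)}$, as you write it; the paper's statement $(L_Au,u)\geq C\|u\|_{H^1(M)}$ without the square is a typographical slip carried over from Theorem \ref{Semi-boundednessOnCones}.
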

\begin{proof}
The conical neighborhood $(U_{p}\setminus\{p\},g|_{U_{p}\setminus\{p\}})$ of conical singularity $p$ is isometric to $((0,\epsilon)\times N, dr^{2}+r^{2}h_{r})$, where $h_{r}=h_{0}+o(r^{\alpha})$, for some $\alpha>0$. Then the scalar curvature on the conical neighborhood is given by
\begin{equation}\label{scalar curvature of asymptotic cone}
\begin{aligned}
R_{g}
&=\frac{1}{r^{2}}(R_{h_{r}}-(n-1)(n-2)+o(r^{\alpha}))\\
&=\frac{1}{r^{2}}(R_{h_{0}}-(n-1)(n-2)+o(r^{\alpha})).
\end{aligned}
\end{equation}
Because $R_{h_{0}}>(n-2)$, there exists $\beta(n)\in(0,1)$ such that
$$R_{h_{0}}>(n-1)[\frac{1}{\beta(n)^{2n+2}}(n-2)-(n-3)]-1.$$
Then there exists $\epsilon(n)>0$, such that on $(0,\epsilon(n))\times N$,
$$\beta(n)^{2}g_{0}\leq g\leq \frac{1}{\beta(n)^{2}}g_{0},$$
$$\beta(n)R_{h_{0}}\leq r^{2}R_{g}+(n-1)(n-2)\leq\frac{1}{\beta(n)}R_{h_{0}}.$$
For any $u\in C^{\infty}_{0}((0,\epsilon(n))\times N)$, we have
\begin{align*}
(Lu,u)_{L^{2}(C_{\epsilon(n)}(N))}
&=\int_{C_{\epsilon(n)}(N)}(-4\Delta u+Ru)u\, d\vol_{g}\\
&=\int_{C_{\epsilon(n)}(N)}(4|\nabla u|^{2}+Ru^{2})d\vol_{g}\\
&\geq \int_{C_{\epsilon(n)}(N)}[4\beta(n)^{n+1}|\nabla u|^{2}_{g_{0}}+\frac{1}{r^{2}}\beta(n)^{n+1}R_{h_{0}}u^{2}\\
&\ \ \ +\frac{1}{r^{2}}(-(n-1)(n-2))\frac{1}{\beta(n)^{n+1}}]d\vol_{g_{0}}\\
&=\beta(n)^{n+1}\int_{C_{\epsilon(n)}(N)}[-4\Delta_{g_{0}}u\\
&\ \ \ +\frac{1}{r^{2}}(R_{g_{0}}-\beta(n)^{-2n-2}(n-1)(n-2))u]u\, d\vol_{g_{0}}\\
&\geq \beta(n)^{n+1}C_{1}\|u\|_{H^{1}(C_{\epsilon(n)}(N))}
\end{align*}
The last inequality follows the same argument as in Theorem $\ref{Semi-boundednessOnCones}$, i.e. for any $u\in C^{\infty}_{0}((0,\epsilon(n))\times N)$,
\begin{equation}\label{postivity on asymptotic cone}
(Lu,u)_{L^{2}(C_{\epsilon(n)}(N))}\geq \beta(n)^{n+1}C_{1}\|u\|_{H^{1}(C_{\epsilon(n)}(N))}
\end{equation}

We cover the manifold $M$ by the conical neighborhood $(0,\epsilon(n))\times N$ of singularity $p$ and the interior part $M_{0}=M\backslash
C_{(0,\frac{1}{8}\epsilon(n))}(N)$. We construct a partition of unity subordinate to this covering as following. Let $\rho_{1}$ a function on $C_{\epsilon}(N)$ satisfying
\begin{equation*}
\rho_{1}(r,x)=
\begin{cases}
1, & 0<r<\frac{\epsilon(n)}{4}\\ 0, & r>\frac{\epsilon(n)}{2},
\end{cases}
\end{equation*}
with $0\leq\rho_{1}(r,x)\leq1$. We extend $\rho_{1}$ trivially to the whole $M$, i.e. set $\rho_{1}|_{M\setminus C_{\epsilon(n)}(N)}\equiv0$, and we still use $\rho_{1}$ to denote the extended function. Let $\rho_{2}=1-\rho_{1}$. Then $\{\rho_{1},\rho_{2}\}$ is a partition of unity subordinate to the covering $\{C_{\epsilon(n)(N)}, M_{0}\}$.

For any $u\in C^{\infty}_{0}(M)$,
\begin{align*}
(L_{B}u,u)
&=\int_{M}(L_{B}u_{1}+L_{B}u_{2})(u_{1}+u_{2})d\vol_{g}\\
&=\int_{M}(L_{B}u_{1})u_{1}dvol_{g}+\int_{M}(L_{B}u_{1})u_{2}d\vol_{g}\\
&\ \ \ +\int_{M}(L_{B}u_{2})u_{1}dvol_{g}+\int_{M}(L_{B}u_{2})u_{2}d\vol_{g},
\end{align*}
where $u_{1}=\rho_{1}u$, $u_{2}=\rho_{2}u$, and $L_{B}=L+B$ for some $B>0$.

By $(\ref{postivity on asymptotic cone})$, we have
\begin{align*}
\int_{M}(L_{B}u_{1})u_{1}d\vol_{g} & \geq \beta(n)^{n}C_{1}\int_{M}(\chi^{2}|u_{1}|^{2}+|\nabla u_{1}|^{2})d\vol_{g},
\end{align*}
where $C_{1}$ is a positive constant.

Because $u_{2}$ is compactly supported in $M_{0}$ and $R$ is bounded on $\overline{M_{0}}$, i.e.  there exists $C_{2}<0$ such that $R>C_{2}$ on $M_{0}$, we have
\begin{align*}
\int_{M}(L_{B}u_{2})u_{2}d\vol_{g} & =\int_{M_{0}}(-4\Delta u_{2}+(R+B)u_{2})u_{2}d\vol_{g}\\
                              & =\int_{M_{0}}(4|\nabla u_{2}|^{2}+(R+B)|u_{2}|^{2})d\vol_{g}\\
                              & \geq C_{2}\int_{M_{0}}(|\nabla u_{2}|^{2}+\chi^{2}|u_{2}|^{2})d\vol_{g}\\
\end{align*}

By integration by parts,
\begin{align*}
\int_{M}(L_{B}u_{1})u_{2}d\vol_{g}
&=\int_{M}(L_{B}u_{2})u_{1}d\vol_{g}\\
& =\int_{M}\langle\nabla u_{1},\nabla u_{2}\rangle d\vol_{g}+\int_{M}(R+B)u_{1}u_{2}d\vol_{g}\\
& =\int_{M}\langle u\nabla\rho_{1}+\rho_{1}\nabla u,u\nabla\rho_{2}+\rho_{2}\nabla u\rangle d\vol_{g}\\
&\ \ \ +\int_{M}(R+B)u_{1}u_{2}d\vol_{g}\\
& =\int_{M}u^{2}(\partial_{r}\rho_{1})(\partial_{r}\rho_{2})d\vol_{g} +\int_{C_{\epsilon}(N)}u\rho_{2}(\partial_{r}\rho_{1})(\partial_{r}u)d\vol_{g}\\
& \ \ \  +\int_{C_{\epsilon}(N)}u\rho_{1}(\partial_{r}\rho_{2})(\partial_{r}u)d\vol_{g}+\int_{C_{\epsilon}(N)}\rho_{1}\rho_{2}|\nabla u|^{2}d\vol_{g}\\
&\ \ \ +\int_{M}(R+B)u_{1}u_{2}d\vol_{g}.
\end{align*}

Then we have
$$\int_{M}u^{2}(\partial_{r}\rho_{1})(\partial_{r}\rho_{2})dvol_{g}>C_{3}\int_{M}u^{2}d\vol_{g},$$
for some negative constant $C_{3}$, and 
\begin{align*}
\int_{C_{\epsilon}(N)}u\rho_{2}(\partial_{r}\rho_{1})(\partial_{r}u)d\vol_{g}
& =\int^{\epsilon}_{0}\int_{N}u\rho_{2}(\partial_{r}\rho_{1})(\partial_{r}u)r^{n-1}d\vol_{h_{r}}dr\\
& =-\frac{1}{2}\int^{\epsilon}_{0}\int_{N}(\partial_{r}\rho_{2})(\partial_{r}\rho_{1})u^{2}r^{n-1}d\vol_{h_{r}}dr\\
& \ \ \ -\frac{1}{2}\int^{\epsilon}_{0}\int_{N}\rho_{2}(\partial^{2}_{r}\rho_{1})u^{2}r^{n-1}d\vol_{h_{r}}dr\\
& \ \ \ -\frac{1}{2}\int^{\epsilon}_{0}\int_{N}u^{2}\frac{\rho_{2}(\partial_{r}\rho_{1})}{r}(n-1)r^{n-1}d\vol_{h_{r}}dr\\
& \ \ \ -\frac{1}{2}\int^{\epsilon}_{0}\int_{N}u^{2}\rho_{2}(\partial_{r}\rho_{1})tr(h^{-1}_{r}\frac{\partial}{\partial r}h_{r})r^{n-1}d\vol_{h_{r}}dr\\
& >C_{4}\int_{M}u^{2}d\vol_{g},\\
\end{align*}
for some negative constant $C_{4}$.

Similarly, we have
$$\int_{C_{\epsilon}(N)}u\rho_{1}(\partial_{r}\rho_{2})(\partial_{r}u)d\vol_{g}>C_{5}\int_{M}u^{2} d\vol_{g},$$
for some negative constant $C_{5}$.

Thus
\begin{align*}
\int_{M}(L_{B}u_{1})u_{2}d\vol_{g}
&>\int_{M}(\rho_{1}\rho_{2}|\nabla u|^{2}+(R+B)u_{1}u_{2})d\vol_{g}\\
&\ \ \ +(C_{3}+C_{4}+C_{5})\int_{M}u^{2}d\vol_{g}\\
&>\int_{M}(\rho_{1}\rho_{2}|\nabla u|^{2}+u_{1}u_{2})d\vol_{g}\\
&\ \ \ +C_{6}\int_{M}u^{2}d\vol_{g},
\end{align*}
where, $C_{6}=C_{3}+C_{4}+C_{5}$

In the same way one also derives 
\begin{align*}
(u_{1},u_{2})_{H^{1}_{2}(M)} &<C_{7}\int_{M}(\rho_{1}\rho_{2}|\nabla u|^{2}+u_{1}u_{2})d\vol+C_{8}\int_{M}u^{2}d\vol_{g},
\end{align*}
for some $C_{7}>0$, such that $\frac{1}{C_{7}}<\beta(n)^{n}C_{1},C_{2}$, and $C_{8}>0$

Thus
$$\int_{M}(L_{B}u_{1})u_{2}d\vol_{g}>\frac{1}{C_{7}}(u_{1},u_{2})_{H^{1}_{2}(M)}+(C_{6}-\frac{C_{8}}{C_{7}})\int_{M}u^{2}d\vol_{g},$$
and therefore,
$$\int_{M}(L_{B}u)ud\vol_{g}>\frac{1}{C_{7}}(u,u)_{H^{1}_{2}(M)}+2(C_{6}-\frac{C_{8}}{C_{7}})\int_{M}u^{2}d\vol_{g}.$$

Let $A=B+2(\frac{C_{8}}{C_{7}}-C_{6})$, then we have
$$\int_{M}(L_{A}u)u\, d\vol_{g}>\frac{1}{C_{7}}(u,u)_{H^{1}_{2}(M)}.$$

In particular,
$$(L_{A}u,u)_{L^{2}}>\frac{1}{C_{7}}\|u\|^{2}_{L^{2}},$$
i.e.  $(L_{A},Dom(L_{A})=C^{\infty}_{0}(M))$ is strictly positive.
\end{proof}

By Theorem $\ref{compactness on manifolds}$, Theorem $\ref{Semi-boundedness}$, and applying the similar argument as in the proof of Theorem $\ref{spectrum on cones}$ to the operator $L_{A}=-4\Delta_{g}+R_{g}+A$, we obtain the following spectrum property for $-4\Delta_{g}+R_{g}$, as the spectrum of $-4\Delta_{g}+R_{g}$ is a constant shift of the spectrum of $L_{A}$.
\begin{thm}\label{SpectrumProperty}
Let $(M,g,p)$ be a compact Riemannian manifold with a single conical singularity $p$. If the scalar curvature $R_{h_{0}}$ on $(N^{n-1},h_{0})$ satisfies $R_{h_{0}}>(n-2)$, then the spectrum of the Friedrichs extension of the operator $-4\Delta+R$ on $(M,g,p)$ consists of discrete eigenvalues with finite multiplicity
$$\lambda_{1}\leq\lambda_{2}\leq\lambda_{3}\leq\cdots,$$
and $\lambda_{k}\rightarrow+\infty$. And the corresponding eigenfunctions $\{\varphi_{i}\}^{\infty}_{i=1}$ form a complete basis of $L^{2}(M)$. Moreover, each eigenfunction $\varphi_{i} \in C^{\infty}(M\setminus\{p\})$ and satisfied the usual eigenvalue equation on $M\setminus\{p\}$.
\end{thm}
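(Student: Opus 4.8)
The plan is to reproduce the argument of Theorem~\ref{spectrum on cones} verbatim at the level of abstract operator theory, now using the global estimates on $M$ in place of the cone estimates, and then to transfer the spectral conclusion from a compact self-adjoint resolvent back to the Schr\"odinger operator by undoing the constant shift. First I would invoke Theorem~\ref{Semi-boundedness} to fix a constant $A$ large enough that the shifted operator $L_A = -4\Delta_g + R_g + A$, with domain $C^\infty_0(M\setminus\{p\})$, is symmetric, strictly positive, and satisfies the coercive bound $(L_A u, u)_{L^2(M)} \geq C\|u\|_{H^1(M)}^2$. As cited in Corollary~\ref{Friedrichs extension on cones}, the Neumann/Friedrichs construction then produces a self-adjoint strictly positive extension $\widetilde{L}_A$ which is injective and satisfies $\mathrm{Ran}(\widetilde{L}_A) = L^2(M)$. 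The coercive bound is what forces the Friedrichs domain into $H^1(M)$: the completion of $C^\infty_0(M\setminus\{p\})$ under the form norm $\|u\|_{L_A}^2 = (L_A u, u)_{L^2}$ embeds continuously into $H^1(M)$ precisely by that inequality, and $\mathrm{Dom}(\widetilde{L}_A)$ is contained in this completion.

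Next I would form the inverse $\widetilde{L}_A^{-1}: L^2(M) \to \mathrm{Dom}(\widetilde{L}_A) \hookrightarrow H^1(M) \hookrightarrow L^2(M)$. This operator is self-adjoint and bounded, and it is \emph{compact} because the last embedding $H^1(M)\hookrightarrow L^2(M)$ is compact by Theorem~\ref{compactness on manifolds}. The spectral theorem for self-adjoint compact operators then yields an orthonormal basis of $L^2(M)$ of eigenfunctions of $\widetilde{L}_A^{-1}$, with eigenvalues of finite multiplicity accumulating only at $0$; inverting these eigenvalues gives $\widetilde{L}_A$ a discrete spectrum $\nu_1 \leq \nu_2 \leq \cdots \to +\infty$ with the same eigenfunctions forming a complete basis. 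Finally, since $-4\Delta_g + R_g = L_A - A$ differs from $L_A$ only by an additive constant, the two quadratic forms have identical form domains, so the Friedrichs extension of $-4\Delta_g + R_g$ is exactly $\widetilde{L}_A - A$; it has the same eigenfunctions and eigenvalues $\lambda_k = \nu_k - A$, and therefore inherits discreteness, finite multiplicities, completeness, and $\lambda_k \to +\infty$.

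The regularity assertion is the last, essentially separate, step. Each eigenfunction $\varphi_i$ lies in $\mathrm{Dom}(\widetilde{L}) \subset H^1(M)$ and satisfies the eigenvalue equation in the weak sense away from $p$; on the regular part $M\setminus\{p\}$ the metric is smooth and $-4\Delta_g + R_g$ is a genuine second-order elliptic operator with smooth coefficients, so interior elliptic bootstrapping upgrades $\varphi_i$ to $C^\infty(M\setminus\{p\})$ and shows it solves the equation classically there. I expect the only delicate point — and it is already absorbed into the cited construction — to be the verification that the abstract Friedrichs domain is captured inside $H^1(M)$, since this is exactly what makes the resolvent compact via Theorem~\ref{compactness on manifolds}; once that inclusion is in hand, the remainder is a direct transcription of the cone argument to the manifold.
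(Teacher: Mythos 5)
Your proposal is correct and follows essentially the same route as the paper: the paper proves this theorem exactly by combining the coercivity estimate of Theorem~\ref{Semi-boundedness} with the compact embedding of Theorem~\ref{compactness on manifolds}, repeating the Friedrichs-extension/compact-resolvent argument of Theorem~\ref{spectrum on cones} for $L_A$, undoing the constant shift $A$, and deducing the smoothness of eigenfunctions on $M\setminus\{p\}$ from standard interior elliptic regularity. Your added remark that the form domains of $L_A$ and $-4\Delta_g+R_g$ coincide (so the Friedrichs extensions genuinely differ by the constant $A$) is a worthwhile explicit justification of the shift step that the paper leaves implicit.
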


The last part is an easy consequence of the standard elliptic regularity theory for weak solutions.
\end{section}


\begin{section}{Min-max principle for eigenvalues of $-4\Delta+R+A$}

The first eigenvalue of the Laplacian on compact manifolds (with certain boundary condition if the boundary is not empty) can be characterized as the infimum of the Rayleigh quotient over the space of admissible functions. And other eigenvalues can be characterized similarly. This is usually known as the min-max principle of eigenvalues of the Laplacian. The min-max principle is a fundamental tool for studying eigenvalue and eigenfunction problems of the Laplacian. For example, the min-max principle is a crucial ingredient for the proof of Courant's nodal domain theorem for eigenfuctions of the Laplacian, see e.g. \cite{Cha} and \cite{SY94}. As an immediate consequence of the Courant's nodal domain theorem, the first eigenvalue of the Laplacian is simple.

The key to establish the min-max principle for an eigenvalue problem is to choose a suitable space of admissible functions. According to the estimate for $L_{A}$ in Theorem $\ref{Semi-boundedness}$, it turns out that the weighted Sobolev space $H^{1}(M)$ is the space of admissible functions for establishing the min-max principle for eigenvalues of the operator $L_{A}$ obtained in Theorem $\ref{SpectrumProperty}$.

Define the Dirichlet energy for the operator $L_{A}$ as the symmetric bilinear form
\begin{equation}
D(\phi, \psi)=\int_{M}\langle\nabla\phi, \nabla\psi\rangle d\vol_{g}+\int_{M}(R+A)\phi\psi d\vol_{g},
\end{equation}
for all $\phi$ and $\psi$ in $C^{\infty}_{0}(M)$.

By definition, it is easy to see that
\begin{equation}\label{DirichletEnergyEstimate}
|D(\phi, \psi)|\leq C(g, A)\|\phi\|_{H^{1}(M)}\|\psi\|_{H^{1}(M)},
\end{equation}
for any $\phi$ and $\psi$ in $C^{\infty}_{0}(M)$, and some constant $C(g, A)$ only depending on the metric $g$ and constant $A$. Thus, the symmetric bilinear form $D$ can be extended to $H^{1}(M)$, and we still use the notation $D(\phi, \psi)$ for $\phi$ and $\psi$ in $H^{1}(M)$.

Clearly,
\begin{equation}\label{AdmissibleEquality}
D(\phi, \psi)=(L_{A}\phi, \psi)_{L^{2}(M)},
\end{equation}
for all $\phi$ and $\phi$ in $C^{\infty}_{0}(M)$. Then, according to the estimate $(\ref{DirichletEnergyEstimate})$, the equality (\ref{AdmissibleEquality}) holds for all $\phi\in C^{\infty}_{0}(M)$ and $\psi\in H^{1}(M)$. And we can extend the equality $(\ref{AdmissibleEquality})$ further as the following.
\begin{lem}\label{AdmissibleEqualityLemma}
\begin{equation}
D(\phi, \psi)=(\widetilde{L_{A}}\phi, \psi)_{L^{2}(M)},
\end{equation}
for all $\phi\in Dom(\widetilde{L_{A}})$ and $\psi\in H^{1}(M)$, where $\widetilde{L_{A}}$ is the Friderichs extension of $L_{A}$.
\end{lem}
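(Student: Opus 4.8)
The plan is to recognize the asserted identity as essentially the defining property of the Friedrichs extension, once its form domain has been correctly identified. Recall from the construction employed in Corollary \ref{Friedrichs extension on cones} (the Neumann theorem of \cite{EK}), applied now on $M$, that $\widetilde{L_A}$ is the self-adjoint operator associated with the closure of the symmetric form $u\mapsto (L_A u, u)_{L^2}$, $u\in C^{\infty}_0(M\setminus\{p\})$. By the representation theorem for closed, strictly positive forms, its domain is
\[
Dom(\widetilde{L_A})=\{\phi\in\mathcal{Q} : \exists\, f\in L^2(M)\ \text{with}\ D(\phi,\psi)=(f,\psi)_{L^2}\ \forall\psi\in\mathcal{Q}\},
\]
with $\widetilde{L_A}\phi=f$, where $\mathcal{Q}$ denotes the form domain, i.e. the completion of $C^{\infty}_0(M\setminus\{p\})$ in the form norm $\|u\|_{L_A}:=D(u,u)^{1/2}$. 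Thus once we prove $\mathcal{Q}=H^1(M)$, the lemma is immediate, since the displayed identity then reads exactly $D(\phi,\psi)=(\widetilde{L_A}\phi,\psi)_{L^2}$ for all $\psi\in H^1(M)$.

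To identify $\mathcal{Q}$, I would establish that the form norm is equivalent to the $H^1(M)$ norm on $C^{\infty}_0(M\setminus\{p\})$. On the one hand, the strict positivity estimate of Theorem \ref{Semi-boundedness} gives, using the coincidence $D(u,u)=(L_A u,u)_{L^2}$ from (\ref{AdmissibleEquality}),
\[
D(u,u)=(L_A u,u)_{L^2}\geq C\|u\|^2_{H^1(M)};
\]
on the other hand, (\ref{DirichletEnergyEstimate}) furnishes the reverse bound $D(u,u)\leq C(g,A)\|u\|^2_{H^1(M)}$. Hence $\|\cdot\|_{L_A}$ and $\|\cdot\|_{H^1(M)}$ are equivalent on $C^{\infty}_0(M\setminus\{p\})$, and their completions coincide, so $\mathcal{Q}=H^1(M)$. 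The same estimate (\ref{DirichletEnergyEstimate}) also shows that the extended form $D$ on $H^1(M)$ is precisely the closure of $(L_A\cdot,\cdot)_{L^2}$: both are continuous on $H^1(M)$ and agree on the dense subspace $C^{\infty}_0(M\setminus\{p\})$, hence agree everywhere.

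The main obstacle, and the reason the lemma is not a mere restatement of the already-established equality (\ref{AdmissibleEquality}) for $\phi\in C^{\infty}_0(M)$ and $\psi\in H^1(M)$, is that one cannot simply pass to the limit along an $H^1$-approximating sequence $\phi_k\in C^{\infty}_0(M\setminus\{p\})\to\phi$. Indeed, $D(\phi_k,\psi)\to D(\phi,\psi)$ by the continuity (\ref{DirichletEnergyEstimate}), but $(L_A\phi_k,\psi)_{L^2}=D(\phi_k,\psi)$ would converge to $(\widetilde{L_A}\phi,\psi)_{L^2}$ only if $L_A\phi_k\to\widetilde{L_A}\phi$ in $L^2$, i.e. if the convergence were in the graph norm of $\widetilde{L_A}$ rather than merely in $H^1$; the unboundedness of $\widetilde{L_A}$ obstructs this. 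It is precisely to bypass this gap that one invokes the form-theoretic characterization of the Friedrichs extension, whose form domain we have identified with $H^1(M)$, whence the conclusion follows.
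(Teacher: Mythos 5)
Your proof is correct, but it takes a genuinely different route from the paper's. The paper works directly with the explicit construction of the Friedrichs extension given by the Neumann theorem in \cite{EK}, namely $\widetilde{L_{A}}=L_{A}^{*}|_{\overline{C^{\infty}_{0}(M)}\cap Dom(L_{A}^{*})}$: for $\phi\in Dom(\widetilde{L_{A}})$ and $\psi\in C^{\infty}_{0}(M)$ it computes $(\widetilde{L_{A}}\phi,\psi)_{L^{2}}=(L_{A}^{*}\phi,\psi)_{L^{2}}=(\phi,L_{A}\psi)_{L^{2}}=D(\phi,\psi)$, using the definition of the adjoint and the already-established case of (\ref{AdmissibleEquality}) on $C^{\infty}_{0}(M)\times H^{1}(M)$, and then extends to all $\psi\in H^{1}(M)$ by density together with the continuity estimate (\ref{DirichletEnergyEstimate}). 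You instead identify the form domain $\mathcal{Q}$ with $H^{1}(M)$ via the two-sided norm equivalence furnished by Theorem \ref{Semi-boundedness} and (\ref{DirichletEnergyEstimate}), and then invoke the first representation theorem for closed coercive forms, from which the identity is immediate. Your route is conceptually cleaner and makes explicit why $H^{1}(M)$ is exactly the right space of test functions, but it leans on two standard facts the paper does not need to cite: the representation theorem itself, and the coincidence of the \cite{EK} construction of the Friedrichs extension with the operator associated to the closure of the quadratic form. The paper's argument is more elementary, using only the definition of the adjoint plus a density argument, at the cost of leaving the role of the form domain implicit. Your closing observation that one cannot simply pass to the limit in $\phi$ along an $H^{1}$-approximating sequence (since that would require graph-norm convergence) is accurate, and is precisely why both proofs must route either through the adjoint or through the form.
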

\begin{proof}
According to the construction of the Friedrichs extension $\widetilde{L_{A}}$ (see, e.g., the proof of the Neumann theorem in \cite{EK}),
\begin{eqnarray*}
Dom(\widetilde{L_{A}}) &\equiv& \overline{C^{\infty}_{0}(M)}\cap Dom(L^{*}_{A})\subset H^{1}(M)\\
\widetilde{L_{A}} &\equiv& L^{*}_{A}|_{Dom(\widetilde{L_{A}})},
\end{eqnarray*}
where $\overline{C^{\infty}_{0}(M)}$ is the completion of $C^{\infty}_{0}(M)$ with respect to the norm defined by $\|\phi\|^{2}_{L_{A}}=(L_{A}\phi, \phi)_{L^{2}(M)}$.

For any $\phi\in Dom(\widetilde{L_{A}})\subset H^{1}(M)$, and $\psi\in C^{\infty}_{0}(M)$,
\begin{equation}
(\widetilde{L_{A}}\phi, \psi)_{L^{2}(M)}=(L^{*}_{A}\phi, \psi)_{L^{2}(M)}=(\phi, L_{A}\psi)_{L^{2}(M)}=D(\phi, \psi).
\end{equation}
Then, by combining with the estimate $(\ref{DirichletEnergyEstimate})$, we complete the proof.
\end{proof}

Lemma $\ref{AdmissibleEqualityLemma}$ tells us that $H^{1}(M)$ is the space of admissible functions for the eigenvalue problem of $L_{A}$, and enables us to establish the following min-max principle for eigenvalues of $L_{A}$.
\begin{thm}\label{Min-maxPrinciple}
Let $0<\lambda_{1}\leq\lambda_{2}\leq\cdots$, and $\lambda_{k}\rightarrow+\infty$ as $k\rightarrow+\infty$, be eigenvalues of $L_{A}$. We can choose eigenfuctions $u_{i}$ such that $L_{A}u_{i}=\lambda_{i}u_{i}$ and $\{u_{1}, u_{2}, u_{3}, \cdots\}$ form a complete orthonormal basis of $L^{2}(M)$. Then
\begin{eqnarray*}
\lambda_{1} &=& \inf\{\frac{D(u,u)}{\|u\|_{L^{2}(M)}} \mid u\in H^{1}(M)\},\\
\lambda_{i} &=& \inf\{\frac{D(u,u)}{\|u\|_{L^{2}(M)}} \mid u\in H^{1}(M), \ \  (u, u_{j})_{L^{2}(M)}=0, \ \ j=1, \cdots, i-1\},
\end{eqnarray*}
for all $i=2, 3, 4, \cdots$.

Moreover, a function $u\in H^{1}(M)$ realizes the above infimum for $\lambda_{i}$ if and only if $u$ is an eigenfunction of $L_{A}$ with the eigenvalue $\lambda_{i}$.
\end{thm}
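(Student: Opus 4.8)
The plan is to follow the classical variational argument for eigenvalues of a self-adjoint operator with compact resolvent, using the Dirichlet form $D$ and the admissible space $H^{1}(M)$ supplied by Lemma \ref{AdmissibleEqualityLemma}. The essential infrastructure is already in place: by Theorem \ref{SpectrumProperty} the spectrum is discrete with eigenfunctions $\{u_{i}\}$ forming a complete orthonormal basis of $L^{2}(M)$, and by Lemma \ref{AdmissibleEqualityLemma} we have $D(\phi,\psi)=(\widetilde{L_{A}}\phi,\psi)_{L^{2}(M)}$ for all $\phi\in Dom(\widetilde{L_{A}})$ and $\psi\in H^{1}(M)$. I would first prove the \emph{upper bound} directions, which are the easy inclusions: since each $u_{i}\in Dom(\widetilde{L_{A}})\subset H^{1}(M)$ satisfies $D(u_{i},u_{i})=(\widetilde{L_{A}}u_{i},u_{i})_{L^{2}}=\lambda_{i}\|u_{i}\|_{L^{2}}^{2}$ and $(u_{i},u_{j})_{L^{2}}=0$ for $j<i$, the function $u_{i}$ is an admissible competitor in the variational problem for $\lambda_{i}$, so the infimum is $\leq\lambda_{i}$.

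For the \emph{lower bound}, I would take any admissible $u\in H^{1}(M)$ (orthogonal to $u_{1},\dots,u_{i-1}$ in the case $i\geq 2$) and expand it in the complete orthonormal basis, $u=\sum_{k\geq i} c_{k}u_{k}$, where the sum starts at $k=i$ precisely because of the orthogonality constraints. The key identity to establish is that $D(u,u)=\sum_{k\geq i}\lambda_{k}|c_{k}|^{2}$ and $\|u\|_{L^{2}}^{2}=\sum_{k\geq i}|c_{k}|^{2}$; since $\lambda_{k}\geq\lambda_{i}$ for all $k\geq i$, this immediately yields $D(u,u)\geq\lambda_{i}\|u\|_{L^{2}}^{2}$, giving the reverse inequality. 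Combining the two directions establishes the min-max formulas. Finally, for the characterization of minimizers, I would argue that if $u$ realizes the infimum for $\lambda_{i}$ then equality forces $c_{k}=0$ whenever $\lambda_{k}>\lambda_{i}$, so $u$ lies in the $\lambda_{i}$-eigenspace; conversely any such eigenfunction clearly achieves the value $\lambda_{i}$ by the computation above.

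The main obstacle is justifying the spectral expansion of the Dirichlet form $D$, namely verifying that for a general $u\in H^{1}(M)$ (not merely $u\in Dom(\widetilde{L_{A}})$) one has the convergent expansion $D(u,u)=\sum_{k}\lambda_{k}|c_{k}|^{2}$. The subtlety is that $D$ is only the \emph{form} associated to the Friedrichs extension, so I cannot directly write $D(u,u)=(\widetilde{L_{A}}u,u)$ for arbitrary $u\in H^{1}(M)$. The clean way around this is to pass through the square root $\widetilde{L_{A}}^{1/2}$: the form domain of $\widetilde{L_{A}}$ is exactly $Dom(\widetilde{L_{A}}^{1/2})$, and one identifies this form domain with $H^{1}(M)$ using the two-sided estimate in Theorem \ref{Semi-boundedness} together with the bound \eqref{DirichletEnergyEstimate}, so that $D(u,u)=\|\widetilde{L_{A}}^{1/2}u\|_{L^{2}}^{2}=\sum_{k}\lambda_{k}|c_{k}|^{2}$ holds for all $u\in H^{1}(M)$ by the spectral theorem applied to $\widetilde{L_{A}}^{1/2}$. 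Once this identification of the form domain with $H^{1}(M)$ is secured—which is really the content of the semi-boundedness estimate combined with the continuity bound on $D$—the remaining steps are the routine Hilbert-space computations sketched above.
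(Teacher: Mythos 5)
Your proposal is correct and is precisely the standard variational argument that the paper intends here (the paper states the theorem without writing out a proof, relying on Lemma \ref{AdmissibleEqualityLemma} and the preceding estimates to make the classical argument go through). You have also correctly identified and resolved the one genuine subtlety --- that $D(u,u)=\sum_{k}\lambda_{k}|c_{k}|^{2}$ must be justified for all $u\in H^{1}(M)$ via the identification of $H^{1}(M)$ with the form domain of $\widetilde{L_{A}}$, which follows from the two-sided norm equivalence given by Theorem \ref{Semi-boundedness} and the bound (\ref{DirichletEnergyEstimate}).
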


Recall, the nodal set of a continuous function $u: M^{n}\rightarrow\mathbb{R}$ is the set $u^{-1}(0)$, and a nodal domain of $u$ is a connected component of $\overline{M}\setminus u^{-1}(0)$.

In general, the nodal sets could be very strange. However, S. Y. Cheng's result for local behavior of nodal sets says that the nodal set of a solution to a elliptic equation $(\Delta+h(x))u=0$, where $h\in C^{\infty}(M^{n})$, is a $(n-1)$-dimensional smooth manifold after removing a lower dimensional closed set (see, e.g. Theorem 6.1 in Chapter 3 of \cite{SY94}).

By Theorem $\ref{Min-maxPrinciple}$ and S. Y. Cheng's result, and applying the argument in the proof of Theorem 6.2 in Chapter 3 of \cite{SY94}, we can obtain the following Courant's nodal domain theorem for eigenfunction of $L_{A}$.
\begin{thm}
Let $\lambda_{i}$ be the $i$-th eigenvalue of $L_{A}$, and $u_{i}$ be a corresponding eigenfuction. Then the number of nodal domains of $u_{i}$ is less than or equal to $i$.
\end{thm}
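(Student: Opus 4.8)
The plan is to adapt the classical proof of Courant's nodal domain theorem (as in Theorem 6.2, Chapter 3 of \cite{SY94}) to our singular setting, relying on the min-max characterization of eigenvalues in Theorem \ref{Min-maxPrinciple} and on S. Y. Cheng's local structure theorem for nodal sets. Let $u_{i}$ be an eigenfunction for $\lambda_{i}$ and suppose, for contradiction, that $u_{i}$ has at least $i+1$ nodal domains $\Omega_{1}, \dots, \Omega_{i+1}$. The idea is to build a test function supported on the first $i$ of these nodal domains that is orthogonal to $u_{1}, \dots, u_{i-1}$, and to show it would have Rayleigh quotient $\leq \lambda_{i}$, thereby forcing it to be an eigenfunction for $\lambda_{i}$; the contradiction then comes from unique continuation, since such an eigenfunction vanishes on the open set $\Omega_{i+1}$.

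First I would set, for each $j = 1, \dots, i$, the truncation $v_{j} = u_{i}$ on $\Omega_{j}$ and $v_{j} = 0$ elsewhere, and verify that each $v_{j}$ lies in $H^{1}(M)$. This is the first place the conical singularity demands care: a nodal domain may have the singular point $p$ on its boundary or in its interior, so I must check that multiplying $u_{i}$ by the characteristic function of $\Omega_{j}$ produces a function with finite weighted $H^{1}(M)$-norm. Since $u_{i} \in Dom(\widetilde{L_{A}}) \subset H^{1}(M)$ and the cutoff by a nodal domain introduces no new derivatives (the gradient of $v_{j}$ equals that of $u_{i}$ a.e. on $\Omega_{j}$ and vanishes off $\overline{\Omega_{j}}$), the weighted norm of $v_{j}$ is bounded by that of $u_{i}$, so $v_{j} \in H^{1}(M)$. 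Next, consider the linear combination $w = \sum_{j=1}^{i} c_{j} v_{j}$; the $i$ coefficients $c_{j}$ give enough freedom to impose the $i-1$ orthogonality conditions $(w, u_{l})_{L^{2}(M)} = 0$ for $l = 1, \dots, i-1$, yielding a nontrivial $w \in H^{1}(M)$ admissible in the variational characterization of $\lambda_{i}$.

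The computational heart is to show $D(w,w) = \lambda_{i}\|w\|_{L^{2}(M)}^{2}$. Using Lemma \ref{AdmissibleEqualityLemma} together with the fact that $\widetilde{L_{A}} u_{i} = \lambda_{i} u_{i}$ and that the $v_{j}$ have disjoint supports, one computes $D(v_{j}, v_{k}) = \lambda_{i}(v_{j}, v_{k})_{L^{2}(M)}$ for each pair, by testing the eigenvalue equation against $v_{j}$ on $\Omega_{j}$ where $v_{j}$ agrees with $u_{i}$. Summing gives $D(w,w) = \lambda_{i}\|w\|_{L^{2}(M)}^{2}$, so $w$ realizes the infimum for $\lambda_{i}$; by the last clause of Theorem \ref{Min-maxPrinciple}, $w$ is itself an eigenfunction of $L_{A}$ with eigenvalue $\lambda_{i}$. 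But $w$ vanishes identically on the nonempty open set $\Omega_{i+1}$, and by Cheng's result the nodal set of $w$ on $M \setminus \{p\}$ is, away from a lower-dimensional set, a smooth hypersurface, so $w$ cannot vanish on an open set unless $w \equiv 0$ on each connected component of $M \setminus \{p\}$ meeting $\Omega_{i+1}$ — contradicting $w \not\equiv 0$.

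The main obstacle I anticipate is the role of the singular point $p$ in the unique continuation step. Cheng's theorem and the standard strong unique continuation property apply on the smooth part $M \setminus \{p\}$, where each $u_{i} \in C^{\infty}(M \setminus \{p\})$ solves a genuine elliptic equation by Theorem \ref{SpectrumProperty}; the delicate point is ensuring the argument is not obstructed by $p$ disconnecting $M \setminus \{p\}$ or by $w$ being forced to vanish only on a component not adjacent to the relevant nodal domains. I would handle this by noting that $M \setminus \{p\}$ is connected (since $p$ is a single point and $n \geq 3$, removing a point does not disconnect the manifold), so that $w \equiv 0$ on the open set $\Omega_{i+1}$ propagates by strong unique continuation across $M \setminus \{p\}$ to force $w \equiv 0$ everywhere, completing the contradiction. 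A secondary technical point, already implicit in the test-function construction, is justifying the integration by parts in computing $D(v_{j}, v_{k})$ across nodal boundaries touching $p$; this is where the asymptotic control furnished by $u_{i} \in H^{1}(M)$ and the weighted structure guarantee no boundary contribution arises at the singularity.
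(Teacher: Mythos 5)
Your proposal is correct and follows essentially the same route the paper intends: the paper gives no written proof but explicitly defers to the min-max principle (Theorem \ref{Min-maxPrinciple}), Cheng's structure theorem for nodal sets, and the classical Courant argument of Theorem 6.2, Chapter 3 of \cite{SY94}, which is precisely what you carry out. Your added care about membership of the truncations in the weighted space $H^{1}(M)$, the use of Lemma \ref{AdmissibleEqualityLemma} to compute $D(v_{j},v_{k})$ without boundary terms at $p$, and the connectedness of $M\setminus\{p\}$ for the unique continuation step are exactly the points the paper leaves implicit.
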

\begin{cor}
The first eigenvalue of $L_{A}$ is simple.
\end{cor}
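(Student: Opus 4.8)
The plan is to deduce simplicity of $\lambda_{1}$ directly from the Courant nodal domain theorem just established, combined with the strong maximum principle (Harnack inequality) on the regular part $M\setminus\{p\}$.

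First I would record the sign properties of a first eigenfunction. By the nodal domain theorem, a first eigenfunction $u_{1}$ has at most one nodal domain, hence does not change sign on $M\setminus\{p\}$; after replacing $u_{1}$ by $-u_{1}$ if necessary, I may assume $u_{1}\geq 0$. By Theorem \ref{SpectrumProperty}, $u_{1}\in C^{\infty}(M\setminus\{p\})$ and solves $-4\Delta_{g}u_{1}+(R_{g}+A)u_{1}=\lambda_{1}u_{1}$ on $M\setminus\{p\}$. Since $R_{g}$ is smooth, and in particular locally bounded, away from the conical point, this is a uniformly elliptic equation with locally bounded coefficients on $M\setminus\{p\}$, so the classical Harnack inequality applies on every compact subset of $M\setminus\{p\}$. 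As $M$ is connected and $n\geq 3$, the regular part $M\setminus\{p\}$ is connected as well, so a nonnegative solution that is not identically zero must be strictly positive throughout $M\setminus\{p\}$. Thus $u_{1}>0$ on $M\setminus\{p\}$, and symmetrically any first eigenfunction is either strictly positive or strictly negative on $M\setminus\{p\}$.

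Next I would argue by contradiction. Suppose $\lambda_{1}$ is not simple; then there exist two $L^{2}$-orthonormal eigenfunctions $u_{1},v_{1}$ with eigenvalue $\lambda_{1}$. Both are first eigenfunctions, so by the previous step each has a fixed strict sign on $M\setminus\{p\}$: I may take $u_{1}>0$, while $v_{1}>0$ or $v_{1}<0$ on $M\setminus\{p\}$. In either case the product $u_{1}v_{1}$ has a constant sign and is nowhere zero, so $\int_{M}u_{1}v_{1}\,d\vol_{g}\neq 0$. This contradicts the orthogonality relation $(u_{1},v_{1})_{L^{2}(M)}=0$. Hence $\lambda_{1}$ is simple.

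The only point needing care is the interaction with the singularity, and it causes no difficulty here. The singular point $p$ has measure zero, so the $L^{2}$ inner product is effectively computed over $M\setminus\{p\}$ and receives no contribution from $p$; and all of the maximum-principle reasoning is carried out on compact subsets of the smooth region $M\setminus\{p\}$, where the potential $R_{g}$ is smooth, so the singular behavior $R_{g}\sim r^{-2}$ near $p$ never enters the argument. The main obstacle is therefore only to verify that the strong maximum principle and Harnack inequality are applicable on this noncompact regular region, which follows from the smoothness of the coefficients there together with the connectedness of $M\setminus\{p\}$.
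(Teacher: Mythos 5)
Your proof is correct and follows exactly the route the paper intends: the paper offers no separate argument for this corollary, presenting it as the standard ``immediate consequence'' of the Courant nodal domain theorem just established, which is precisely what you carry out (one nodal domain for a first eigenfunction, strict sign via the strong maximum principle on the connected regular part $M\setminus\{p\}$, contradiction with $L^{2}$-orthogonality). Your added care about applying Harnack only on compact subsets of the smooth region, where the potential is bounded, is the right way to handle the singularity and fills in the details the paper leaves implicit.
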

\end{section}

\begin{section}{Asymptotic behavior of eigenfunctions of $-4\Delta+R$ on compact manifolds with a single cone-like singularity}

\noindent In this section, we obtain an asymptotic expansion for eigenfunctions of $-4\Delta+R$ near the singularity on manifolds with a single cone-like singularity.  The precise geometric structure near the singularity enables us to explicitly express eigenfunction in terms of some hypergeometric functions, and eigenvalues and eigenfunctions of the operator $-4\Delta+R$ on the cross section.

Let $(M^{n},g,p)$ be a compact Riemannian manifold with cone-like singularity $p$, and $U_{p}$ be a neighborhood of $p$ so that $U_{p}\backslash \{p\}$ is diffeomorphic to $(0,\varepsilon)\times N$, and on $U_{p}\backslash \{p\}$, $g=dr^{2}+r^{2}h_{0}$. Let $\mu_{1}<\mu_{2}\leq\mu_{3}\leq\cdots$ be eigenvalues of the operator $-4\Delta_{h_{0}}+R_{h_{0}}$ on the Riemannian manifold $(N,h_{0})$, which is the cross section of the conical neighborhood $U_{p}$ of $p$, and $\psi_{1},\psi_{2},\psi_{3},\cdots$ be corresponding normalized eigenfunctions, i.e.  $\|\psi_{i}\|_{L^{2}}=1$.

By using the classical Sobolev embedding theorem and elliptic regularity, with $s=2n$, we have
\begin{align*}
\|\psi_{i}\|_{L^{\infty}}
&\leq K_{s}\|\psi_{i}\|_{W^{s, 2}}\\
&\leq K_{s}C_{s}(\|\psi_{i}\|_{W^{s-2, 2}}+\|(-4\Delta_{h_{0}}+R_{h_{0}})\psi_{i}\|_{W^{s-2, 2}})\\
&= K_{s}C_{s}(1+|\mu_{i}|)\|\psi_{i}\|_{W^{s-2, 2}}\\
&\cdots\\
&\leq C(1+|\mu_{i}|)^{n}\|\psi_{i}\|_{W^{0, 2}}\\
&=C(1+|\mu_{i}|)^{n}.
\end{align*}
Let $u$ be an eigenfunction of the operator $-4\Delta_{g}+R_{g}$ with eigenvalue $\lambda$, i.e.
\begin{equation}\label{eigenvalue_equation}
-4\Delta_{g}u+R_{g}u=\lambda u.
\end{equation}
On the conical neighborhood $U_{p}\backslash\{p\}$, we do the the following expansion for the eigenfunction $u$:
\begin{equation}\label{eigenfunction_expansion}
u=\sum^{+\infty}_{i=1}u_{i}(r)\psi_{i}(x),
\end{equation}
where $x$ is the coordinate on $N$.

By plugging $(\ref{Lapace_on_cone})$, $(\ref{Scale_curvature_on_cone})$, and $(\ref{eigenfunction_expansion})$ in the equation $(\ref{eigenvalue_equation})$, we have
\begin{align*}
-4\Delta_{g}u+R_{g}u
&=\sum^{+\infty}_{i=1}(-4u^{''}_{i}\psi_{i}-4\frac{n-1}{r}u^{'}_{i}\psi_{i}-4\frac{1}{r^{2}}u_{i}\Delta_{g_{0}}\psi_{i})\\
&\ \ \ +\sum^{+\infty}_{i=1}(\frac{1}{r^{2}}R_{g_{0}}u_{i}\psi_{i}-(n-1)(n-2)\frac{1}{r^{2}}u_{i}\psi_{i})\\
&=\sum^{+\infty}_{i=1}[-4u^{''}_{i}-4\frac{n-1}{r}u^{'}_{i}-\frac{1}{r^{2}}(-\mu_{i}+(n-1)(n-2))u_{i}]\psi_{i}.
\end{align*}
Thus the eigenvalue equation for $u$ translates into the following ODE for each $u_{i}(r)$,
\begin{equation}
-4u^{''}_{i}-4\frac{n-1}{r}u^{'}_{i}-\frac{1}{r^{2}}(-\mu_{i}+(n-1)(n-2))u_{i}=\lambda u_{i}.
\end{equation}
In other words,
\begin{equation}\label{general_Bessel_equation}
u^{''}_{i}+\frac{n-1}{r}u^{'}_{i}+\frac{1}{4}(\lambda-\frac{1}{r^{2}}(\mu_{i}-(n-1)(n-2)))u_{i}=0.
\end{equation}

Now we solve the equation $(\ref{general_Bessel_equation})$ in three cases according to the sign of $\lambda$.

{\em Case 1:} $\lambda=0$. Then equation $(\ref{general_Bessel_equation})$ becomes the following Euler equation.
      \begin{equation}\label{Euler_equation}
      u^{''}_{i}+\frac{n-1}{r}u^{'}_{i}-\frac{1}{4}\frac{1}{r^{2}}[\mu_{i}-(n-1)(n-2)]u_{i}=0
      \end{equation}
     The general solutions are
      \begin{equation}\label{solution for 0 eigenvalue}
      u_{i}(r)=A_{i}r^{-\frac{n-2}{2}+\frac{\sqrt{\mu_{i}-(n-2)}}{2}}+B_{i}r^{-\frac{n-2}{2}-\frac{\sqrt{\mu_{i}-(n-2)}}{2}},
      \end{equation}
      where $A_{i}$ and $B_{i}$ are some constants. From the $L^2$ condition $u=\sum\limits^{+\infty}_{i=0}u_{i}\psi_{i}(x)\in L^{2}(M,g)$,  $u_{i}(r)=A_{i}r^{-\frac{n-2}{2}+\frac{\sqrt{\mu_{i}-(n-2)}}{2}}$ for large $i$, i.e.  $B_{i}=0$ for large $i$.

{\em Case 2:} $\lambda>0$. In this case, the equation transforms   to a Bessel equation.

      Let $$h_{i}(r)=(\frac{\sqrt{\lambda}}{2})^{-\frac{n-2}{2}}r^{\frac{n-2}{2}}u_{i}(\frac{2r}{\sqrt{\lambda}}).$$
      Then
      $$h^{''}_{i}(\frac{\sqrt{\lambda}r}{2})+\frac{1}{\frac{\sqrt{\lambda}r}{2}}h^{'}_{i}(\frac{\sqrt{\lambda}r}{2})+[1-\frac{1}{\frac{\lambda r^{2}}{4}}\frac{1}{4}(\mu_{i}-(n-2))]h_{i}(\frac{\sqrt{\lambda}r}{2})=0.$$
      Thus
      $$h_{i}(\frac{\sqrt{\lambda}r}{2})=A_{i} J_{\frac{1}{2}\sqrt{\mu_{i}-(n-2)}}(\frac{\sqrt{\lambda}r}{2})+ B_{i} Y_{\frac{1}{2}\sqrt{\mu_{i}-(n-2)}}(\frac{\sqrt{\lambda}r}{2}),$$
      where $A_{i}$ and $B_{i}$ are some constants, and $J_{\nu}(z)$ and $Y_{\nu}(z)$ Bessel functions of first and second kind respectively.
      Hence we obtain
      \begin{equation}\label{solution for positive eigenvalues}
      u_{i}(r)=A_{i} r^{-\frac{n-2}{2}}J_{\frac{1}{2}\sqrt{\mu_{i}-(n-2)}}(\frac{\sqrt{\lambda}r}{2})+ B_{i}r^{-\frac{n-2}{2}}Y_{\frac{1}{2}\sqrt{\mu_{i}-(n-2)}}(\frac{\sqrt{\lambda}r}{2}).
      \end{equation}

      Bessel functions have the following asymptotic behavior. If $\nu\rightarrow+\infty$ through real values, with $z\neq0$ fixed, then
      $$J_{\nu}(z)\sim \frac{1}{\sqrt{2\pi\nu}}(\frac{ez}{2\nu})^{\nu},$$
      $$Y_{\nu}(z)\sim -\sqrt{\frac{2}{\pi\nu}}(\frac{ez}{2\nu})^{-\nu}.$$
      Thus as in Case 1, for large $i$, $u_{i}(r)=A_{i} r^{-\frac{n-2}{2}}J_{\frac{1}{2}\sqrt{\mu_{i}-(n-2)}}(\frac{\sqrt{\lambda}r}{2})$, i.e.  $B_{i}=0$.

{\em Case 3:} $\lambda<0$. We use the results in \cite{B53}.

      If $1+\sqrt{\mu_{i}-(n-2)}$ is not an integer, then
      \begin{align}\label{solution for negative eigenvalues1}
      u_{i}=& A_{i}r^{-\frac{n-1}{2}}(\sqrt{-\lambda}r)^{\frac{1+\sqrt{\mu_{i}-(n-2)}}{2}}e^{-\frac{\sqrt{-\lambda}r}{2}}
              \sum^{+\infty}_{k=0}\frac{(\frac{1+\sqrt{\mu_{i}-(n-2)}}{2})_{k}}{(1+\sqrt{\mu_{i}-(n-2)})_{k}}\frac{(\sqrt{-\lambda}r)^{k}}{k!}\\
             & +B_{i}r^{-\frac{n-1}{2}}(\sqrt{-\lambda}r)^{\frac{1-\sqrt{\mu_{i}-(n-2)}}{2}}e^{-\frac{\sqrt{-\lambda}r}{2}}
              \sum^{+\infty}_{k=0}\frac{(\frac{1-\sqrt{\mu_{i}-(n-2)}}{2})_{k}}{(1-\sqrt{\mu_{i}-(n-2)})_{k}}\frac{(\sqrt{-\lambda}r)^{k}}{k!},\notag
      \end{align}
      where $A_{i}$ and $B_{i}$ are some constants, and $(x)_{k}=x(x+1)\cdots(x+n-1)$.\\

      If $1+\sqrt{\mu_{i}-(n-2)}=1+m$ is a positive integer, then
      \begin{align}\label{solution for negative eigenvalues2}
      u_{i}=& A_{i}r^{-\frac{n-1}{2}}(\sqrt{-\lambda}r)^{\frac{m+1}{2}}e^{-\frac{\sqrt{-\lambda}r}{2}}
            \sum^{+\infty}_{k=0}\frac{(\frac{1+m}{2})_{k}}{(1+m)_{k}}\frac{(\sqrt{-\lambda}r)^{k}}{k!}\\
            & +B_{i}r^{-\frac{n-1}{2}}(\sqrt{-\lambda}r)^{\frac{1+m}{2}}e^{-\frac{\sqrt{-\lambda}r}{2}}
            \frac{(-1)^{m-1}}{m!\Gamma(\frac{1-m}{2})}
            \{(\sum^{+\infty}_{k=0}\frac{(\frac{1+m}{2})_{k}}{(1+m)_{k}}\frac{(\sqrt{-\lambda}r)^{k}}{k!})\log(\sqrt{-\lambda}r)\notag\\
            & +\sum^{+\infty}_{k=0}\frac{(\frac{1+m}{2})_{k}}{(1+m)_{k}}(\psi(\frac{1+m}{2}+k)-\psi(1+k)-\psi(1+m+k))\frac{(\sqrt{-\lambda}r)^{k}}{k!}\notag\\
            & +\frac{(m-1)!}{\Gamma(\frac{1+m}{2})}\sum^{m-1}_{k=0}\frac{(\frac{1-m}{2})_{k}}{(1-m)_{k}}\frac{(\sqrt{-\lambda}r)^{k-m}}{k!}\},\notag
      \end{align}
      where $A_{i}$ and $B_{i}$ are some constants, and $\psi(x)$ is the logarithmic derivative of the Gamma function $\Gamma(x)$. So as in the previous cases, for the large $i$, $B_{i}=0$.

By the above explicit solutions for $u_{i}$ and estimates for eigenfunctions $\varphi_{i}$, we obtain the following asymptotic behavior for eigenfunction $u$.

\begin{thm}\label{AsymptoticExpansion}
Let $(M^{n},g,p)$ be a compact Riemannian manifold with a single cone-like singularity $p$ with $R_{h_{0}}>(n-2)$, and $u$ be an eigenfunction of the operator $-4\Delta_{g}+R_{g}$ on $M$. Then $u$ has an asymptotic expansion at the conical singularity $p$ as
$$u\sim \sum^{+\infty}_{j=1}\sum^{+\infty}_{l=0}\sum^{p_{j}}_{p=0}r^{s_{j}+l}(\ln r)^{p}u_{j,l,p},$$
where $u_{j,l,p}\in C^{\infty}(N^{n-1})$, $p_{j}=0$ or $1$, and $s_{j}=-\frac{n-2}{2}\pm\frac{\sqrt{\mu_{j}-(n-2)}}{2}$, where $\mu_{j}$ are eigenvalues of $-\Delta_{h_{0}}+R_{h_{0}}$ on $N^{n-1}$.
\end{thm}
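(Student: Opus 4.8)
The plan is to feed the explicit radial solutions of $(\ref{general_Bessel_equation})$ found in the three cases $\lambda=0$, $\lambda>0$, $\lambda<0$ into the separation of variables $u=\sum_{i}u_{i}(r)\psi_{i}(x)$, expand each radial factor $u_{i}$ into a convergent power series in $r$ carrying at most one factor of $\ln r$, observe that the exponents that occur are exactly the indicial roots $s_{i}^{\pm}=-\frac{n-2}{2}\pm\frac{\sqrt{\mu_{i}-(n-2)}}{2}$ shifted by non-negative integers, and finally reorganize the resulting double sum into the claimed triple sum. The first thing I would record is the structural role of the hypothesis $R_{h_{0}}>(n-2)$: since $\mu_{i}$ is the $i$-th eigenvalue of $-4\Delta_{h_{0}}+R_{h_{0}}$, the Rayleigh quotient gives $\mu_{i}\geq\mu_{1}\geq\min_{N}R_{h_{0}}>n-2$, so every $\mu_{i}-(n-2)$ is strictly positive, each $s_{i}^{\pm}$ is real, and $s_{i}^{+}=-\frac{n-2}{2}+\frac{\sqrt{\mu_{i}-(n-2)}}{2}\to+\infty$ as $i\to\infty$. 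This monotone escape of the leading exponents to $+\infty$ is precisely what will turn the formal series into an honest asymptotic expansion.

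Next I would carry out the radial expansions case by case. In Case 1 the Euler solutions $(\ref{solution for 0 eigenvalue})$ are already the pure monomials $r^{s_{i}^{\pm}}$, a logarithm entering only in the degenerate subcase where $\sqrt{\mu_{i}-(n-2)}\in\mathbb{Z}_{>0}$ and the two indicial roots differ by an integer. In Case 2 I would insert the classical power series of the Bessel functions $J_{\nu}$ and $Y_{\nu}$ of order $\nu=\frac12\sqrt{\mu_{i}-(n-2)}$ into $(\ref{solution for positive eigenvalues})$: the prefactor $r^{-\frac{n-2}{2}}$ combines with $(\tfrac{\sqrt{\lambda}\,r}{2})^{\pm\nu}$ and the even series in $r$ to produce exponents of the form $s_{i}^{\pm}+l$, the logarithm appearing exactly when $\nu\in\mathbb{Z}$ and then only to the first power. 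In Case 3 the solutions $(\ref{solution for negative eigenvalues1})$ and $(\ref{solution for negative eigenvalues2})$ are already displayed as series; multiplying through by $e^{-\sqrt{-\lambda}\,r/2}=\sum_{k}(-\tfrac{\sqrt{-\lambda}}{2})^{k}r^{k}/k!$ and reindexing again yields exponents $s_{i}^{\pm}+l$ with $l\in\mathbb{Z}_{\geq0}$, a single factor of $\ln r$ appearing exactly in the integer-parameter solution $(\ref{solution for negative eigenvalues2})$. Thus in all three cases the exponents of the $i$-th mode lie in $\{s_{i}^{\pm}+l:\,l\in\mathbb{Z}_{\geq0}\}$ and the highest power of $\ln r$ is at most $p_{j}\leq1$, governed by whether $\sqrt{\mu_{j}-(n-2)}$ is a positive integer.

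I would then regroup. For each distinct cross-section eigenvalue $\mu_{j}$ the eigenspace is finite dimensional, all modes $\psi_{i}$ with $\mu_{i}=\mu_{j}$ share the same radial ODE and hence the same radial series, so collecting them makes the coefficient of each monomial $r^{s_{j}+l}(\ln r)^{p}$ a finite linear combination of these $\psi_{i}$, i.e. a function $u_{j,l,p}\in C^{\infty}(N^{n-1})$; this is exactly the triple sum in the statement. To see that it is a true asymptotic expansion, fix a cutoff $\Lambda$. Because $s_{i}^{+}\to+\infty$, only finitely many modes satisfy $s_{i}^{+}\leq\Lambda$, so the terms with $s_{j}+l\leq\Lambda$ form a finite partial sum and the truncation of each of these finitely many radial series contributes only an $O(r^{\Lambda'})$ error with $\Lambda'>\Lambda$. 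For the remaining modes I would estimate $\bigl|\sum_{s_{i}^{+}>\Lambda}A_{i}r^{s_{i}^{+}}\psi_{i}(x)\bigr|\leq\sum_{s_{i}^{+}>\Lambda}|A_{i}|\,r^{s_{i}^{+}}\,\|\psi_{i}\|_{L^{\infty}}$ (the higher-order terms in $l$ being even smaller), bounding $|A_{i}|\lesssim\epsilon^{-s_{i}^{+}}$ from $u\in L^{2}(C_{\epsilon}(N))$ since $\int_{0}^{\epsilon}|A_{i}|^{2}r^{2s_{i}^{+}+n-1}dr\lesssim\|u\|_{L^{2}}^{2}$, and $\|\psi_{i}\|_{L^{\infty}}\leq C(1+|\mu_{i}|)^{n}$ from the estimate derived at the start of this section; together with the Weyl growth of $\mu_{i}$ this tail converges for $r<\epsilon$ and, after factoring out $r^{\Lambda}$, tends to $0$ as $r\to0$, so it is $o(r^{\Lambda})$.

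The main obstacle is this last step: upgrading the termwise Frobenius/separation computation to a uniform asymptotic expansion as $r\to0$. The delicate points are the interchange of the summation over the cross-section modes with the radial Taylor expansion and the uniform control of the doubly-infinite tail; concretely one must show that the exponential-in-$\sqrt{\mu_{i}}$ smallness of $r^{s_{i}^{+}}$ for $r<\epsilon$ dominates both the coefficient growth $|A_{i}|\lesssim\epsilon^{-s_{i}^{+}}$ and the polynomial bound $\|\psi_{i}\|_{L^{\infty}}\lesssim(1+\mu_{i})^{n}$, uniformly on shrinking neighborhoods of $p$. If sharper coefficient decay is needed, I would feed $(-4\Delta_{g}+R_{g})^{m}u=\lambda^{m}u$ through the weighted elliptic regularity of Sections $3$--$5$ to place $u$ in $H^{k}(M)$ for all $k$, which forces rapid decay of the $A_{i}$ and removes any remaining ambiguity in the remainder estimate.
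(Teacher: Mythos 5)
Your proposal follows essentially the same route as the paper: separation of variables into cross-section modes, the explicit Euler/Bessel/confluent-hypergeometric radial solutions of $(\ref{general_Bessel_equation})$, elimination of the non-$L^{2}$ branches, control of the coefficients via the $L^{2}$ norm of $u$ on a fixed slice (the paper uses Parseval at $r=r_{0}$, you integrate in $r$ — equivalent), the Sobolev bound $\|\psi_{i}\|_{L^{\infty}}\leq C(1+|\mu_{i}|)^{n}$, and domination of the polynomial growth by the exponential decay of $(r/r_{0})^{s_{i}^{+}}$ in $\sqrt{\mu_{i}}$. Your explicit $o(r^{\Lambda})$ remainder estimate after truncation is a slightly more careful rendering of the final step, which the paper only sketches via uniform convergence of the mode sum, but it is the same argument.
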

\begin{proof}
On the conical part $U_{p}\setminus\{p\}$, 
$$u(r,x)=\sum^{\infty}_{i=1}u_{i}(r)\psi_{i}(x).$$

Set
$$\nu_{i}=\sqrt{\mu_{i}-(n-2)}.$$

{\em Case 1:}
$\lambda=0$. By the solution $(\ref{solution for 0 eigenvalue})$, there exists large enough $i_{0}\in\mathbb{N}$ such that for all $i\geq i_{0}$,
$$u_{i}(r)=A_{i}r^{-\frac{n-2}{2}+\frac{\nu_{i}}{2}},$$
and $\mu_{i}>(n-2)$.

For a fixed $r_{0}$, $u(r_{0},x)\in L^{2}(N)$, and by Parseval's identity
$$+\infty>\|u(r_{0},x)\|_{L^{2}(N)}=\sum^{\infty}_{i=1}|u_{i}(r_{0})|^{2}
\geq\sum^{\infty}_{i=i_{0}}|A_{i}|^{2}r_{0}^{-(n-2)+\nu_{i}}.$$
Then for all $r\leq\frac{r_{0}}{2}$,
\begin{align*}
\sum^{\infty}_{i=i_{0}}|u_{i}(r)\psi_{i}(x)|
& \leq C\sum^{\infty}_{i=i_{0}}|A_{i}|r^{-\frac{n-2}{2}+\frac{\nu_{i}}{2}}(1+|\mu_{i}|)^{n}\\
& = C\sum^{\infty}_{i=i_{0}}|A_{i}|r_{0}^{-\frac{n-2}{2}+\frac{\nu_{i}}{2}}(1+|\mu_{i}|)^{n}
    (\frac{r}{r_{0}})^{-\frac{n-2}{2}+\frac{\nu_{i}}{2}}\\
& \leq C(\sum^{\infty}_{i=i_{0}}|A_{i}|^{2}r_{0}^{-(n-2)+\nu_{i}})^{\frac{1}{2}}(\sum^{\infty}_{i=i_{0}}(1+|\mu_{i}|)^{2n}
    (\frac{1}{2})^{-(n-2)+\nu_{i}})^{\frac{1}{2}}\\
& <+\infty.
\end{align*}
Thus, $\sum\limits^{\infty}_{i=i_{0}}|u_{i}(r)\psi_{i}(x)|$ converges uniformly for $0\leq r\leq\frac{r_{0}}{2}$ and $x\in N$.

{\em Case 2:}
$\lambda>0$. By the solution $(\ref{solution for positive eigenvalues})$, there exists $i_{1}\in\mathbb{N}$ such that for all $i\geq i_{1}$,
$$u_{i}(r)=A_{i} r^{-\frac{n-2}{2}}J_{\frac{1}{2}\nu_{i}}(\frac{\sqrt{\lambda}r}{2})
=A_{i} r^{-\frac{n-2}{2}}(\frac{\sqrt{\lambda}r}{4})^{\frac{\nu_{i}}{2}}
\sum^{\infty}_{m=0}\frac{(\frac{\sqrt{\lambda}r}{4})^{2m}}{m!\Gamma(\frac{1}{2}\nu_{i}+m+1)}.$$
Fix $r_{0}>0$. Then for $r\leq r_{0}$ and $i>i_{1}$,
$$|A_{i}|r^{-\frac{n-2}{2}}\frac{1}{\Gamma(\frac{1}{2}\nu_{i}+1)}(\frac{\sqrt{\lambda}r}{4})^{\frac{\nu_{i}}{2}}<|u_{i}(r)|
<|A_{i}|r^{-\frac{n-2}{2}}\frac{C(r_{0})}{\Gamma(\frac{1}{2}\nu_{i}+1)}(\frac{\sqrt{\lambda}r}{4})^{\frac{\nu_{i}}{2}},$$
where $C(r_{0})=e^{\frac{\lambda r_{0}^{2}}{16}}$. Then
$$+\infty>\|u(r_{0},x)\|_{L^{2}(N)}=\sum^{\infty}_{i=0}|u_{i}(r_{0})|^{2}
\geq\sum^{\infty}_{i=i_{1}}|A_{i}|^{2}r_{0}^{-(n-2)}\frac{1}{(\Gamma(\frac{1}{2}\nu_{i}+1))^{2}}(\frac{\sqrt{\lambda}r_{0}}{4})^{\nu_{i}}.$$
Thus,
$$\sum^{\infty}_{i=i_{1}}|u_{i}(r)\varphi_{i}(x)|
\leq C(r_{0})C\sum^{\infty}_{i=i_{1}}|A_{i}|r^{-\frac{n-2}{2}}\frac{1}{\Gamma(\frac{1}{2}\nu_{i}+1)}(\frac{\sqrt{\lambda}r}{4})^{\frac{\nu_{i}}{2}}(1+|\mu_{i}|)^{n}
<+\infty,$$
again converges uniformly for $0\leq r\leq\frac{r_{0}}{2}$ and $x\in N$.

{\em Case 3:}
$\lambda<0$. By $(\ref{solution for negative eigenvalues1})$ and $(\ref{solution for negative eigenvalues2})$ there exists $i_{2}\in\mathbb{N}$ such that for all $i\geq i_{2}$
$$u_{i}=A_{i}r^{-\frac{n-1}{2}}(\sqrt{-\lambda}r)^{\frac{1+\nu_{i}}{2}}e^{-\frac{\sqrt{-\lambda}r}{2}}
\sum^{\infty}_{k=0}\frac{(\frac{1+\nu_{i}}{2})_{k}}{(1+\nu_{i})_{k}}\frac{(\sqrt{-\lambda}r)^{k}}{k!}.$$
Then for $r\leq\frac{r_{0}}{2}$ and $i>i_{2}$
$$|A_{i}|r^{-\frac{n-1}{2}}(\sqrt{-\lambda}r)^{\frac{1+\nu_{i}}{2}}\leq|u_{i}(r)|\leq e^{\frac{\sqrt{-\lambda}r_{0}}{2}}|A_{i}|r^{-\frac{n-1}{2}}(\sqrt{-\lambda}r)^{\frac{1+\nu_{i}}{2}}.$$
Thus, as above,
$$\sum^{\infty}_{i=i_{2}}|u_{i}(r)\psi_{i}(x)|\leq e^{\frac{\sqrt{-\lambda}r_{0}}{2}}C\sum^{\infty}_{i=i_{2}}|A_{i}|r^{-\frac{n-1}{2}}(\sqrt{-\lambda}r)^{\frac{1+\nu_{i}}{2}}(1+|\mu_{i}|)^{n}<+\infty,$$
converges uniformly for $0\leq r\leq\frac{r_{0}}{2}$ and $x\in N$.

Hence in all three cases, there exits $i_{0}\in\mathbb{N}$ such that $\sum\limits^{\infty}_{i=i_{0}}u_{i}(r)\psi_{i}(x)$ absolutely uniformly converges for all $r\leq\frac{r_{0}}{2}$ and $x\in N$. By plugging $(\ref{solution for 0 eigenvalue})$, $(\ref{solution for positive eigenvalues})$, $(\ref{solution for negative eigenvalues1})$ or $(\ref{solution for negative eigenvalues2})$ in $u(r,x)=\sum\limits^{\infty}_{i=1}u_{i}(r)\psi_{i}(x)$, we obtain the asymptotic expansion.

Similarly, we can show that derivatives of the expansion series also absolutely and uniformly converge. And the proof is then complete.
\end{proof}

One of the consequences of the asymptotic expansion in Theorem $\ref{AsymptoticExpansion}$, combining with the fact that eigenfunctions are in $H^{1}(M)$, is  the following.
\begin{cor}\label{AsymptoticOrder}
Let $(M^{n},g,p)$ be a compact Riemannian manifold with a single cone-like singularity $p$ with $R_{h_{0}}>(n-2)$. The eigenfunctions of $-4\Delta_{g}+R_{g}$ satisfy
$$u=o(r^{-\frac{n-2}{2}}), \ \ \ \text{as} \ \ r\rightarrow0.$$
\end{cor}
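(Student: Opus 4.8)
The plan is to feed the explicit expansion of Theorem \ref{AsymptoticExpansion} into the membership $u\in H^1(M)$ and to read off that the only radial exponents which survive are strictly larger than $-\frac{n-2}{2}$.

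First I would record the numerology forced by the hypothesis. By the Rayleigh quotient on the closed manifold $(N,h_0)$, the lowest cross-sectional eigenvalue satisfies $\mu_1\geq\min_N R_{h_0}>(n-2)$, so $\nu_i:=\sqrt{\mu_i-(n-2)}>0$ for every $i$. Consequently the two indicial roots $s_i^{\pm}=-\frac{n-2}{2}\pm\frac{\nu_i}{2}$ of the Euler part of $(\ref{general_Bessel_equation})$ obey $s_i^-<-\frac{n-2}{2}<s_i^+$, with a genuine gap of size $\nu_i$. The $A_i$-solutions in $(\ref{solution for 0 eigenvalue})$, $(\ref{solution for positive eigenvalues})$, $(\ref{solution for negative eigenvalues1})$ and $(\ref{solution for negative eigenvalues2})$ have leading behavior $r^{s_i^+}$, while the $B_i$-solutions have leading behavior $r^{s_i^-}$ (possibly times $\ln r$, and in the integer-resonance case $(\ref{solution for negative eigenvalues2})$ the even more singular $r^{s_i^--m}$).

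The heart of the argument is to upgrade the conclusion $B_i=0$ for large $i$, already obtained in Theorem \ref{AsymptoticExpansion} from the $L^2$ condition, to $B_i=0$ for every $i$, using the full $H^1$-norm. Since $u$ is an eigenfunction it lies in $Dom(\widetilde{L_A})\subset H^1(M)$ by Lemma \ref{AdmissibleEqualityLemma} (equivalently Corollary \ref{Friedrichs extension on cones}). Restricting to the conical neighborhood $U_p\cong(0,\epsilon)\times N$ and using the orthonormality of $\{\psi_i\}$ exactly as in the proof of Lemma \ref{compactness on cones}, with $\tilde u_i=r^{\frac{n-1}{2}}u_i$ one has $\int_{U_p}\chi^2|u|^2\,d\vol_g=\sum_i\int_0^\epsilon r^{-2}\tilde u_i(r)^2\,dr$, so finiteness of $\|u\|_{H^1(M)}$ forces $\int_0^\epsilon r^{-2}\tilde u_i(r)^2\,dr<+\infty$ for each fixed $i$. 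Now $s_i^-+\frac{n-1}{2}=\frac{1-\nu_i}{2}$, so if $B_i\neq0$ then $\tilde u_i\sim B_i\,r^{\frac{1-\nu_i}{2}}$ near $r=0$ and $r^{-2}\tilde u_i^2\sim B_i^2\,r^{-1-\nu_i}$, whose integral diverges at $0$ since $\nu_i>0$; the log and resonance cases only make the singularity worse. The $A_i$-part contributes the strictly less singular power $r^{\frac{1+\nu_i}{2}}$, so there is no cancellation of leading orders, and divergence can be avoided only if $B_i=0$. Hence every singular mode is annihilated.

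After this, the expansion of Theorem \ref{AsymptoticExpansion} retains only the exponents $s_j^++l$ with $l\in\mathbb{Z}_{\geq0}$, each $\geq s_1^+=-\frac{n-2}{2}+\frac{\nu_1}{2}>-\frac{n-2}{2}$; in particular the logarithmic terms, which arise only from the $B_i$-part, drop out. Invoking the uniform-in-$x$ convergence of the expansion established in Theorem \ref{AsymptoticExpansion}, I would bound $|u(r,x)|\leq C\,r^{s_1^+}$ for small $r$, whence $r^{\frac{n-2}{2}}|u(r,x)|\leq C\,r^{\nu_1/2}\to0$ uniformly as $r\to0$, which is exactly $u=o(r^{-\frac{n-2}{2}})$. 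The main obstacle is the middle step: the finitely many low modes for which $B_i$ is a priori nonzero are precisely those with $\nu_i<2$, where $r^{s_i^-}$ is still in $L^2$, so the $L^2$ argument of Theorem \ref{AsymptoticExpansion} does not suffice; one must genuinely exploit the weighted $r^{-2}$ term of the $H^1$-norm, together with the linear independence of the two indicial solutions, to rule out any singular contribution.
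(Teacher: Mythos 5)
Your proposal is correct and is exactly the argument the paper intends: the paper's own ``proof'' is the one-line remark that the corollary follows from Theorem~\ref{AsymptoticExpansion} \emph{combined with the fact that eigenfunctions lie in} $H^{1}(M)$, and your use of the weighted term $\int \chi^{2}|u|^{2}$ to force $B_{i}=0$ for \emph{every} $i$ (not just the large $i$ handled by the $L^{2}$ condition, since $r^{s_i^-}$ is still square-integrable when $\nu_i<2$) is precisely the content of that remark, after which the surviving exponents $s_j^{+}+l\geq s_1^{+}>-\frac{n-2}{2}$ give the claim. The only slip is cosmetic: in the resonance case $(\ref{solution for negative eigenvalues2})$ the $B_i$-branch has leading order $r^{s_i^{-}}$ (the prefactor $(\sqrt{-\lambda}\,r)^{\frac{1+m}{2}}$ absorbs the $r^{-m}$ from the finite sum), not $r^{s_i^{-}-m}$, but this does not affect the divergence argument.
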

\end{section}


\begin{section}{Asymptotic behavior of eigenfunctions of $-4\Delta+R$ on compact manifolds with a single conical singularity}
\noindent In this section, we obtain an asymptotic order for eigenfunctions near the singularity on manifolds with a single conical singularity. For this purpose, by using a scaling technique, we first establish Sobolev inequality and elliptic estimate for weighted norms on a finite (excat) cone analogous to that on $\mathbb{R}^{n}$ in \cite{Bar86}.

We first work on a finite cone $(C_{\epsilon}(N)=(0,\epsilon)\times N, g=dr^{2}+r^{2}h_{0})$. Define weighted uniform $C^{k}_{\delta}$-norms on a finite cone $C_{\epsilon}(N)$ as
\begin{equation}\label{WeightedLpNorm}
\|u\|_{C^{k}_{\delta}(C_{\epsilon(N)})}=\sup_{C_{\epsilon}(N)}(\sum^{k}_{i=0}r^{i-\delta}|\nabla^{i}u|),
\end{equation}
for $k\in\mathbb{N}$ and $\delta\in\mathbb{R}$. When $k=0$, we use $C_{\delta}$ to denote $C^{0}_{\delta}$. Then similar to (\lowercase\expandafter{\romannumeral4}) of Theorem 1.2 in \cite{Bar86}, we use scaling technique to obtain the following weighted Sobolev inequality.
\begin{lem}\label{SobolevInequality}
If $u\in H^{k}_{\delta}(C_{\epsilon}(N))$, and $k>\frac{n}{2}+l$, then
\begin{equation}
\|u\|_{C^{l}_{\delta}(C_{\epsilon}(N))}\leq C\|u\|_{H^{k}_{\delta}(C_{\epsilon}(N))},
\end{equation}
for some constant $C=C(n,k,\delta,\epsilon)$.

Moreover,
$$|\nabla^{l}u(r,x)|=o(r^{-l+\delta}) \ \ \text{as} \ \ r\rightarrow0.$$
\end{lem}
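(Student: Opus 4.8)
The plan is to reduce the weighted inequality on the cone to the ordinary Sobolev embedding theorem on a single fixed ``model'' annulus, exploiting the exact homothety invariance of the cone metric; by density it suffices to prove the estimate for $u\in C^{\infty}_{0}(C_{\epsilon}(N))$ and pass to the limit. The starting point is that under the dilation $\Phi_{s}(r,x)=(sr,x)$ one has $\Phi_{s}^{*}g=s^{2}g$, so the metric $g=dr^{2}+r^{2}h_{0}$ is scale invariant up to the \emph{constant} conformal factor $s^{2}$. In particular the Levi-Civita connection is unchanged by such a dilation, while a covariant $i$-tensor $T$ satisfies $|T|_{s^{2}g}=s^{-i}|T|_{g}$ and $d\vol_{s^{2}g}=s^{n}\,d\vol_{g}$.

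First I would cover $(0,\epsilon)\times N$ by dyadic annuli $A_{j}=\{2^{-j-1}\epsilon\le r\le 2^{-j}\epsilon\}$, $j\ge 0$, together with slightly enlarged overlapping pieces $A_{j}'=\{2^{-j-2}\epsilon\le r\le 2^{-j+1}\epsilon\}$ of bounded overlap multiplicity. Setting $s_{j}=2^{-j}\epsilon$, the dilation $\Phi_{s_{j}}$ identifies $(A_{j}',g)$ isometrically with $(A',s_{j}^{2}\bar g)$, where $A'=[\tfrac14,2]\times N$ is a \emph{fixed} compact manifold with boundary and $\bar g=d\rho^{2}+\rho^{2}h_{0}$. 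Writing $\tilde v(\rho,x)=u(s_{j}\rho,x)$ for the rescaled function and using that $\rho\in[\tfrac14,2]$ makes every weight power $\rho^{\alpha}$ comparable to $1$ (with constants depending only on $n,k,\delta$), the scaling relations above give a clean decoupling in $j$: the $H^{k}_{\delta}$-energy of $u$ over $A_{j}'$ is comparable to $s_{j}^{-2\delta}\|\tilde v\|_{W^{k,2}(A',\bar g)}^{2}$, while $\sup_{A_{j}}\sum_{i=0}^{l}r^{i-\delta}|\nabla^{i}u|_{g}$ is comparable to $s_{j}^{-\delta}\|\tilde v\|_{C^{l}(A',\bar g)}$. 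The point is that the weight indices in the definitions of $H^{k}_{\delta}$ and $C^{l}_{\delta}$ are chosen precisely so that the powers $s_{j}^{i}$ coming from the $i$ differentiations and the power $s_{j}^{n}$ coming from the volume cancel, leaving only the single factor $s_{j}^{-\delta}$ in both norms.

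Next I would apply the ordinary Sobolev embedding $W^{k,2}(A',\bar g)\hookrightarrow C^{l}(A',\bar g)$, valid since $k>\tfrac{n}{2}+l$ and $A'$ is a fixed compact Riemannian manifold with boundary; crucially its embedding constant is the \emph{same} for every $j$, because all the $A_{j}'$ are homothetic to the single model $A'$. Combining this with the two scaling comparabilities yields, for each $j$,
\[
\sup_{A_{j}}\sum_{i=0}^{l} r^{i-\delta}|\nabla^{i}u|_{g}\;\le\;C\,s_{j}^{-\delta}\|\tilde v\|_{C^{l}(A',\bar g)}\;\le\;C\,s_{j}^{-\delta}\|\tilde v\|_{W^{k,2}(A',\bar g)}\;\le\;C\,\|u\|_{H^{k}_{\delta}(A_{j}')}\;\le\;C\,\|u\|_{H^{k}_{\delta}(C_{\epsilon}(N))},
\]
with $C=C(n,k,\delta,\epsilon)$ independent of $j$. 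Since the $A_{j}$ exhaust the cone, taking the supremum over $j$ gives the asserted inequality. For the refined decay statement, I would use that the bounded-overlap property gives $\sum_{j}\|u\|_{H^{k}_{\delta}(A_{j}')}^{2}\le C\|u\|_{H^{k}_{\delta}(C_{\epsilon}(N))}^{2}<+\infty$, so the tail $\|u\|_{H^{k}_{\delta}(A_{j}')}\to 0$ as $j\to\infty$; for $r\in A_{j}$ the displayed estimate then bounds $r^{l-\delta}|\nabla^{l}u(r,x)|\le C\|u\|_{H^{k}_{\delta}(A_{j}')}\to 0$, which is exactly $|\nabla^{l}u|=o(r^{-l+\delta})$ as $r\to 0$.

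The main obstacle is organizational rather than deep: one must check that the scaling exponents balance exactly, so that a single power $s_{j}^{-\delta}$ survives in both norms, and that the Sobolev constant can be taken uniform in $j$. Both are settled by passing to the fixed model annulus $A'$, to which every $A_{j}'$ is isometric after rescaling, so that no $j$-dependence enters the constant. The only remaining care is the harmless outer region near $r=\epsilon$ (the annulus $A_{0}'$), where the weight is bounded and the estimate reduces to the standard embedding on a fixed compact piece.
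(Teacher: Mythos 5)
Your proposal is correct and follows essentially the same route as the paper's proof: a dyadic annular decomposition of the cone, rescaling each annulus by the homothety $r\mapsto s_j r$ onto a single fixed model annulus on which the ordinary Sobolev embedding applies with a $j$-independent constant, with the weight exponents chosen exactly so that a single factor $s_j^{-\delta}$ survives on both sides; the decay statement likewise follows in both arguments from the square-summability of the annular $H^{k}_{\delta}$-energies forcing the tail to vanish. The only cosmetic difference is your use of slightly enlarged overlapping annuli, which the paper does not need.
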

\begin{proof}
Let $u(r,x)$ be a function on the finite cone $C_{\epsilon}(N)$, where $x$ is a coordinate on $N$, and set
\begin{equation}
u_{a}(r,x)=u(ar,x),
\end{equation}
for a positive constant $a$. And let $C_{r_{1},r_{2}}=(r_{1},r_{2})\times N$ be a annulus on the finite cone $C_{\epsilon}(N)$, for $r_{1}<r_{2}\leq\epsilon$. Then by a simple change of variables, we have
\begin{equation}\label{RescaledHNorm}
\|u\|_{H^{k}_{\delta}(C_{ar_{1},ar_{2}})}=a^{-\delta}\|u_{a}\|_{H^{k}_{\delta}(C_{r_{1},r_{2}})},
\end{equation}
and
\begin{equation}\label{RescaledCNorm}
\|u\|_{C^{l}_{\delta}(C_{ar_{1},ar_{2}})}=a^{-\delta}\|u_{a}\|_{C^{l}_{\delta}(C_{r_{1},r_{2}})}.
\end{equation}

Let $C_{j}=((\frac{1}{2})^{j+1}\epsilon,(\frac{1}{2})^{j}\epsilon)\times N$ be an annulus on the cone $C_{\epsilon}(N)$. For any fixed $j\in\mathbb{N}$, by choosing $a=(\frac{1}{2})^{j}$, $r_{1}=(\frac{1}{2})\epsilon$, and $r_{2}=\epsilon$ in $(\ref{RescaledHNorm})$ and $(\ref{RescaledCNorm})$, and using the usual Sobolev inequality, we have
\begin{align*}
\|u\|_{C^{l}_{\delta}(C_{j})} &=(\frac{1}{2})^{-j\delta}\|u_{(\frac{1}{2})^{j}}\|_{C^{l}_{\delta}(C_{0})}\\
                          &\leq (\frac{1}{2})^{-j\delta}C\|u_{(\frac{1}{2})^{j}}\|_{H^{k}_{\delta}(C_{0})}\\
                          &=C\|u\|_{H^{k}_{\delta}(C_{j})}\\
                          &\leq C\|u\|_{H^{k}_{\delta}(C_{\epsilon}(N))},
\end{align*}
where the constant $C$ is independent of $j$. Therefore, we obtain the Sobolev inequality
\begin{equation*}
\|u\|_{C_{\delta}(C_{\epsilon}(N))}\leq C\|u\|_{H^{k}_{\delta}(C_{\epsilon}(N))}.
\end{equation*}
Because $\|u\|_{H^{k}_{\delta}(C_{\epsilon}((N))}<\infty$ we have $\|u\|_{H^{k}_{\delta}(C_{j})}=o(1)$ as $j\rightarrow\infty$. Therefore, we have $|\nabla^{l}u(r,x)|=o(r^{-l+\delta})$ as $r\rightarrow0$, since $\sup\limits_{(\frac{1}{2})^{j+1}\epsilon<r<(\frac{1}{2})^{j}\epsilon}r^{l-\delta}|\nabla^{l}u(r,x)|\leq\|u\|_{C^{l}_{\delta}(C_{j})}\leq C\|u\|_{H^{k}_{\delta}(C_{j})}$.
\end{proof}

Similar to Proposition 1.6 in \cite{Bar86}, we also have the following elliptic estimate.
\begin{lem}\label{EllipticEstimate}
If $u\in H^{k-2}_{\delta}(C_{\epsilon}(N))$, and $Lu\in H^{k-2}_{\delta-2}(C_{\epsilon}(N))$, then
\begin{equation*}
\|u\|_{H^{k}_{\delta}(C_{\epsilon}(N))}\leq C(\|Lu\|_{H^{k-2}_{\delta-2}(C_{\epsilon}(N))}+\|u\|_{H^{k-2}_{\delta}(C_{\epsilon}(N))}),
\end{equation*}
for some constant $C=C(n,k,\delta,\epsilon)$.
\end{lem}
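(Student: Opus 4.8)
The plan is to deduce the estimate from the standard interior (and, at the outer boundary, Dirichlet) elliptic estimate for $L=-4\Delta_g+R_g$ on a \emph{single fixed} reference annulus, using a dyadic decomposition together with the exact scale-covariance of both the weighted norms and the operator $L$. The two ingredients are already available. The first is the scaling identity $(\ref{RescaledHNorm})$ for the weighted norms. The second is the homogeneity of $L$ under dilation: writing $u_a(r,x)=u(ar,x)$, a direct computation from $(\ref{Lapace_on_cone})$ and $(\ref{Scale_curvature_on_cone})$ gives $\Delta_g u_a=a^2(\Delta_g u)_a$ and $R_g u_a=a^2(R_g u)_a$, so that $L u_a=a^2 (Lu)_a$. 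This is exactly the homogeneity that makes $L$ map $H^k_\delta$ into $H^{k-2}_{\delta-2}$, and it is what forces the weight to drop by $2$ in the statement.

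First I would fix a reference annulus $C_0=(\tfrac{\epsilon}{2},\epsilon)\times N$, whose dilates $C_j=((\tfrac12)^{j+1}\epsilon,(\tfrac12)^j\epsilon)\times N$ tile $C_\epsilon(N)$, together with an enlargement $\widetilde{C}_0=(\tfrac{\epsilon}{4},\epsilon)\times N$. On these fixed domains two elementary facts hold: (i) since $r$ ranges over a fixed compact subinterval of $(0,\infty)$, the weighted norms $H^m_\sigma$ are uniformly equivalent to the ordinary Sobolev norms $W^{m,2}$; and (ii) $L$ is a second order elliptic operator with smooth coefficients there, so the classical elliptic estimate $\|v\|_{W^{k,2}(C_0)}\le C(\|Lv\|_{W^{k-2,2}(\widetilde{C}_0)}+\|v\|_{L^2(\widetilde{C}_0)})$ holds (for elements of the spaces $H^m_\sigma$, which vanish near $r=\epsilon$, the Dirichlet version covers the top). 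Applying (ii) to $v=u_a$ with $a=(\tfrac12)^j$, converting back through (i), the homogeneity $Lu_a=a^2(Lu)_a$, and $(\ref{RescaledHNorm})$ used with weights $\delta$ and $\delta-2$, all powers of $a$ cancel and one obtains, with $\widetilde{C}_j$ the corresponding dilate of $\widetilde{C}_0$,
\[
\|u\|_{H^k_\delta(C_j)}\le C\big(\|Lu\|_{H^{k-2}_{\delta-2}(\widetilde{C}_j)}+\|u\|_{H^{k-2}_\delta(\widetilde{C}_j)}\big),
\]
with $C$ independent of $j$. The cancellation is the crux: the factor $a^{-\delta}$ in $\|u\|_{H^k_\delta(C_j)}=a^{-\delta}\|u_a\|_{H^k_\delta(C_0)}$ is matched by the factor $a^{+\delta}$ coming from $\|Lu_a\|_{H^{k-2}_{\delta-2}}=a^2\|(Lu)_a\|_{H^{k-2}_{\delta-2}}=a^{2}a^{\delta-2}\|Lu\|_{H^{k-2}_{\delta-2}(\widetilde{C}_j)}$ and, likewise, from $\|u_a\|_{H^{k-2}_\delta}=a^{\delta}\|u\|_{H^{k-2}_\delta(\widetilde{C}_j)}$.

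Finally I would sum over $j$. Arranging the enlargements $\widetilde{C}_j$ to have bounded overlap (each point lies in at most a fixed number of them) and square-summing the per-annulus inequalities gives
\[
\|u\|^2_{H^k_\delta(C_\epsilon(N))}=\sum_j\|u\|^2_{H^k_\delta(C_j)}\le C'\big(\|Lu\|^2_{H^{k-2}_{\delta-2}(C_\epsilon(N))}+\|u\|^2_{H^{k-2}_\delta(C_\epsilon(N))}\big),
\]
and taking square roots yields the Lemma. The main obstacle is not any single inequality but keeping the constant uniform in $j$ all the way to the tip $r\to 0$; this is precisely what the Bartnik-type weight convention of $\S 3$ is designed to guarantee, since after rescaling each annulus carries literally the \emph{same} elliptic problem for $L$, up to the explicit factor $a^2$ that the weight shift $\delta\mapsto\delta-2$ absorbs. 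A secondary, purely technical point is the single top annulus reaching $r=\epsilon$, where the interior estimate is replaced by the Dirichlet boundary estimate (the coefficients of $L$ being smooth up to $r=\epsilon$ and the functions in question vanishing there), contributing a bounded amount absorbed into $C'$.
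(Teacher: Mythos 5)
Your proposal is correct and is essentially the paper's own argument: the authors prove this lemma in one line by invoking ``the usual interior elliptic estimates and the scaling technique as in the proof of Lemma \ref{SobolevInequality}'', which is precisely the dyadic annulus decomposition, rescaling to a fixed reference annulus, and weight-exponent cancellation that you carry out in detail. Your additional observations --- the exact homogeneity $Lu_a=a^2(Lu)_a$ on the exact cone, the bounded overlap of the enlarged annuli, and the Dirichlet treatment of the top annulus --- are exactly the points needed to make the one-line proof rigorous.
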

\begin{proof}
The inequality follows from the usual interior elliptic estimates and the scaling technique as in the proof of Lemma $\ref{SobolevInequality}$.
\end{proof}

Now we consider a finite asymptotic cone $(C_{\epsilon}(N)=(0,\epsilon)\times N, g=dr^{2}+r^{2}h_{r})$, where $h_{r}$ is a family of Riemannian metrics on $N$ satisfying $h_{r}=h_{0}+o(r^{\alpha})$ as $r\rightarrow0$ for some $\alpha>0$ and a Riemannian metric $h_{0}$ on $N$. On the finite asymptotic cone, we can also define weighted Sobolev norms and weighted uniform $C^{k}$-norms similar to $(\ref{WSN})$ and $(\ref{WeightedLpNorm})$. We use $\|\cdot\|_{\widetilde{H}^{k}_{\delta}(C_{\epsilon}(N))}$ and $\|\cdot\|_{\widetilde{C}^{k}_{\delta}(C_{\epsilon}(N))}$ to denote weighted norms on the finite asymptotic cone.

We make an extra assumption for the asymptotically conical metric $g=dr^{2}+r^{2}h_{r}$:
\begin{equation}\label{AsymptoticOfMetric}
|\nabla^{i+1}(h_{r}-h_{0})|\in C_{-i}(C_{\epsilon}(N)), \ \ \textit{for} \ \ 0\leq i\leq \frac{n}{2}+2,
\end{equation}
where the covariant derivative $\nabla$ and the norm $| \cdot |$ of tensors are with respect to the exactly conical metric $dr^{2}+r^{2}h_{0}$. Then asymptotic condition $(\ref{AsymptoticOfMetric})$ of the metric implies that $r^{i}|\nabla^{i}\omega|$ is bounded for all $0\leq i\leq \frac{n}{2}+2$, where $\omega$ is the difference tensor between the Levi-Civita connection for the asymptotically conical metric and the one for the exactly conical metric. And then as arguments in the proof of Theorem $\ref{compactness on manifolds}$, for sufficiently small $\epsilon$, these weighted norms with respect to the asymptotically conical metric on $C_{\epsilon}(N)$ are equivalent to corresponding weighted norms with respect to the exact cone metric on $C_{\epsilon}(N)$. Therefore, by Lemma $\ref{SobolevInequality}$ and Lemma $\ref{EllipticEstimate}$, we have the following Sobolev inequality and elliptic estimates on a sufficiently small finite asymptotic cone.
\begin{lem}\label{AsymptoticSobolevInequality}
If $\epsilon$ is sufficiently small, $u\in \widetilde{H}^{k}_{\delta}(C_{\epsilon}(N))$, and $k>\frac{n}{2}+1$, then
\begin{equation}
\|u\|_{\widetilde{C}^{l}_{\delta}(C_{\epsilon}(N))}\leq C\|u\|_{\widetilde{H}^{k}_{\delta}(C_{\epsilon}(N))},
\end{equation}
for $l=0,$ and $1$, and some constant $C=C(n,k,\delta,\epsilon)$.
\end{lem}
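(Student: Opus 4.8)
The plan is to deduce this from the exact-cone Sobolev inequality of Lemma \ref{SobolevInequality} by showing that, on a sufficiently small cone $C_{\epsilon}(N)$, the weighted norms $\widetilde{H}^{k}_{\delta}$ and $\widetilde{C}^{l}_{\delta}$ attached to the asymptotically conical metric $g=dr^{2}+r^{2}h_{r}$ are uniformly equivalent to the corresponding norms $H^{k}_{\delta}$ and $C^{l}_{\delta}$ attached to the exact cone metric $g_{0}=dr^{2}+r^{2}h_{0}$. Once this equivalence is in hand, for $u\in\widetilde{H}^{k}_{\delta}(C_{\epsilon}(N))$ with $k>\frac{n}{2}+1$ one chains
\[
\|u\|_{\widetilde{C}^{l}_{\delta}}\leq C_{1}\|u\|_{C^{l}_{\delta}}\leq C_{2}\|u\|_{H^{k}_{\delta}}\leq C_{3}\|u\|_{\widetilde{H}^{k}_{\delta}},
\]
where the middle inequality is Lemma \ref{SobolevInequality} and the outer two are the norm equivalences, so that the claim holds with $C=C_{3}C_{2}C_{1}$ for both $l=0$ and $l=1$.

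First I would record the zeroth-order comparison. Since $h_{r}=h_{0}+o(r^{\alpha})$, for $\epsilon$ small enough one has $c^{-1}g_{0}\leq g\leq c\,g_{0}$ with $c$ close to $1$ on $C_{\epsilon}(N)$, which makes the pointwise tensor norms $|\cdot|_{g}$, $|\cdot|_{g_{0}}$ and the volume forms $d\vol_{g}$, $d\vol_{g_{0}}$ uniformly comparable; this already settles the $i=0$ (and $l=0$) terms of both families of norms. The main work, and the main obstacle, is comparing the higher covariant derivatives, which are taken with two different Levi-Civita connections $\nabla^{g}$ and $\nabla^{g_{0}}$. Writing $\omega=\nabla^{g}-\nabla^{g_{0}}$ for the difference tensor, an induction on $i$ expresses the $i$-th $g$-derivative $\nabla^{g,i}u$ as $\nabla^{g_{0},i}u$ plus a universal sum of contractions of the form $(\nabla^{g_{0},j_{1}}\omega)\cdots(\nabla^{g_{0},j_{p}}\omega)(\nabla^{g_{0},m}u)$ with $m+\sum_{s}(j_{s}+1)=i$ and $p\geq 1$.

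The weight exponents are tailored precisely for this bookkeeping. The asymptotic condition $(\ref{AsymptoticOfMetric})$ gives the weighted bounds $r^{j}|\nabla^{g_{0},j}\omega|_{g_{0}}\leq C_{j}$ for $0\leq j\leq\frac{n}{2}+2$, so a term as above is dominated by $r^{-\sum_{s}j_{s}}|\nabla^{g_{0},m}u|$ up to a constant; multiplying by the weight $r^{i-\delta}$ of the $i$-th slot and using $i-\sum_{s}j_{s}=m+p$ yields a bound by $r^{m+p-\delta}|\nabla^{g_{0},m}u|$, which for $r\leq\epsilon$ is controlled (with a spare positive power of $r$) by the term $r^{m-\delta}|\nabla^{g_{0},m}u|$ appearing in the exact-cone $C^{i}_{\delta}$-jet of $u$. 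Summing over the finitely many contractions shows $r^{i-\delta}|\nabla^{g,i}u|$ is bounded by $\sum_{m\leq i}r^{m-\delta}|\nabla^{g_{0},m}u|$ and, by symmetry of the roles of $g$ and $g_{0}$, conversely; this gives equivalence of the $\widetilde{C}^{l}_{\delta}$ and $C^{l}_{\delta}$ norms, and integrating the analogous pointwise estimates against the comparable volume forms gives equivalence of $\widetilde{H}^{k}_{\delta}$ and $H^{k}_{\delta}$.

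The hard part is exactly the weight accounting of the previous paragraph: one must verify that the derivative load on the $\omega$-factors, together with the extra derivative each connection-difference contributes, conspires with the prescribed exponents in $(\ref{WSN})$ and $(\ref{WeightedLpNorm})$ so that no term is under-weighted, and one must check that the orders of $\omega$-derivatives actually needed (up to $i-1$) fall within the range $0\leq i\leq\frac{n}{2}+2$ supplied by $(\ref{AsymptoticOfMetric})$ for the relevant $k$. Granting these equivalences, the statement of the lemma, including that both $l=0$ and $l=1$ are covered, follows immediately from Lemma \ref{SobolevInequality}.
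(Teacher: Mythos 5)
Your proposal is correct and follows essentially the same route as the paper: the paper likewise reduces to the exact-cone Sobolev inequality of Lemma \ref{SobolevInequality} by noting that the hypothesis $(\ref{AsymptoticOfMetric})$ yields $r^{j}|\nabla^{j}\omega|\leq C_{j}$ for the connection-difference tensor $\omega$, whence the weighted $\widetilde{H}^{k}_{\delta}$ and $\widetilde{C}^{l}_{\delta}$ norms are equivalent to their exact-cone counterparts for $\epsilon$ small. Your write-up in fact supplies the weight bookkeeping (the identity $i-\sum_{s}j_{s}=m+p$ with $p\geq 1$ giving the spare power of $r$, and the check that only $\nabla^{j}\omega$ with $j\leq k-1\leq\frac{n}{2}+2$ is needed) that the paper leaves implicit.
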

\begin{lem}\label{AsymptoticEllipticEstimate}
If $\epsilon$ is sufficiently small, $u\in \widetilde{H}^{k-2}_{\delta}(C_{\epsilon}(N))$, and $Lu\in \widetilde{H}^{k-2}_{\delta-2}(C_{\epsilon}(N))$, then
\begin{equation*}
\|u\|_{\widetilde{H}^{k}_{\delta}(C_{\epsilon}(N))}\leq C(\|Lu\|_{\widetilde{H}^{k-2}_{\delta-2}(C_{\epsilon}(N))}+\|u\|_{\widetilde{H}^{k-2}_{\delta}(C_{\epsilon}(N))}),
\end{equation*}
for $2\leq k\leq \frac{n}{2}+2$, and some constant $C=C(n,\delta,\epsilon)$, where $L$ is also the operator $-4\Delta+R$ with respect to the asymptotically conical metric.
\end{lem}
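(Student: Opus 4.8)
The plan is to transfer the exact-cone elliptic estimate of Lemma $\ref{EllipticEstimate}$ to the asymptotic cone, using that the asymptotic condition $(\ref{AsymptoticOfMetric})$ makes the asymptotically conical data a controlled perturbation of the exact cone $g_0=dr^2+r^2h_0$. Write $L_0=-4\Delta_{g_0}+R_{g_0}$ for the exact-cone operator and keep $L=-4\Delta_g+R_g$ for the operator attached to $g=dr^2+r^2h_r$. Two facts are used repeatedly. First, as explained just before the lemma and exactly as in the proof of Theorem $\ref{compactness on manifolds}$, the bounds $\tfrac12 g_0\le g\le 2g_0$ together with the boundedness of $r^i|\nabla^i\omega|$ (where $\omega$ is the Christoffel difference tensor) give the equivalence of weighted norms
\begin{equation*}
\|u\|_{\widetilde{H}^{j}_{\sigma}(C_{\epsilon}(N))}\asymp\|u\|_{H^{j}_{\sigma}(C_{\epsilon}(N))},\qquad 0\le j\le k,
\end{equation*}
and the analogous equivalence for the weighted $C$-norms. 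Second, Lemma $\ref{EllipticEstimate}$ holds for $L_0$ on $C_\epsilon(N)$.

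The core of the argument is the annular rescaling already used in Lemmas $\ref{SobolevInequality}$ and $\ref{EllipticEstimate}$. Cover $C_\epsilon(N)$ by the annuli $C_j=((\tfrac12)^{j+1}\epsilon,(\tfrac12)^{j}\epsilon)\times N$, rescale each by $r\mapsto(\tfrac12)^j r$ to the fixed annulus $C_0$, and apply the standard interior elliptic estimate on $C_0$. Under this rescaling the metric on $C_j$ is sent to $d\rho^2+\rho^2 h_{(\frac12)^j\rho}$ up to an overall conformal constant, and by $(\ref{AsymptoticOfMetric})$ the cross-section metrics $h_{(\frac12)^j\rho}$ converge in $C^{\frac{n}{2}+2}(C_0)$ to $h_0$ as $j\to\infty$; hence the rescaled operators converge to the exact-cone operator and the interior-estimate constant can be taken uniform in $j$ (the finitely many outer annuli, where $r\sim\epsilon$, are controlled directly). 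Summing over $j$ with the scaling relations of the type $(\ref{RescaledHNorm})$ reproduces, for the exact-cone operator, the weighted estimate
\begin{equation*}
\|u\|_{H^{k}_{\delta}(C_{\epsilon}(N))}\le C\left(\|L_0 u\|_{H^{k-2}_{\delta-2}(C_{\epsilon}(N))}+\|u\|_{H^{k-2}_{\delta}(C_{\epsilon}(N))}\right).
\end{equation*}

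I would then replace $L_0$ by $L$. Writing $E=L-L_0$, schematically $Eu=a\ast\nabla^2_{g_0}u+b\ast\nabla_{g_0}u+c\,u$, where $a=g^{-1}-g_0^{-1}$, $b$ is built from $\omega$, and $c=R_g-R_{g_0}$; by $(\ref{scalar curvature of asymptotic cone})$ one has $c=r^{-2}o(r^\alpha)$, while $h_r-h_0=o(r^\alpha)$ and $(\ref{AsymptoticOfMetric})$ control the coefficients and their derivatives up to order $\tfrac{n}{2}+2$ in the appropriate weighted norms. This is precisely why $k$ is restricted to $2\le k\le\tfrac{n}{2}+2$. On the small cone $C_\epsilon(N)$ each coefficient carries a scale-invariant gain of $o(r^\alpha)$, so $\|Eu\|_{H^{k-2}_{\delta-2}}\le\eta(\epsilon)\|u\|_{H^{k}_{\delta}}+C\|u\|_{H^{k-2}_{\delta}}$ with $\eta(\epsilon)\to0$; choosing $\epsilon$ small enough to absorb $C\eta(\epsilon)\|u\|_{H^k_\delta}$ into the left-hand side, and passing back to the $g$-norms by the equivalence above, yields the claimed estimate.

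The main obstacle is this last replacement of $L_0$ by $L$: one must check that the $o(r^\alpha)$ decay of $h_r-h_0$, when read in the scale-invariant weighted norms, produces a genuinely small coefficient multiplying the full $\|u\|_{H^k_\delta}$ at top order (rather than a merely bounded one), and that the lower-order terms $b\ast\nabla_{g_0}u$ and $c\,u$ are absorbed into $\|u\|_{H^{k-2}_\delta}$. Keeping careful track of the $r$-weights carried by $a$, $b$, $c$ against those gained from $\nabla^2u$, $\nabla u$, $u$, and verifying that the derivative count closes for every $k\le\tfrac{n}{2}+2$, is where the real work lies; equivalently, it is the source of the uniform-in-$j$ control of the interior elliptic constant in the rescaling argument.
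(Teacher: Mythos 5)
Your proposal is correct and follows essentially the same route as the paper: equivalence of the weighted norms for $g$ and $g_0$ under the asymptotic condition $(\ref{AsymptoticOfMetric})$, combined with the annular rescaling that underlies Lemma $\ref{EllipticEstimate}$. The paper in fact offers no argument beyond citing this norm equivalence and Lemma $\ref{EllipticEstimate}$, so your explicit treatment of the difference $L-L_0$ (the scale-invariant $o(1)$ smallness of the error coefficients on $C_\epsilon(N)$ and the absorption for small $\epsilon$, or equivalently the uniform-in-$j$ interior constants for the rescaled operators) is a legitimate and needed filling-in of the step the paper leaves implicit.
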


These Sobolev inequality and elliptic estimates imply the following asymptotic order estimate for eigenfunctions of $-4\Delta+R$ near the tip of a finite asymptotic cone.
\begin{thm}\label{GeneralAsymptoticOrderOnCone}
Let $u$ be an eigenfunction of $L=-4\Delta+R$ on a finite asymptotic cone $(C_{\epsilon}(N), dr^{2}+r^{2}h_{r})$ with $R_{h_{0}}>(n-2)$ and $(\ref{AsymptoticOfMetric})$. Then
$$|\nabla^{i}u|=o(r^{-\frac{n-2}{2}-i}), \ \ \ \text{as} \ \ r\rightarrow0,$$
for $i=0$ and $1$.
\end{thm}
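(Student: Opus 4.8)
The plan is to run an elliptic bootstrap entirely within the weighted Sobolev scale at a \emph{single} weight index $\delta=-\frac{n-2}{2}=1-\frac{n}{2}$, and then to read off the pointwise decay from the weighted Sobolev embedding. This $\delta$ is exactly the index for which $H^{1}(M)=H^{1}_{1-n/2}(M)$ is modeled, so the eigenfunction $u$, lying in the Friedrichs domain and hence in $\widetilde{H}^{1}_{1-n/2}$ (cf. Corollary \ref{Friedrichs extension on cones} and Theorem \ref{SpectrumProperty}), is our starting point. Since the assertion is purely local as $r\to 0$, I would first restrict to a sufficiently small finite sub-cone $C_{\epsilon'}(N)$ so that Lemmas \ref{AsymptoticSobolevInequality} and \ref{AsymptoticEllipticEstimate} apply; there $u$ still satisfies $Lu=\lambda u$ on $C_{\epsilon'}(N)\setminus\{p\}$, and its weighted norm over the smaller domain only decreases, so $u\in\widetilde{H}^{1}_{\delta}(C_{\epsilon'}(N))$ as well.

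The argument is driven by two elementary inclusions on a finite cone. Because $\chi=1/r\geq 1$ on $C_{\epsilon'}(N)$ (with $\epsilon'\leq 1$), lowering the weight index is harmless: $\widetilde{H}^{m}_{\delta}\hookrightarrow\widetilde{H}^{m}_{\delta'}$ continuously for every $\delta'\leq\delta$, since $\chi^{2(\delta'-i)+n}=\chi^{2(\delta'-\delta)}\chi^{2(\delta-i)+n}\leq\chi^{2(\delta-i)+n}$. In particular $\widetilde{H}^{m}_{\delta}\hookrightarrow\widetilde{H}^{m}_{\delta-2}$. Combined with the trivial $\widetilde{H}^{m}_{\delta}\hookrightarrow\widetilde{H}^{m-1}_{\delta}$, this shows that $Lu=\lambda u\in\widetilde{H}^{m}_{\delta}$ whenever $u\in\widetilde{H}^{m}_{\delta}$, hence also $Lu\in\widetilde{H}^{m}_{\delta-2}$. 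This is the key point: the eigenvalue equation loses nothing in the weighted scale and the weight index stays fixed throughout the iteration.

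Next I would bootstrap. Starting from $u\in\widetilde{H}^{1}_{\delta}\subset\widetilde{H}^{0}_{\delta}$, apply Lemma \ref{AsymptoticEllipticEstimate} with $k=2$: the hypotheses $u\in\widetilde{H}^{0}_{\delta}$ and $Lu=\lambda u\in\widetilde{H}^{0}_{\delta-2}$ hold by the inclusions above, giving $u\in\widetilde{H}^{2}_{\delta}$. Repeating with $k=3,4,\dots$, at each step $u\in\widetilde{H}^{k-2}_{\delta}$ (from the previous step) and $Lu\in\widetilde{H}^{k-2}_{\delta-2}$, so one gains a derivative and $u\in\widetilde{H}^{k}_{\delta}$. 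I iterate up to $K=\lfloor\frac{n}{2}\rfloor+2$; this is legitimate because $K\leq\frac{n}{2}+2$, the upper limit in Lemma \ref{AsymptoticEllipticEstimate}.

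Finally, since $K=\lfloor\frac{n}{2}\rfloor+2>\frac{n}{2}+1$, Lemma \ref{AsymptoticSobolevInequality} gives $u\in\widetilde{C}^{1}_{\delta}(C_{\epsilon'}(N))$, and the accompanying decay statement, proved exactly as the ``moreover'' in Lemma \ref{SobolevInequality} (the tail norms $\|u\|_{\widetilde{H}^{K}_{\delta}(C_{j})}$ over dyadic annuli $C_{j}$ tend to $0$, whence $\sup_{C_{j}} r^{\,l-\delta}|\nabla^{l}u|\leq C\|u\|_{\widetilde{H}^{K}_{\delta}(C_{j})}\to 0$), yields $|\nabla^{l}u|=o(r^{-l+\delta})$ for $l=0,1$. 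Substituting $\delta=-\frac{n-2}{2}$ gives $|\nabla^{l}u|=o(r^{-\frac{n-2}{2}-l})$, the claim. The delicate point I expect is the bookkeeping of indices rather than any hard estimate: the elliptic estimate is available only for $2\leq k\leq\frac{n}{2}+2$, while the embedding requires $k>\frac{n}{2}+1$, so the bootstrap must land in the window $(\frac{n}{2}+1,\frac{n}{2}+2]$, which it does by exactly one integer. This narrow margin, together with the fixed-$\delta$ observation, is the heart of the matter.
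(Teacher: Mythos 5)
Your proposal is correct and follows essentially the same route as the paper: start from $u\in\widetilde{H}^{1}_{1-n/2}$ (membership in the Friedrichs domain), use that $Lu=\lambda u$ so the weight index can be dropped by $2$ at no cost, bootstrap with Lemma \ref{AsymptoticEllipticEstimate} up to order $[\frac{n}{2}]+2$, and conclude via the weighted Sobolev embedding of Lemma \ref{AsymptoticSobolevInequality}. Your explicit verification that the bootstrap lands in the window where both lemmas apply, and the inclusion $\widetilde{H}^{m}_{\delta}\hookrightarrow\widetilde{H}^{m}_{\delta'}$ for $\delta'\leq\delta$, are details the paper leaves implicit but uses in exactly the same way.
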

\begin{proof}
As we only consider the asymptotic behavior of the eigenfunction near the tip of the cone, without lose of generality, we can assume $\epsilon$ is sufficiently small so that Lemma $\ref{AsymptoticSobolevInequality}$ and Lemma $\ref{AsymptoticEllipticEstimate}$ hold on $C_{\epsilon}(N)$. In the proof of Theorem $\ref{spectrum on cones}$, we have obtained that the eigenfunction $u\in \widetilde{H}^{1}(C_{\epsilon}(N))\equiv\widetilde{H}^{1}_{1-\frac{n}{2}}(C_{\epsilon}(N))$. Then $Lu\in \widetilde{H}^{1}_{1-\frac{n}{2}}(C_{\epsilon}(N))\subset \widetilde{H}^{1}_{1-2-\frac{n}{2}}(C_{\epsilon}(N))$, since $Lu$ is a scale multiple of $u$. Then by Lemma $\ref{AsymptoticEllipticEstimate}$, $u\in \widetilde{H}^{3}_{1-\frac{n}{2}}(C_{\epsilon}(N))$. By applying the elliptic bootstrapping, we obtain that $u\in \widetilde{H}^{[\frac{n}{2}]+2}_{1-\frac{n}{2}}(C_{\epsilon}(N))$. Therefore, by Lemma $\ref{AsymptoticSobolevInequality}$, $u=o(r^{-\frac{n-2}{2}})$, and $|\nabla u|=o(r^{-\frac{n-2}{2}-1})$, as $r\rightarrow0$.
\end{proof}

As a direct consequence of Theorem $\ref{GeneralAsymptoticOrderOnCone}$, eigenfunctions of $-4\Delta+R$ on a manifold with a single conical singularity have an asymptotic behavior near the singularity.
\begin{cor}\label{GeneralAsymptoticOrder}
Let $(M^{n},g,p)$ be a compact Riemannian manifold with a single conical singularity $p$ with $R_{h_{0}}>(n-2)$ and $(\ref{AsymptoticOfMetric})$ near the singularity $p$. The eigenfunctions of $-4\Delta_{g}+R_{g}$ on satisfy
$$|\nabla^{i}u|=o(r^{-\frac{n-2}{2}-i}), \ \ \ \text{as} \ \ r\rightarrow0,$$
for $i=0$ and $1$.
\end{cor}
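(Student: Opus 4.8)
The plan is to reduce the statement to Theorem \ref{GeneralAsymptoticOrderOnCone}, which already provides exactly the decay $|\nabla^{i}u| = o(r^{-\frac{n-2}{2}-i})$, $i=0,1$, for an eigenfunction of $L=-4\Delta+R$ on a finite asymptotic cone. Since the conclusion concerns only the behavior as $r\rightarrow 0$, it is a purely local assertion near the singular point $p$, so all that remains is to check that an eigenfunction on $M$ restricts to an admissible eigenfunction on a small finite asymptotic cone.

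First I would recall from Theorem \ref{SpectrumProperty} that any eigenfunction $u$ of $-4\Delta_{g}+R_{g}$ lies in $C^{\infty}(M\setminus\{p\})$, satisfies the classical equation $-4\Delta_{g}u+R_{g}u=\lambda u$ on $M\setminus\{p\}$, and belongs to the domain of the Friedrichs extension, which is contained in $H^{1}(M)$. By Definition \ref{ManifoldWithConicalSingularities} there is a neighborhood $U_{p}$ isometric to $((0,\epsilon)\times N, dr^{2}+r^{2}h_{r})$ with $h_{r}=h_{0}+o(r^{\alpha})$, on which the asymptotic condition (\ref{AsymptoticOfMetric}) is assumed. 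Shrinking $\epsilon$ if necessary---which does not affect the conclusion---I may assume $\epsilon$ is small enough that Lemma \ref{AsymptoticSobolevInequality} and Lemma \ref{AsymptoticEllipticEstimate} hold on $C_{\epsilon}(N)$.

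Next I would transplant $u$ to this cone. To avoid any issue at the outer boundary $r=\epsilon$, fix a cutoff $\phi\in C^{\infty}((0,\epsilon))$ with $\phi\equiv 1$ for $r\leq\frac{\epsilon}{2}$ and $\phi\equiv 0$ near $r=\epsilon$, and set $v=\phi u$. The global bound $u\in H^{1}(M)$ gives $v\in\widetilde{H}^{1}_{1-\frac{n}{2}}(C_{\epsilon}(N))$, and the product rule for the Laplacian yields
\begin{equation*}
Lv=\lambda v+E, \qquad E=-4(\Delta\phi)\,u-8\langle\nabla\phi,\nabla u\rangle,
\end{equation*}
where $E$ is smooth and supported in the compact annulus $\{\frac{\epsilon}{2}\leq r\leq\epsilon\}$, hence lies in every weighted space $\widetilde{H}^{k-2}_{\delta-2}(C_{\epsilon}(N))$. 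Running the elliptic-bootstrapping argument of Theorem \ref{GeneralAsymptoticOrderOnCone} with the right-hand side $\lambda v+E$ in place of $\lambda u$ places $v\in\widetilde{H}^{[\frac{n}{2}]+2}_{1-\frac{n}{2}}(C_{\epsilon}(N))$, whence Lemma \ref{AsymptoticSobolevInequality} gives $v=o(r^{-\frac{n-2}{2}})$ and $|\nabla v|=o(r^{-\frac{n-2}{2}-1})$ as $r\rightarrow 0$. Since $v=u$ for $r\leq\frac{\epsilon}{2}$, these are precisely the asserted asymptotics for $u$.

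The only point requiring care---and it is mild---is the bookkeeping that the cutoff error $E$ never obstructs the bootstrapping: at each stage one needs $E\in\widetilde{H}^{k-2}_{\delta-2}$, which is immediate because $E$ is smooth and compactly supported away from the tip, so all its weighted norms are finite. With that verified, the corollary is a direct transcription of Theorem \ref{GeneralAsymptoticOrderOnCone} through the local isometry near $p$, and no genuinely new estimate is needed.
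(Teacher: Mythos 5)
Your proposal is correct and follows essentially the same route as the paper, which simply treats the corollary as a direct localization of Theorem \ref{GeneralAsymptoticOrderOnCone} to the conical neighborhood of $p$. Your explicit cutoff $v=\phi u$ and the bookkeeping of the error term $E$ are a slightly more careful version of the same argument, usefully addressing the fact that the weighted spaces on the cone are defined as completions of compactly supported functions.
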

\end{section}


\begin{section}{$\lambda$-functional on manifolds with a single conical singularity}

\noindent In this section, we define the Perelman's $\lambda$-functional on manifolds with a single conical singularity and obtain its first and second variation formulae as an application of spectral properties of the operator $-4\Delta+R$  obtained in the previous sections. And as an application of the first variation formula, we obtain that the $\lambda$-functional is monotonic increasing under the Ricci flow preserving the conical singularity.

Let $(M^{n},g,p)$ be a compact Riemannian manifold with a single conical singularity at $p$ with $R_{h_{0}}>(n-2)$ and $(\ref{AsymptoticOfMetric})$ near $p$. We define the $\lambda$-functional as the first eigenvalue of $-4\Delta+R$. By the results on the asymptotic behavior of the eigenfunctions, we also has 
$$\lambda(g)=\inf\{\ \mathcal{F}(g,u) \ \ | \ \ \int_{M}u^{2}dV_{g}=1, u\geq0\,  \ \ u\in H^{1}(M)\ \}.$$

 Let $u$ be the corresponding normalized positive eigenfunction, i.e. $\int_{M}u^{2}d\vol_{g}=1$ and
\begin{equation}\label{Eigenequation}
-4\Delta u+Ru=\lambda u.
\end{equation}
Let $u=e^{-\frac{f}{2}}$, then $(\ref{Eigenequation})$ becomes
\begin{equation}\label{EigenequationNew}
\lambda=2\Delta f-|\nabla f|^{2}+R.
\end{equation}

Let $g(t)$ for $t\in(-\tau,\tau)$ be a smooth family of metrics on $M^{n}$ with a single conical singularity at $p$ satisfying $R_{h_{0}(t)}>(n-2)$, and $(\ref{AsymptoticOfMetric})$ near $p$ for all $g(t)$, and $g(0)=g$. Differentiating $(\ref{EigenequationNew})$ in $t$ gives
\begin{equation}\label{Variation1}
\dot{\lambda}=2\dot{\Delta}f+2\Delta\dot{f}+\dot{R}-\dot{(|\nabla f|^{2})},
\end{equation}
where ``upperdot" denotes the derivative with respect to $t$ at $t=0$. Multiplying the equation $(\ref{Variation1})$ by $e^{-f}$ and then integrating over $M^{n}$, we have
\begin{equation}\label{Variation2}
\dot{\lambda}=\int_{M}(2\dot{\Delta}f+2\Delta\dot{f}+\dot{R}-\dot{(|\nabla f|^{2})})e^{-f}d\vol_{g}.
\end{equation}

Let's look at the second term in the integral in $(\ref{Variation2})$.
\begin{align*}
\int_{M\setminus C_{\epsilon}(N)}2\Delta\dot{f}e^{-f}d\vol_{g}
&=\int_{\partial C_{\epsilon}(N)}(\partial_{r}\dot{f})e^{-f}r^{n-1}d\vol_{h_{\epsilon}}\\
&\ \ \ -\int_{M\setminus C_{\epsilon}(N)}2\langle\nabla\dot{f},\nabla f\rangle e^{-f}d\vol_{g}\\
&=o(\epsilon^{-(n-2)-1+(n-1)})\\
&\ \ \ -\int_{M\setminus C_{\epsilon}(N)}2\langle\nabla\dot{f},\nabla f\rangle e^{-f}d\vol_{g}\\
&\rightarrow -\int_{M}2\langle\nabla\dot{f},\nabla f\rangle e^{-f}d\vol_{g} \ \ \text{as} \ \ \epsilon\rightarrow0,
\end{align*}
where the boundary term goes away in the limit because of the asymptotic behavior of the eigenfunction in Theorem $\ref{GeneralAsymptoticOrder}$. Here in the above we have used $(\partial_{r}\dot{f})e^{-f}=-2[(\partial_{r}\dot{u})u-(\partial_{r} u)\dot{u}]=o(r^{-(n-2)-1})$, as $r\rightarrow0$.

For the other terms, plug in the standard variation formulae for the scale curvature $R$ and the Laplacian $\Delta$ (see e.g. 1.174 and 1.184 in \cite{Bes87}, or \cite{DWW05}) into $(\ref{Variation2})$. Then similar to the second term, when we do integration by parts all boundary terms go away. Therefore, we obtain the same first variation formula as that on the smooth compact manifolds.
\begin{prop}\label{FirstVariation}
\begin{equation}\label{FVE}
\dot{\lambda}=\int_{M}\langle-Ric_{g}-Hess_{g}f,h\rangle_{g} e^{-f}d\vol_{g},
\end{equation}
where $h=\dot{g}$.
\end{prop}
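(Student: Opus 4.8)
The plan is to reproduce Perelman's smooth-manifold computation starting from the integrated variation identity $(\ref{Variation2})$, and to verify that every boundary contribution created by integration by parts over the truncated manifold $M\setminus C_{\epsilon}(N)$ tends to $0$ as $\epsilon\to 0$, using the asymptotic order estimates of Corollary~$\ref{GeneralAsymptoticOrder}$. Two preliminary points make $(\ref{Variation2})$ legitimate: since the first eigenvalue is simple (the Corollary at the end of \S6), standard perturbation theory gives that $\lambda(g(t))$ and the normalized positive eigenfunction $u(t)=e^{-f(t)/2}$ depend smoothly on $t$, so $(\ref{Variation1})$ may be differentiated pointwise; and because $e^{-f}=u^{2}$ with $\int_{M}u^{2}\,d\vol_{g}=1$, multiplying $(\ref{Variation1})$ by $e^{-f}$ and integrating indeed isolates $\dot\lambda$ on the left. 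I also record that, since the family $g(t)$ preserves the conical structure and satisfies $(\ref{AsymptoticOfMetric})$ uniformly, the symmetric $2$-tensor $h=\dot g$ and its covariant derivatives are bounded with respect to the conical metric near $p$.

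Next I would treat the four terms of $(\ref{Variation2})$ exactly as in the smooth case, substituting the standard first variation formulae for $R$ and $\Delta$ (Besse 1.174 and 1.184 in \cite{Bes87}): schematically $\dot R=-\Delta_{g}(\mathrm{tr}_{g}h)+\delta_{g}\delta_{g}h-\langle h,Ric_{g}\rangle$, $\dot\Delta f=-\langle h,Hess_{g}f\rangle-\langle \delta_{g}h-\tfrac12\nabla(\mathrm{tr}_{g}h),\nabla f\rangle$, and $\dot{(|\nabla f|^{2})}=-h(\nabla f,\nabla f)+2\langle\nabla\dot f,\nabla f\rangle$. The term $2\Delta\dot f$ is already handled in the excerpt: integrating by parts over $M\setminus C_{\epsilon}(N)$ converts it to $-\int 2\langle\nabla\dot f,\nabla f\rangle e^{-f}\,d\vol_{g}$ plus a boundary integral over $\{r=\epsilon\}\times N$ that is $o(1)$. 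The remaining $\Delta_{g}(\mathrm{tr}_{g}h)$ and divergence terms are integrated by parts in the same manner; the bulk contributions combine, precisely as on a closed manifold, into $\langle -Ric_{g}-Hess_{g}f,h\rangle_{g}e^{-f}$, while each integration by parts spawns one boundary integral over $\{r=\epsilon\}$.

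The crux is then the uniform order count showing every such boundary integral vanishes. On $\{r=\epsilon\}\times N$ the measure is $\sim\epsilon^{n-1}\,d\vol_{h_{\epsilon}}$, so it suffices that each boundary integrand be $o(\epsilon^{-(n-1)})$. From Corollary~$\ref{GeneralAsymptoticOrder}$, $u=o(r^{-\frac{n-2}{2}})$ and $|\nabla u|=o(r^{-\frac{n-2}{2}-1})$, whence $e^{-f}=u^{2}=o(r^{-(n-2)})$ and $e^{-f}|\nabla f|=2|u|\,|\nabla u|=o(r^{-(n-1)})$; likewise $(\partial_{r}\dot f)e^{-f}=-2(u\,\partial_{r}\dot u-\dot u\,\partial_{r}u)=o(r^{-(n-1)})$. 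Since $h$ and $\nabla h$ are bounded, every boundary integrand is a product of these quantities with bounded tensors, hence either $o(r^{-(n-1)})$ (the terms carrying a factor $\nabla f$) or $o(r^{-(n-2)})$ (the terms with $e^{-f}$ alone); multiplied by $r^{n-1}$ both are $o(1)$ as $\epsilon\to 0$. I expect the main obstacle to be exactly this boundary analysis, and in particular checking that $\dot u$ obeys the same decay as $u$ so that products such as $u\,\partial_{r}\dot u$ have the stated order; this follows from applying Theorem~$\ref{GeneralAsymptoticOrderOnCone}$ to $u(t)$ with constants uniform in $t$ (guaranteed by the uniform hypotheses $R_{h_{0}(t)}>(n-2)$ and $(\ref{AsymptoticOfMetric})$) and differentiating in $t$. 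With all boundary terms gone, $(\ref{Variation2})$ collapses to the closed-manifold identity, giving $(\ref{FVE})$.
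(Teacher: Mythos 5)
Your proposal follows essentially the same route as the paper: start from the integrated identity $(\ref{Variation2})$, substitute the standard variation formulae for $R$ and $\Delta$, integrate by parts over $M\setminus C_{\epsilon}(N)$, and kill every boundary term using the asymptotic orders $u=o(r^{-\frac{n-2}{2}})$, $|\nabla u|=o(r^{-\frac{n-2}{2}-1})$ from Corollary~$\ref{GeneralAsymptoticOrder}$. If anything, you are more explicit than the paper on the uniform order count over $\{r=\epsilon\}\times N$ and you correctly flag the one point the paper asserts without argument, namely that $\dot u$ and $\nabla\dot u$ obey the same decay as $u$ and $\nabla u$ so that $(\partial_{r}\dot f)e^{-f}=o(r^{-(n-1)})$.
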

\begin{cor}\label{CPOLF}
The critical points of $\lambda$-functional are Ricci-flat metrics with a single conical singularity at $p$.
\end{cor}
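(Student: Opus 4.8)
The plan is to read off the steady gradient Ricci soliton equation from the first variation formula in Proposition~\ref{FirstVariation}, and then to show that on a compact manifold with a single conical singularity this equation forces the metric to be Ricci flat. A metric $g$ is a critical point precisely when $\dot{\lambda}=0$ for every admissible variation $h=\dot{g}$. First I would restrict to variations $h$ compactly supported in the regular part $M\setminus\{p\}$; these are trivially admissible, since they do not touch the cone structure near $p$ (so $R_{h_0(t)}>(n-2)$ and $(\ref{AsymptoticOfMetric})$ hold automatically). For such $h$ the formula $(\ref{FVE})$ reads $\int_M\langle-Ric_g-Hess_g f,h\rangle_g\,e^{-f}d\vol_g=0$. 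Since $e^{-f}=u^2>0$ and smooth on $M\setminus\{p\}$, and $h$ ranges over all symmetric $2$-tensors supported away from $p$, the fundamental lemma of the calculus of variations yields the pointwise identity
\begin{equation*}
Ric_g+Hess_g f=0\qquad\text{on }M\setminus\{p\}.
\end{equation*}

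Next I would use the standard consequences of this steady soliton equation on the smooth part. Taking the trace gives $\Delta f=-R$, while the twice-contracted second Bianchi identity gives the soliton identity $R+|\nabla f|^2=C$ for some constant $C$ (constant because $M\setminus\{p\}$ is connected). Combining these,
\begin{equation*}
\Delta e^{-f}=e^{-f}\bigl(|\nabla f|^2-\Delta f\bigr)=e^{-f}\bigl(|\nabla f|^2+R\bigr)=C\,e^{-f},\qquad\text{i.e. }\Delta(u^2)=C\,u^2.
\end{equation*}
Integrating over $M\setminus C_\epsilon(N)$ and letting $\epsilon\to0$, the boundary integral $\int_{\partial C_\epsilon(N)}\partial_r(u^2)\,r^{n-1}\,d\vol_{h_\epsilon}$ vanishes in the limit: by Corollary~\ref{GeneralAsymptoticOrder} one has $u=o(r^{-\frac{n-2}{2}})$ and $|\nabla u|=o(r^{-\frac{n-2}{2}-1})$, hence $\partial_r(u^2)=2u\,\partial_r u=o(r^{-(n-1)})$ and $\partial_r(u^2)\,r^{n-1}=o(1)$ uniformly on the compact cross section $N$. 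Therefore $C\int_M u^2\,d\vol_g=0$, and since $u$ is normalized, $C=0$.

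Thus $u^2=e^{-f}$ is a nonnegative function that is harmonic on $M\setminus\{p\}$ and satisfies $u^2=o(r^{-(n-2)})$ near $p$. The concluding step is a removable-singularity argument. For the exact cone metric $dr^2+r^2h_0$ the radial homogeneous harmonic solutions are $1$ and $r^{2-n}$, so the fundamental solution of the Laplacian grows like $r^{2-n}$; the bound $u^2=o(r^{2-n})$ is strictly slower. By the Bôcher-type structure theory for nonnegative harmonic functions near an isolated singularity, the absence of an $r^{2-n}$ component means that $u^2$ extends to a harmonic function across $p$ on the closed manifold, and is therefore constant by the maximum principle. Consequently $\nabla f\equiv0$, and the soliton equation collapses to $Ric_g=0$ on $M\setminus\{p\}$, so $g$ is Ricci flat with a single conical singularity. (The converse is immediate: if $g$ is Ricci flat then $R=0$, the ground state $u$ is constant, so $f$ is constant and $Ric_g+Hess_g f=0$, forcing $\dot\lambda=0$.)

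The main obstacle is the passage across the singular point $p$. The vanishing of the boundary terms is already supplied by the asymptotic estimates of Corollary~\ref{GeneralAsymptoticOrder}, but establishing removability of the singularity for $u^2$ is delicate, because the metric is only asymptotically conical: one must compare the asymptotically conical Laplacian with the exact cone model and verify that the $o(r^{2-n})$ bound genuinely excludes a fundamental-solution component. Note that the naive alternative---rerunning the cut-off integration by parts with weight $u^2$ to show $\int|\nabla(u^2)|^2=0$---fails, since there the boundary term is only $o(r^{-(n-2)})$ and need not vanish; this is precisely why the removable-singularity (equivalently, zero-capacity of a point in dimension $\geq3$) argument seems unavoidable and deserves the most care.
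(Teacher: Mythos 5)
Your proposal is correct and follows the same route as the paper: restrict the first variation formula of Proposition \ref{FirstVariation} to obtain the steady soliton equation $Ric_{g}+Hess_{g}f=0$ on $M\setminus\{p\}$, use the asymptotics of Corollary \ref{GeneralAsymptoticOrder} to show $\int_{M}\Delta(e^{-f})\,d\vol_{g}=0$ (the boundary term is $o(r^{-(n-2)-1})\cdot r^{n-1}=o(1)$), and then run the compact steady-soliton rigidity argument. The one place you genuinely go beyond the paper is the last step: the paper simply says that the proof of Proposition 1.1.1 of \cite{CZ06} ``applies here,'' whereas that proof ends by concluding that the harmonic function $e^{-f}$ on a \emph{closed} manifold is constant --- a step that does not transfer verbatim across the singular point. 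You correctly isolate this as the delicate issue, note that the weighted integration by parts $\int|\nabla(u^{2})|^{2}$ fails because its boundary term is only $o(r^{-(n-2)})$, and instead close the argument with a B\^ocher/removable-singularity statement using $u^{2}=o(r^{2-n})$ against the $r^{2-n}$ growth of the fundamental solution on the model cone. This is a legitimate and arguably necessary completion of the argument the paper outsources to \cite{CZ06}; the only caveat is that the removable-singularity step for the asymptotically (rather than exactly) conical Laplacian is asserted rather than proved, which you acknowledge, and which one could alternatively handle with the weighted elliptic estimates of Lemma \ref{AsymptoticEllipticEstimate}.
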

\begin{proof}
By Proposition $\ref{FirstVariation}$, a critical point is a metric $g$ with a single conical singularity at $p$ satisfying
$$-Ric_{g}-Hess_{g}f=0.$$

Now
\begin{align*}
\int_{M\setminus C_{\epsilon}(N)}\Delta(e^{-f})d\vol_{g}
&= \int_{\partial C_{\epsilon}(N)}\partial_{r}(e^{-f})r^{n-1}d\vol_{h_{\epsilon}}\\
&= o(r^{-(n-2)-1+(n-1)})\\
&= o(1) \rightarrow0 \ \ \text{as} \ \ r\rightarrow0,
\end{align*}
i.e. $\int_{M}\Delta(e^{-f})d\vol_{g}=0$. Therefore, the proof of Proposition 1.1.1 in \cite{CZ06} applies here.
\end{proof}

\begin{remark}
In the proof of Corollary $\ref{CPOLF}$, the asymptotic behavior of the function $e^{-f}$ near the singularity plays a crucial role. In general, in the conically singular compact case, the steady Ricci soliton equation $Ric_{g}+Hess_{g}f=$ does not imply that Ricci curvature vanishes if we do not impose any asymptotic assumption for potential function $f$ near the singularities. This is different from the smooth compact case.
\end{remark}

Another application will be the monotonicity under the Ricci flow. Here we consider the Ricci flow preserving the conical singularity. More precisely, $g(t)$, for $t\in [0, T)$, is a solution of Ricci flow equation $\frac{\partial}{\partial t}g=-2Ric_{g}$ on smooth manifold $M\setminus\{p\}$, and $(M, g(t), p)$ is a Riemannian manifold with a single conical singularity at $p$ for each $t\in [0, T)$. This kind of Ricci flow has been studied by Yin\cite{Yin10}\cite{Yin13} and Mazzeo, Rubinstein, and Sesum\cite{MRS15} for surfaces, and by Vertman \cite{Ver16} for higher dimensions case. Actually, in \cite{Ver16}, Vertman proved the short time existence for the Ricci flow within a class of incomplete edge manifolds, which includes the case of manifolds with isolated conical singularities.

As usual, twice the gradient flow of the $\lambda$-functional 
$$ \frac{\partial}{\partial t} g= -2(Ric_{g}+Hess_{g}f) $$
is equivalent to the Ricci flow. Therefore, we have the following monotoncity propety for the $\lambda$-functional $\lambda(g)$ under the Ricci flow preserving the conical singularities.

\begin{thm}\label{MonotonicityUnderRicciFlow}
	  $\lambda(g)$ is monotonically nondecreasing under the Ricci flow within the class of compact Riemannian manifolds with isolated conical singularities satisfying $R_{h_{0}}>(n-2)$ on each cross section and the asymptotic condition $(\ref{AsymptoticOfMetric})$ near each singular point. Moreover, the monotonicity is strict unless the flow is a steady gradient Ricci soliton with isolated conical singularities satisfying $R_{h_{0}}>(n-2)$ and the asymptotic condition $(\ref{AsymptoticOfMetric})$.
		\end{thm}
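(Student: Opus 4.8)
The plan is to read off the monotonicity directly from the first variation formula in Proposition \ref{FirstVariation} combined with the diffeomorphism invariance of $\lambda$. First I would record that a Ricci flow $\frac{\partial}{\partial t}g=-2Ric_{g}$ preserving the conical singularities stays within the admissible class: short-time existence is supplied by Vertman \cite{Ver16}, and one checks (or incorporates into the hypotheses) that $R_{h_{0}(t)}>(n-2)$ on each cross section and the asymptotic condition (\ref{AsymptoticOfMetric}) persist, so that Theorem \ref{SpectrumProperty} applies at each time and $\lambda(g(t))$ is well defined as the first eigenvalue. Since this eigenvalue is simple and isolated (by the unnumbered Corollary following the Courant nodal domain theorem) and depends on $g$ through the weighted Sobolev framework of \S 3, analytic perturbation theory gives that $\lambda(g(t))$ and its normalized positive eigenfunction $u(t)=e^{-f(t)/2}$ are differentiable in $t$, which is exactly what is needed to apply Proposition \ref{FirstVariation} along the flow.

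The key observation is that $\lambda$ is invariant under diffeomorphisms preserving the conical structure, and that the gradient flow $\frac{\partial}{\partial t}g=-2(Ric_{g}+Hess_{g}f)$ differs from the Ricci flow precisely by the family of such diffeomorphisms generated by $\nabla f$, since $\mathcal{L}_{\nabla f}g=2Hess_{g}f$. Thus $\frac{d}{dt}\lambda(g(t))$ may be computed along the gradient flow, i.e. by inserting $h=-2(Ric_{g}+Hess_{g}f)$ into (\ref{FVE}), giving
\begin{equation*}
\frac{d}{dt}\lambda(g(t))=\int_{M}\langle -Ric_{g}-Hess_{g}f,-2(Ric_{g}+Hess_{g}f)\rangle_{g}\,e^{-f}d\vol_{g}=2\int_{M}|Ric_{g}+Hess_{g}f|^{2}_{g}\,e^{-f}d\vol_{g}\geq 0,
\end{equation*}
which is manifestly nonnegative and yields the monotonicity. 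For strictness, note that $e^{-f}>0$ on $M\setminus\{p\}$, so $\frac{d}{dt}\lambda=0$ at some time forces $Ric_{g}+Hess_{g}f\equiv 0$ there; if this persists for all $t$ the flow is a steady gradient Ricci soliton with the stated conical structure.

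The main obstacle is justifying the passage from $h=-2Ric_{g}$ to $h=-2(Ric_{g}+Hess_{g}f)$ in the singular setting, because the generating field $\nabla f$ is itself singular at $p$: from the eigenfunction asymptotics $u\sim r^{s}$ one has $f\sim -2s\ln r$ and hence $|\nabla f|=O(r^{-1})$ near $p$, so the flow of $\nabla f$ cannot be integrated naively. I would therefore verify the invariance infinitesimally, which amounts to the identity $\int_{M}\langle Ric_{g}+Hess_{g}f,Hess_{g}f\rangle_{g}\,e^{-f}d\vol_{g}=0$. This reduces, after weighted integration by parts, to the contracted second Bianchi identity for the Bakry--\'Emery tensor together with the constancy coming from the eigenvalue equation (\ref{EigenequationNew}); the delicate point is that the integration by parts produces boundary terms on $\partial C_{\epsilon}(N)$, and these must be shown to vanish as $\epsilon\to 0$. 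The asymptotic order estimates of Corollary \ref{GeneralAsymptoticOrder}, namely $|\nabla^{i}u|=o(r^{-\frac{n-2}{2}-i})$ for $i=0,1$, are precisely what force these contributions to vanish, exactly as in the derivation of Proposition \ref{FirstVariation} and of Corollary \ref{CPOLF}. Establishing that no hidden boundary term survives, and that the minimizer $f(t)$ retains this asymptotic control uniformly along the flow, is the substantive part; once it is in place, nonnegativity and the equality characterization follow immediately.
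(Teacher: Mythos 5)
Your proposal takes essentially the same route as the paper: the paper's entire argument is the remark preceding the theorem that twice the gradient flow $\frac{\partial}{\partial t}g=-2(Ric_{g}+Hess_{g}f)$ is equivalent to the Ricci flow up to diffeomorphisms, so that inserting $h=-2(Ric_{g}+Hess_{g}f)$ into Proposition \ref{FirstVariation} gives $\dot{\lambda}=2\int_{M}|Ric_{g}+Hess_{g}f|^{2}_{g}e^{-f}d\vol_{g}\geq0$, with equality precisely on steady gradient solitons. Your additional point --- that the diffeomorphism equivalence needs separate justification because $\nabla f$ is singular at $p$, to be handled infinitesimally with boundary terms killed by the asymptotics of Corollary \ref{GeneralAsymptoticOrder} --- is a genuine subtlety the paper passes over in silence, and your treatment of it is the correct supplement rather than a departure from the paper's method.
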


Then this monotonicity property directly implies a no steady or expanding breather result.

\begin{defn}
A Ricci flow $(M, g(t))$ preseving the conical singularities at $p$ is called a breather preserving conical singularities if for some $t_{1}<t_{2}$ and $\alpha>0$ the metrics $\alpha g(g_{1})$ and $g(t_{2})$ only differ by a diffeomorphism; the cases $\alpha=1$, $\alpha<1$, and $\alpha>1$ correspond to steady, shrinking, and expanding breathers preserving conical singularities, respectively.
\end{defn}

In \cite{Per02}, Perelman proved that a breather on smooth compact manifold is a gradient Ricci soliton. Now we can extend Perelman's no breather theorem in steady and expanding cases to compact manifolds with isolated conical singularities as follows.

\begin{cor}\label{NoBreathers}
\mbox{}\par
\begin{enumerate}
\item A steady breather preserving conical singularity within the class as in Theorem  $\ref{MonotonicityUnderRicciFlow}$ is necessarily a steady Ricci gradient soliton preserving conical singularity.\\
\item $\lambda(g)V^{\frac{2}{n}}(g)$ is monotonically nondecreasing under the Ricci flow within the classes as in Theorem  $\ref{MonotonicityUnderRicciFlow}$ whenever it is nonpositive. Moreover, the monotonicity is strict unless we are on a gradient Ricci soliton preserving conical singularities within the classes as in Theorem $\ref{MonotonicityUnderRicciFlow}$.\\
\item  An expanding breather preserving conical singularity within the class as in Theorem  $\ref{MonotonicityUnderRicciFlow}$ is necessarily an expanding gradient Ricci soliton preserving conical singularity.\\
\end{enumerate}
\end{cor}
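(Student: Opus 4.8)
The plan is to follow Perelman's original no-breather argument verbatim in spirit, the only genuinely new input being that every integration by parts performed at the conical tip produces no boundary term, which is exactly what the decay estimate $|\nabla^i u| = o(r^{-(n-2)/2 - i})$ of Corollary~\ref{GeneralAsymptoticOrder} (equivalently Theorem~\ref{GeneralAsymptoticOrderOnCone}) supplies. First I would record the two scaling identities that drive everything: since $-4\Delta_{cg} + R_{cg} = c^{-1}(-4\Delta_g + R_g)$ and $V(cg) = c^{n/2} V(g)$, one has $\lambda(cg) = c^{-1}\lambda(g)$, so that $\lambda(g)$ scales like a curvature while $\lambda(g) V^{2/n}(g)$ is scale invariant; both quantities are also diffeomorphism invariant. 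Part (1) is then immediate: a steady breather has $g(t_1)$ and $g(t_2)$ differing by a diffeomorphism, so $\lambda(g(t_1)) = \lambda(g(t_2))$, while Theorem~\ref{MonotonicityUnderRicciFlow} forces $\lambda$ to be nondecreasing; hence $\lambda$ is constant on $[t_1,t_2]$, and the strict part of that theorem forces the flow to be a steady gradient Ricci soliton preserving the conical singularity.

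For part (2) I would compute the time derivative of $\lambda V^{2/n}$ along the flow, using Proposition~\ref{FirstVariation} with $h = \dot g = -2(\mathrm{Ric} + \mathrm{Hess}\,f)$ together with the pointwise trace inequality $|A|^2 \ge \tfrac1n(\mathrm{tr}\,A)^2$ and Jensen's inequality for the probability measure $e^{-f}\,dV$:
\[
\dot\lambda = 2\int_M |\mathrm{Ric} + \mathrm{Hess}\,f|^2 e^{-f}\,dV \;\ge\; \frac{2}{n}\int_M (R + \Delta f)^2 e^{-f}\,dV \;\ge\; \frac{2}{n}\lambda^2,
\]
where the last step uses $\int_M (R + \Delta f)e^{-f}\,dV = \lambda$, which in turn relies on the boundary-free identity $\int_M \Delta f\, e^{-f}\,dV = \int_M |\nabla f|^2 e^{-f}\,dV$ justified by the asymptotics. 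Testing $\mathcal{F}$ against the constant $u = V^{-1/2}$ — which lies in $H^1(M)$ precisely because $n \ge 3$ makes $\int_M r^{-2}\,dV$ finite — gives $\lambda \le \bar R := V^{-1}\int_M R\,dV$, hence $\dot V = -\int_M R\,dV \le -\lambda V$. When $\lambda \le 0$ these combine (multiplying $\dot V/V \le -\lambda$ by the nonpositive factor $2\lambda/n$) to yield
\[
\frac{d}{dt}\bigl(\lambda V^{2/n}\bigr) = V^{2/n}\Bigl(\dot\lambda + \tfrac{2\lambda}{n}\tfrac{\dot V}{V}\Bigr) \ge V^{2/n}\Bigl(\tfrac{2}{n}\lambda^2 - \tfrac{2}{n}\lambda^2\Bigr) = 0,
\]
with equality forcing both the trace/Jensen equalities (i.e. $\mathrm{Ric} + \mathrm{Hess}\,f = \tfrac1n(R+\Delta f)g$ with $R + \Delta f$ constant, a gradient soliton) and $\bar R = \lambda$.

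For part (3), an expanding breather satisfies $g(t_2) = \alpha\,\phi^* g(t_1)$ with $\alpha > 1$, so scale and diffeomorphism invariance give $(\lambda V^{2/n})(g(t_1)) = (\lambda V^{2/n})(g(t_2))$, while $\lambda(g(t_2)) = \alpha^{-1}\lambda(g(t_1))$. Combining this with monotonicity $\lambda(g(t_1)) \le \lambda(g(t_2))$ forces $\lambda(g(t_1)) \le 0$: the case $\lambda(g(t_1)) > 0$ contradicts $\alpha > 1$ directly, and the case $\lambda(g(t_1)) = 0$ would make the flow a steady soliton (constant volume), contradicting the strict volume increase imposed by $\alpha > 1$. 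Thus $\lambda \le 0$ on $[t_1,t_2]$, so by part (2) the scale-invariant quantity $\lambda V^{2/n}$ is nondecreasing; being equal at the two endpoints it is constant, the equality case produces a gradient Ricci soliton, and $\alpha > 1$ identifies it as expanding.

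The main obstacle is not the algebra, which is classical (cf. \cite{Per02} and the proof of Proposition~1.1.1 in \cite{CZ06}), but verifying that the singular setting does not break any of these steps: one must confirm that each integration by parts is boundary-term-free at $p$ and that the constant test function $V^{-1/2}$ is genuinely admissible in $H^1(M)$, both of which hinge on the decay of Corollary~\ref{GeneralAsymptoticOrder} and on $n \ge 3$. A secondary subtlety is the rigidity analysis: one must check that the equality cases of the trace and Jensen inequalities, which a priori hold only on $M \setminus \{p\}$, still force the global soliton structure across the singularity, and that the resulting potential $f$ inherits the asymptotics required to keep the limiting soliton inside the prescribed class of conically singular metrics satisfying $R_{h_0} > (n-2)$ and $(\ref{AsymptoticOfMetric})$.
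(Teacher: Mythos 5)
Your proposal is correct and follows essentially the same route as the paper, which simply remarks that once the first variation formula (Proposition \ref{FirstVariation}) and the monotonicity of Theorem \ref{MonotonicityUnderRicciFlow} are in hand, the argument is identical to Perelman's smooth compact case. You have in fact supplied more detail than the paper does, and you correctly isolate the only genuinely new inputs needed in the singular setting: the vanishing of boundary terms at the tip via the decay $|\nabla^i u|=o(r^{-\frac{n-2}{2}-i})$ of Corollary \ref{GeneralAsymptoticOrder}, and the admissibility of the constant test function in $H^{1}(M)$ for $n\geq 3$.
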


Once we use the first variation formula in Proposition $\ref{FirstVariation}$, the proof of Corollary $\ref{NoBreathers}$ is the same as smooth compact case (see, \cite{Per02}).

Finally we end with the second variation which is important in studying the stability issue.
\begin{prop}\label{SVF}
At a critical point, i.e. a Ricci-flat metric $g$ with a single conical singularity, the second variation formula is given by
\begin{equation}
\ddot{\lambda}=\int_{M}\langle -\frac{1}{2}\Delta_{L,g}h+\delta_{g}^{*}\delta_{g} h+\frac{1}{2}Hess_{g}(\nu_{h}),h\rangle_{g} e^{-f}d\vol_{g},
\end{equation}
where $\Delta_{g}\nu_{h}=-\delta_{g}(\delta_{g} h)$.
\end{prop}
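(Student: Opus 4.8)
The plan is to reduce the computation to the smooth compact case and then control the boundary terms that the conical singularity introduces. First I would record the structure of the critical point: since $g$ is Ricci-flat its scalar curvature vanishes, so the first eigenvalue is $\lambda=0$ with constant normalized eigenfunction $u=e^{-f/2}$, whence $f$ is constant and $\nabla f\equiv 0$. Because the first eigenvalue is simple (the Corollary following Theorem \ref{Min-maxPrinciple}), the standard perturbation argument as in \cite{DWW05} gives smooth dependence of $\lambda(g(t))$ and of the normalized eigenfunction $u(t)$, hence of $f(t)$, on $t$, so I may differentiate freely. I would fix the variation by $\dot g=h$ (with $\ddot g=0$) and differentiate the first variation formula of Proposition \ref{FirstVariation} once more in $t$. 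The key simplification is that at $t=0$ the integrand factor $-Ric_{g}-Hess_{g}f$ vanishes identically; consequently, when the $t$-derivative is distributed over the product, every term in which it falls on $e^{-f}$, on $h$, or on the volume form is killed, and only
\begin{equation*}
\ddot{\lambda}=\int_{M}\langle -\dot{Ric}-\tfrac{d}{dt}\big(Hess_{g(t)}f(t)\big)\big|_{t=0},\,h\rangle_{g}\,e^{-f}\,d\vol_{g}
\end{equation*}
survives.

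Next I would evaluate the two variational terms. The linearization of the Ricci tensor at a Ricci-flat metric is $\dot{Ric}=\tfrac12\Delta_{L,g}h-\delta_{g}^{*}\delta_{g}h-\tfrac12 Hess_{g}(\mathrm{tr}_{g}h)$ (up to the sign conventions for $\delta$), the Ricci correction terms in $\Delta_{L}$ dropping out. Since $\nabla f\equiv0$ at $t=0$, the variation of the connection contributes nothing to $\tfrac{d}{dt}Hess_{g(t)}f(t)$, which therefore reduces to $Hess_{g}(\dot f)$. To identify $\dot f$ I would differentiate the scalar relation $(\ref{EigenequationNew})$, $\lambda=2\Delta f-|\nabla f|^{2}+R$; using $\lambda=0$, $\nabla f\equiv0$ and $\dot\lambda=0$ at the critical point this collapses to $\Delta_{g}\dot f=-\tfrac12\dot R$. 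Comparing with the defining equation $\Delta_{g}\nu_{h}=-\delta_{g}(\delta_{g}h)$ and the standard expression for $\dot R$ at a Ricci-flat metric, $\dot f$ is determined up to an additive constant as a combination of $\nu_{h}$ and $\mathrm{tr}_{g}h$. Substituting these and collecting terms, the $Hess_{g}(\mathrm{tr}_{g}h)$ contributions cancel and one is left with exactly $-\tfrac12\Delta_{L,g}h+\delta_{g}^{*}\delta_{g}h+\tfrac12 Hess_{g}(\nu_{h})$, the claimed integrand.

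The hard part will be justifying, in the conically singular setting, every integration by parts used above, since each may leave a boundary term over $\partial C_{\epsilon}(N)$ in the limit $\epsilon\to 0$. These arise both in passing from $(\ref{Variation1})$--$(\ref{Variation2})$ to the pointwise relation for $\dot f$ and in the divergence-theorem steps hidden in the linearized-Ricci and $\dot R$ identities. I would control them exactly as in the proofs of Proposition \ref{FirstVariation} and Corollary \ref{CPOLF}: the estimates $|\nabla^{i}u|=o(r^{-\frac{n-2}{2}-i})$ of Corollary \ref{GeneralAsymptoticOrder}, together with the analogous estimates for the first-order variations $\dot u$ (hence $\dot f$) obtained by applying the weighted-space and elliptic-bootstrap machinery of $\S 8$ to the differentiated eigenvalue equation, force each boundary integral to be $o(1)$ as $\epsilon\to0$. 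One further point to settle is the existence and decay of the auxiliary function $\nu_{h}$ solving $\Delta_{g}\nu_{h}=-\delta_{g}(\delta_{g}h)$ on $M\setminus\{p\}$; this I would obtain from the spectral theory of the Laplacian (the $R\equiv 0$ instance of Theorem \ref{SpectrumProperty}, with the solvability condition supplied by the vanishing boundary terms) and the asymptotic-expansion results of $\S 7$, which also furnish the decay needed to discard the remaining boundary contributions. Once all boundary terms are shown to vanish, the computation is formally identical to the smooth compact case (compare \cite{CHI04} and \cite{CZ12}), and the formula follows.
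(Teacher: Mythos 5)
Your proposal is correct and follows essentially the same route as the paper: differentiate the first variation formula at the critical point where $-Ric_{g}-Hess_{g}f\equiv 0$ so that only the derivative of the gradient term survives, substitute the linearized Ricci tensor, and identify $\nu_{h}=\mathrm{tr}_{g}h-2\dot f$ from the differentiated eigenvalue equation $(\ref{Variation1})$ together with the variation of the scalar curvature. The only difference is that you spell out the vanishing of the boundary terms and the solvability of the Poisson equation for $\nu_{h}$, which the paper leaves implicit.
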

As we have seen in Corollary $\ref{CPOLF}$, at a critical point $g$ of the $\lambda$-functional, $Ric_{g}\equiv0$ and $f$ is a constant function. Thus for getting the second variation formula at a critical point, we only need to take derivative for the gradient, $-Ric-Hessf$, in the integral $(\ref{FVE})$, and then plug the variation fomula of Ricci curvature (see, e.g. 1.174 in \cite{Bes87}, or \cite{DWW05}) into the integrand. And the equation $(\ref{Variation1})$ and the variation formula of the scalar curvature $R$ imply the Possion equation characterization of $\nu_{h}=tr_{g}h-2\dot{f}$ given at the end of Proposition $\ref{SVF}$.
\end{section}



\end{document}